\documentclass{amsart}
\usepackage[margin=1.46in]{geometry}
\newtheorem{theorem}{Theorem}[section]
\newtheorem{lemma}[theorem]{Lemma}
\newtheorem{proposition}[theorem]{Proposition}

\theoremstyle{definition}
\newtheorem{definition}[theorem]{Definition}

\theoremstyle{remark}
\newtheorem{remark}[theorem]{Remark}

\numberwithin{equation}{section}

%    Absolute value notation

%    Blank box placeholder for figures (to avoid requiring any
%    particular graphics capabilities for printing this document).

\begin{document}

\title[rotational local Maxwellian]{The Boltzmann equation near a rotational local Maxwellian}
%\author{Yan Guo}
%\address{Department of Applied Mathematics, Brown University, Providence, RI 02912}
%\email{Guo$\_$Yan@brown.edu}
%\thanks{Support information for the second author.}

\author{Chanwoo Kim}
%    Address of record for the research reported here
\address{Department of Mathematics, Brown University, Providence, RI 02917, USA}
%    Current address
%\curraddr{Department of Mathematics and Statistics,
%Case Western Reserve University, Cleveland, Ohio 43403}
\email{ckim@math.brown.edu, ckim.pde@gmail.com}
%    \thanks will become a 1st page footnote.
%\thanks{The first author was supported in part by FRG Grant \#000000.}

\author{Seok-Bae Yun}
%    Address of record for the research reported here
\address{Division of Applied Mathematics, Brown University, Providence, RI, 02812, USA}
\email{sbyun01@gmail.com, seokbae$\_$yun@brown.edu}
%    \thanks will become a 1st page footnote.
%\thanks{The first author was supported in part by NSF Grant \#000000.}
%    General info
\subjclass[2000]{Primary 35Q20, 82C40, 35M13}

%\date{January 1, 2001 and, in revised form, June 22, 2001.}

%\dedicatory{This paper is dedicated to our advisors.}

\keywords{Kinetic theory, Boltzmann equation, Rotational local Maxwellian, specular reflection boundary condition}

\begin{abstract}
In rotationally symmetric domains, the Boltzmann equation with specular reflection boundary condition has a
special type of equilibrium states called the rotational local Maxwellian which, unlike the uniform Maxwellian,
has an additional term related to the angular momentum of the gas. In this paper, we consider the initial boundary
value problem of the Boltzmann equation near the rotational local Maxwellian. Based on the $L^2$-$L^{\infty}$ framework
of \cite{GuoConti}, we establish the global well-posedness and the convergence toward such equilibrium states.
\end{abstract}

\maketitle

%%%%%%%%%%%%%%%%%%%%%%%%%%%%%%%%%%%%%%%%%%%%%%%%%%%%%%%%%%%%%%%%%%%%%%%%%%%%%%%%%%%%%%%%%%%%%%%%%%%%%%%%%%%%%%%%
% %%%%%%%%%%%%%%%%%%%%%%%%%%%%%%%%%%%%%%%%%%%%%%%%%%%%%%%%%%%%%%%%%%%%%%%%%%%%%%%%%%%%%%%%%%%%%%%%%%%%%%%%%%%%%%
% %
% %
% %                     Introduction
% %
% %
% %%%%%%%%%%%%%%%%%%%%%%%%%%%%%%%%%%%%%%%%%%%%%%%%%%%%%%%%%%%%%%%%%%%%%%%%%%%%%%%%%%%%%%%%%%%%%%%%%%%%%%%%%%%%%%
%%%%%%%%%%%%%%%%%%%%%%%%%%%%%%%%%%%%%%%%%%%%%%%%%%%%%%%%%%%%%%%%%%%%%%%%%%%%%%%%%%%%%%%%%%%%%%%%%%%%%%%%%%%%%%%%
\section{Introduction}
At the kinetic level, the dynamics of a non-ionized monatomic rarefied gase  is governed by the celebrated Boltzmann equation:
\begin{align}
\begin{aligned}\label{Boltzmann Equation}
\partial_t F+v\cdot\nabla_x F&=Q(F,F).
\end{aligned}
\end{align}
Here $F(x,v,t)$ denotes the number density  of particles at $(x,v)\in \Omega\times \mathbb{R}^3$ in the phase space at time $t$, and
$\Omega$ denotes a bounded open subset of $\mathbb{R}^3$.% whose geometric and analytic conditions will be specified later.
The Boltzmann equation describes the evolution of $F$ as a combination of the free transport of particles and the binary collisions.
The l.h.s of (\ref{Boltzmann Equation}) represent the free transport of particles in the absence of collisions while the collision process
is encoded in the collision operator $Q$, which takes the following explicit form:
\begin{eqnarray*}\label{Collision Op}
Q(F_1,F_2)&=&\int_{\mathbb{R}^3}\int_{\mathbb{S}^2}B(v-u,\omega)\big\{F_1(v^{\prime})F_2(u^{\prime})-F_1(v)F_2(u)\big\}d\omega du\cr
&=&Q_{gain}(F_1,F_2)-Q_{loss}(F_1,F_2).
\end{eqnarray*}
$(u,v)$ and  $(u^{\prime},v^{\prime})$ denotes the pre-collisional velocities and the post-collisional velocities respectively and
the microscopic conservation laws lead to the following relations between  $(v,u)$ and $(v^{\prime},u^{\prime})$,
with a free parameter $\omega\in \mathbb{S}^2$
\begin{eqnarray*}
u^{\prime}=u+[(v-u)\cdot \omega]\omega,\qquad v^{\prime}=v-[(v-u)\cdot \omega]\omega.
\end{eqnarray*}
%We assume that the intermolecular force is given by the inverse power law potential for hard interaction:
We assume that the Boltzmann collision kernel takes the product form as
\begin{eqnarray*}
B(v-u,\omega)=|v-u|^{\gamma}q_0(\theta),
\end{eqnarray*}
where the intermolecular potential is the hard potential $(0\leq \gamma\leq 1)$
and the collision cross section $q_0$ satisfies the Grad cut-off assumption:
\begin{eqnarray*}
0\leq q_0(\theta)\leq C|\cos\theta|,\qquad \cos\theta=\frac{(u-v)\cdot\omega}{|u-v|}.
\end{eqnarray*}
%The collision operator $Q$ satisfies the following symmetry property:
%\begin{eqnarray*}\label{Symmetry}&&\int_{\mathbb{R}^3}Q(F,F)(v)\phi(v) dv\cr&&\qquad=\int_{\mathbb{R}^6} B(v-u,\omega)\big\{F(v^{\prime})F(u^{\prime}_*)-F(v)F(u)\big\}\big\{\phi(v^{\prime})+\phi(v_*^{\prime})-\phi(v)-\phi(v_*)\big\}dvdv_*,
%\end{eqnarray*}
%which implies the conservation of mass, linear momentum, energy for $\phi(v)=1,v,|v|^2$ respectively:
%\begin{eqnarray}\label{ConservationLaws}\begin{split}&\frac{d}{dt}\int_{\Omega\times \mathbb{R}^3}Fdv=
%\frac{d}{dt}\int_{\Omega\times \mathbb{R}^3} vFdv =\frac{d}{dt}\int_{\Omega\times \mathbb{R}^3} F|v|^2dv=0,\end{split}\end{eqnarray}
%Here the linear momentum should be replace by the angular momentum when the domain is rotationally symmetric:
%\begin{eqnarray*}\frac{d}{dt}\int_{\Omega\times \mathbb{R}^3}x\times v Fdv=0.\end{eqnarray*}
%With the choice of $\phi(v)=\log f(v)$, (\ref{Symmetry}) also gives the entropy dissipation:
%:\begin{eqnarray*}\label{Entropy}
%\frac{d}{dt}\int_{\mathbb{R}^3}f\log f dv=\int_{\mathbb{R}^3}Q(f,f)\log fdv\leq 0.
%\end{eqnarray*}
In this paper, we are interested in the behavior of a rarefied gaseous system contained in a rotationally symmetric domain
with specular reflection boundary condition. Due to the fact that the specular reflection,  unlike the other boundary conditions, preserves the angular
momentum of the particle system, there exists a special type of equilibrium state called the rotational local Maxwellian, which reflect the rotational tendency
of the gases. After normalization, it takes the following form:
\begin{eqnarray}\label{RLM}
\mu_{\varpi}(v,x)=\frac{1}{\sqrt{(2\pi)^3}}\exp\Big(-{\frac{1}{2}}|v|^2+\varpi\times v\cdot x\Big),
\end{eqnarray}
where $\varpi$ denotes a vector in $\mathbb{R}^3$ related to the angular momentum of the system and the symmetry of the domain.
In this paper, we establish the global well-posedness of the initial boundary value problem and prove the convergence toward the
rotational local Maxwellian when the initial date is a small perturbation of the rotational local Maxwellian.
\newline
\indent The studies of the initial boundary value problem of the Boltzmann equation have rather short history, due mainly to
difficulties arising in understanding the complicated interaction
between the particles and the boundary, and its ongoing influence on the evolution of the statistical distribution of the gas.
Up to now, roughly two types of theories are available for the initial boundary value problem of the Boltzmann equation.
The first one is the theory of renormalized solutions by Diperna and Lions introduced in their seminar papers \cite{D-L1,D-L2}.
The extension of this result to the initial boundary value problem was carried out in \cite{Ham}. (See also \cite{A-C}).
The advantage of this approach is that the problem can be considered under the most general conditions, namely,
the finite mass, energy and entropy of the initial distribution. However, the uniqueness is not
guaranteed and remains as one of the most prominent problems in the mathematical kinetic theory.\newline
\indent On the other hand, the semi-group approaches combined with either energy estimate or spectral analysis are also available.
The spectral analysis of the linearized collision operator was initiated by Ukai in his pioneering work \cite{Ukai0}
and its application for the initial boundary value problem can be found in \cite{U-A}, where the external domain problem was considered
for specular reflection boundary conditions.
The diffusion reflection boundary condition in a bounded domain was studied in \cite{Guiraud}.\newline
%It is claimed in \cite{S-A} that the case of the specular reflection boundary condition in a convex regular domain was resolved but the proof was
%not published.\newline
\indent Liu and Yu developed the Green function approach in a series of papers \cite{L-Yu0,L-Yu1,L-Yu2,L-Yu3}. This approach enables one
to obtain the pointwise estimates of the Green function of the linearized Boltzmann equation and get detailed
information on how various types of fluid-kinetic waves propagate. For this, they introduced two types of decompositions, namely,
 the long wave-short wave decomposition and the particle-wave decomposition, which are analyzed separately and  combined using
the mixture lemma.\newline
\indent Recently, a novel $L^2$-$L^{\infty}$ framework has been developed by Guo \cite{GuoConti}. The name is self-descriptive: the coercive property of the
linearized collision operator is captured in $L^2$ space, whereas the weighted $L^{\infty}$ estimate is derived
by careful analysis of the iterated Duhamel formula \cite{Vidav} to control the bilinear
perturbation. This approach is robust in that all the four most widely used boundary conditions, namely, inflow, bounce-back,
specular reflection and diffusive boundary conditions
can be treated in a unified framework. It was then employed by the first author to study the formulation and propagation of singularity
for the initial boundary value problem for the Boltzmann equation \cite{Kim}.\newline
\indent Our work is based on the $L^2$-$L^{\infty}$ framework. There are several key difficulties.
First, since the linearized collision operator is dependent on spatial variable, it is not clear whether
the coercivity property holds uniformly in $x$, which is crucial to obtain the $L^2$ decay estimate.
It was resolved affirmatively in Lemma \ref{positivity}, and then the conservation law of angular momentum combined with the hyperbolic-type energy
method developed in \cite{GuoConti} gives the desired $L^2$ estimate.
Secondly, due to the complicated form of the compact operator and the multiplicative operator for the rotational local Maxwellian,
we should be more careful in studying the velocity decomposition for $L^{\infty}$ estimate.
The key observation is that in small velocity regime, where all the microscopic velocities involved are small, the effect of rotation is
negligible, and the estimate of the most difficult part can be treated similarly as in the case of the uniform Maxwellian.\newline
\indent Brief overview of the initial boundary value problems for other types of kinetic equations is in order. The initial boundary value problem of the
kinetic transport equation was considered in \cite{B-P}. Guo considered the half space problem of the Vlasov Maxwell system in \cite{GuoSing, GuoReg}.
Recently, the specular reflection boundary value problem was resolved by Hwang et al. \cite{Hwang,H-V}.
Similar problem for the Vlasov Poisson equation was studied in the framework of renormalzed solutions
was studied in \cite{Mis}. We will not attempt to present a complete set of references for the mathematical theory of the Boltzmann equation.
We refer the interested readers to \cite{Cer1,C-I-P,Glassy,Sone0,Sone,Ukai0,Villani} for nice overview
of mathematical and physical theory of kinetic equations.\newline
%Our main frame work is the Guo's frame work. Some explanations are in order.\newline
\indent This paper is organized as follows. In section 1, we consider the basic formulation of the problem.
In section 2. we prove preliminary lemmas which plays important
roles in the later sections. The section 3 is devoted to $L^2$ estimate of the linearized Boltzmann equation capturing the
coercivity of the linearized collision operator.
In section 4, using the iterated Duhamel formula, we obtain weighted $L^{\infty}$ estimates which are crucial to control the nonlinear terms.
Finally, the main result is proved in section 5. In appendix, we study how the rotational local Maxwellians are derived.
%%%%%%%%%%%%%%%%%%%%%%%%%%%%%%%%%%%%%%%%%%%%%%%%%%%%%%%%%%%%%%%%%%%%%%%%%%%%%%%%%%%%%%%%%%%%%%%%%%%%%%%%%%%%%%%%
% %%%%%%%%%%%%%%%%%%%%%%%%%%%%%%%%%%%%%%%%%%%%%%%%%%%%%%%%%%%%%%%%%%%%%%%%%%%%%%%%%%%%%%%%%%%%%%%%%%%%%%%%%%%%
% %
% %
% %                     subsection: Domain and Characteristics
% %
% %
% %%%%%%%%%%%%%%%%%%%%%%%%%%%%%%%%%%%%%%%%%%%%%%%%%%%%%%%%%%%%%%%%%%%%%%%%%%%%%%%%%%%%%%%%%%%%%%%%%%%%%%%%%%%%%%
%%%%%%%%%%%%%%%%%%%%%%%%%%%%%%%%%%%%%%%%%%%%%%%%%%%%%%%%%%%%%%%%%%%%%%%%%%%%%%%%%%%%%%%%%%%%%%%%%%%%%%%%%%%%%%%%
\subsection{Domain and characteristics}
Let $\Omega$ be a connected and bounded domain.
We assume that there exists a smooth function $\xi(x)$ such that $\Omega=\{x: \xi(x)<0\}$. We further assume $\nabla\xi(x)\neq 0$ at the
boundary $\partial\Omega=\{ x~:~ \xi(x)=0\}$. The outward normal vector at $\partial\Omega$ is given by
\begin{eqnarray*}
n(x)=\frac{\nabla\xi(x)}{|\nabla\xi(x)|}.
\end{eqnarray*}
We say $\Omega$ is real analytic if $\xi$ is real analytic in $x$. We define $\Omega$ as strictly convex if there exists $C_{\xi}>0$
such that
\begin{eqnarray*}
\partial_{ij}\xi(x)\zeta^i\zeta^j\geq C_{\xi}|\zeta|^2
\end{eqnarray*}
for all $x\in \mathbb{R}^3$ such that $\xi(x)\leq 0$, and $\zeta\in \mathbb{R}^3$.
We say that $\Omega$ is rotationally symmetric around $e_3=(0,0,1)$ if for all $x\in \partial\Omega$
\begin{eqnarray}\label{cylindricallySymm}
x\times e_3\cdot n(x)=0.\cr
\end{eqnarray}
{\bf Geometric assumption $\mathcal{(A)}$}:
Throughout this paper, we assume that $\Omega$ satisfies the following geometric conditions:
%\begin{description}\label{GeometricAssumptions}
%\item {\bf${\bf{\mathcal{(A)}}}$ $\Omega$ is strictly convex, analytic and rotationally symmetric around ${\bf e_3}$.}\newline
%\end{description}
\begin{center}\label{GeometricAssumptions}
\item {\bf${\bf{\mathcal{(A)}}}:$ $\Omega$ is strictly convex, analytic and rotationally symmetric around ${\bf e_3}$.}\newline
\end{center}
%{\bf$\Omega$ is strictly convex, analytic and rotationally symmetric around $e_3$.}\newline
We denote the phase boundary in the space $\Omega\times \mathbb{R}^3$ as $\gamma=\partial\Omega\times \mathbb{R}^3$, and split
it into an outgoing boundary $\gamma_+$, an incoming boundary $\gamma_-$,
and a singular boundary $\gamma_0$ for grazing velocities:
\begin{eqnarray*}
\gamma_{+}&=&\{(x,v)\in\partial \Omega\times \mathbb{R}^3:~n(x)\cdot v>0\},\cr
\gamma_{-}&=&\{(x,v)\in\partial \Omega\times \mathbb{R}^3:~n(x)\cdot v<0\},\cr
\gamma_{0}&=&\{(x,v)\in\partial \Omega\times \mathbb{R}^3:~n(x)\cdot v=0\}.
\end{eqnarray*}
In terms of $F$, we define the specular boundary condition as
\begin{eqnarray}\label{specularBoundaryC}
&&F(t,x,v)=F(t,x,R(x)v),\qquad(x,v)\in\gamma^-,
\end{eqnarray}
where
\begin{eqnarray*}
&&R(x)v=v-2\{n(x)\cdot v\}n(x).
\end{eqnarray*}
Given $(t,x,v)$, let $[X(s), V(s)]=[X(s;t,x,v), V(s;t,x,v)]=[x+(s-t)v,v]$ be the trajectory (or the characteristics)
for the Boltzmann equation (\ref{Boltzmann Equation}):
\begin{eqnarray*}\label{trajectoryEquation}
&&\frac{dX(s)}{ds}=V(s),\quad\frac{V(s)}{ds}=0
\end{eqnarray*}
with the initial condition $[X(t;t,x,v), V(t;t,x,v)]=[x,v]$.
\begin{definition}\cite{GuoConti} (Backward exit time)
For $(x,v)$ with $x\in \bar{\Omega}$ such that there exists some $\tau>0, x-sv\in \Omega$ for $0\leq s\leq \tau$,
we define $t_{\mathbf{b}}(x,v)>0$ to be the last moment at which the back-time straight line
$[X(0;t,x,v), V(0;t,x,v)]$ remains in the interior of $\Omega$:
\begin{eqnarray*}\label{back exit time}
t_{\mathbf{b}}(x,v)=\sup\{\tau>0: x-sv\in\Omega \mbox{ for all } 0\leq s\leq \tau\}
\end{eqnarray*}
We also define
\begin{eqnarray*}
x_{\mathbf{b}}(x,v)=x(t_{\mathbf{b}})=x-t_{\mathbf{b}}v\in\partial\Omega.
\end{eqnarray*}
We always have $v\cdot n(x_{\mathbf{b}})\leq 0$.
\end{definition}
%%%%%%%%%%%%%%%%%%%%%%%%%%%%%%%%%%%%%%%%%%%%%%%%%%%%%%%%%%%%%%%%%%%%%%%%%%%%%%%%%%%%%%%%%%%%%%%%%%%%%%%%%%%%%%%%
% %%%%%%%%%%%%%%%%%%%%%%%%%%%%%%%%%%%%%%%%%%%%%%%%%%%%%%%%%%%%%%%%%%%%%%%%%%%%%%%%%%%%%%%%%%%%%%%%%%%%%%%%%%%%%%
% %
% %
% %                     subsection: boundary conditions
% %
% %
% %%%%%%%%%%%%%%%%%%%%%%%%%%%%%%%%%%%%%%%%%%%%%%%%%%%%%%%%%%%%%%%%%%%%%%%%%%%%%%%%%%%%%%%%%%%%%%%%%%%%%%%%%%%%%%
%%%%%%%%%%%%%%%%%%%%%%%%%%%%%%%%%%%%%%%%%%%%%%%%%%%%%%%%%%%%%%%%%%%%%%%%%%%%%%%%%%%%%%%%%%%%%%%%%%%%%%%%%%%%%%%%
\subsection{Boundary conditions and conservation laws}
(\ref{RLM}) in the introduction, we defined the rotational local Maxwellian $\mu_{\varpi}$ as
\begin{eqnarray*}
\mu_{\varpi}(v,x)=\frac{1}{\sqrt{(2\pi)^3}}\exp\Big(-{\frac{1}{2}}|v|^2+\varpi\times v\cdot x\Big),
\end{eqnarray*}
where $\varpi$ denotes an fixed vector in $R^3$.
For notational simplicity, we introduce $m_{\varpi}$ as
\begin{eqnarray*}
m_{\varpi}(x,v)\equiv\frac{1}{\sqrt{(2\pi)^3}}e^{-\frac{|v-\varpi\times x|^2}{2}},
%&\equiv \frac{1}{\sqrt{(2\pi)^3}}e^{-\frac{1}{2}|v|^2+\varpi\times x\cdot v}\cr
%&=\frac{1}{\sqrt{(2\pi)^3}}e^{\frac{|\varpi\times x|^2}{2}}e^{-\frac{|v-\varpi\times x|^2}{2}}\cr
%&\equiv\frac{1}{\sqrt{(2\pi)^3}}e^{\frac{|\varpi\times x|^2}{2}}m_{\varpi}(x,v).
\end{eqnarray*}
satisfying
\begin{eqnarray*}
\mu_{\varpi}(x,v)\equiv e^{\frac{|\varpi\times x|^2}{2}}m_{\varpi}(x,v).
%&\equiv \frac{1}{\sqrt{(2\pi)^3}}e^{-\frac{1}{2}|v|^2+\varpi\times x\cdot v}\cr
%&=\frac{1}{\sqrt{(2\pi)^3}}e^{\frac{|\varpi\times x|^2}{2}}e^{-\frac{|v-\varpi\times x|^2}{2}}\cr
%&\equiv\frac{1}{\sqrt{(2\pi)^3}}e^{\frac{|\varpi\times x|^2}{2}}m_{\varpi}(x,v).
\end{eqnarray*}
Throughout this paper, we consider the following perturbation:
\begin{eqnarray*}
F=\mu_{\varpi}+\sqrt{m}_{\varpi}f.
\end{eqnarray*}
Then the Boltzmann equation can be rewritten as
\begin{eqnarray}
\{\partial_t+v\cdot\nabla+L^{\varpi}\}f=\Gamma^{\varpi}(f,f).\quad f(0,x,v)=f_0(x,v),
\end{eqnarray}
with the boundary condition:
\begin{eqnarray}\label{specularBoundaryC_f}
f(t,x,v)=f(t,x,R(x)v)~\mbox{ for }~(x,v)\in \gamma_-.
\end{eqnarray}
The linear Boltzmann operator is given by
\begin{eqnarray*}
L^{\varpi}f&\equiv&-\frac{1}{\sqrt{m_{\varpi}}}\{Q(\mu_{\varpi}, \sqrt{m_{\varpi}}f)+Q(\sqrt{m_{\varpi}}f,\mu_{\varpi})\}\cr
&=&\nu^{\varpi}f-K^{\varpi,x}f\cr
&=&\nu^{\varpi}f-\int K^{\varpi,x}_{w_{\varpi}}(v,v^{\prime})f(v^{\prime})dv^{\prime},
\end{eqnarray*}
with the collision frequency $\nu^{\varpi}(x,v)=\int_{\mathbb{R}^3}|v-u|^{\gamma}q_0(\theta)m_{\varpi}(u)du\sim (1+|v-\varpi\times x|^2)^{\frac{1}{2}}$ for $0\leq \gamma\leq 1$.
For convenience, we define $\mathcal{L}_{\varpi}$ as

\begin{eqnarray*}
\mathcal{L}^{\varpi}f&=&-\frac{1}{\sqrt{m_{\varpi}}}\{Q(m_{\varpi}, \sqrt{m_{\varpi}}f)+Q(\sqrt{m_{\varpi}}f,m_{\varpi})\}\cr
&=&\nu^{\varpi}f-\int \mathcal{K}^{\varpi,x}(v,v^{\prime})f(v^{\prime})dv^{\prime}.
\end{eqnarray*}
The following properties of $\mathcal{L}_{\varpi}$ and $L_{\varpi}$ can be readily checked.
\begin{eqnarray}\label{proeprites of L}
\begin{split}
L^{\varpi}f&=e^{\frac{|\varpi\times x|^2}{2}}\mathcal{L}^{\varpi}f,\cr
\nu^{\varpi}&=e^{\frac{|\varpi\times x|^2}{2}}\nu^{\varpi},\cr
K^{\varpi,x}(v,v^{\prime})&=e^{\frac{|\varpi\times x|^2}{2}}\mathcal{K}^{\varpi,x}(v,v^{\prime})\cr
&=e^{\frac{|\varpi\times x|^2}{2}}\mathcal{K}^{0,x}(v-\varpi\times x,v^{\prime}-\varpi\times x),
\end{split}
\end{eqnarray}
The kernels of $L$ and $\mathcal{L}$ are given as
\begin{eqnarray*}\label{kernel}
\begin{split}
\ker L^{\varpi}&=span\{\sqrt{\mu}_{\varpi},v\sqrt{\mu}_{\varpi},|v|^2\sqrt{\mu}_{\varpi}\},\cr
\ker \mathcal{L}^{\varpi}&=span\{\sqrt{m}_{\varpi},v\sqrt{m}_{\varpi},|v|^2\sqrt{m}_{\varpi}\}.
\end{split}
\end{eqnarray*}
We define the macroscopic projection $P_{\varpi}$ as
\begin{eqnarray}\label{MacroPro}
P_{\varpi}f=a(x,t)\sqrt{\mu}_{\varpi}+b(x,t)\cdot v\sqrt{\mu}_{\varpi}
+c(t,x)|v|^2\sqrt{\mu}_{\varpi},
\end{eqnarray}
where
\begin{eqnarray*}
&&a(x,t)=\int_{\mathbb{R}^3}f\sqrt{\mu}_{\varpi}dv,\quad
b(x,t)=\int_{\mathbb{R}^3}fv\sqrt{\mu}_{\varpi}dv,\quad
c(t,x)=\int_{\mathbb{R}^3}f\sqrt{\mu}_{\varpi}|v|^2dv.
\end{eqnarray*}
The bilinear perturbation is defined as
\begin{eqnarray}\label{Gamma}
\begin{split}
\Gamma^{\varpi}(f,f)&=\frac{1}{\sqrt{m_{\varpi}}}Q(\sqrt{m}_{\varpi}f, \sqrt{m}_{\varpi}f)\cr
&=\Gamma^{\varpi}_{gain}(f,f)-\Gamma^{\varpi}_{loss}(f,f).
\end{split}
\end{eqnarray}
%In terms of $f$, we formulate the boundary condition (\ref{specularBoundaryC}) as
With the specular reflection condition (\ref{specularBoundaryC_f}), it is well-known that mass, angular momentum, and energy are
conserved for (\ref{Boltzmann Equation}). Without loss of generality, we may always assume that the mass-angular momentum-energy conservation laws hold
for $t\geq 0$, in terms of the perturbation $f$:
\begin{eqnarray}\label{ConservationLaw}
\begin{split}
&\int_{\Omega\times \mathbb{R}^3}f(t,x,v)\sqrt{m}_{\varpi}dxdv=0,\cr
&\int_{\Omega\times \mathbb{R}^3}(x\times v)f(t,x,v)\sqrt{m}_{\varpi}dxdv=0,\cr
&\int_{\Omega\times \mathbb{R}^3}|v|^2f(t,x,v)\sqrt{m}_{\varpi}dxdv=0.
\end{split}
\end{eqnarray}
%Since the domain $\Omega$ has the axis of rotational symmetry, we assume the conservation of
%angular momentum:
%Without loss of generality, we define $\Lambda_i$ for the domain satisfying the geometric assumption $\mathcal{(A)}1$ as
%\begin{align}\begin{aligned}\Lambda_1&=\Lambda_2=(0,0,0),\cr\Lambda_1&=e_3=(0,0,1).\end{aligned}\end{align}
%and for domain satisfying $\mathcal{(A)}2$\begin{align}\begin{aligned}\Lambda_1&=e_1=(1,0,0),\cr
%\Lambda_2&=e_2=(0,1,0),\cr\Lambda_3&=e_3=(0,0,1).\end{aligned}\end{align}
%%%%%%%%%%%%%%%%%%%%%%%%%%%%%%%%%%%%%%%%%%%%%%%%%%%%%%%%%%%%%%%%%%%%%%%%%%%%%%%%%%%%%%%%%%%%%%%%%%%%%%%%%%%%%%%%
% %%%%%%%%%%%%%%%%%%%%%%%%%%%%%%%%%%%%%%%%%%%%%%%%%%%%%%%%%%%%%%%%%%%%%%%%%%%%%%%%%%%%%%%%%%%%%%%%%%%%%%%%%%%%%%
% %
% %
% %                    Subsection: Main result
% %
% %
% %%%%%%%%%%%%%%%%%%%%%%%%%%%%%%%%%%%%%%%%%%%%%%%%%%%%%%%%%%%%%%%%%%%%%%%%%%%%%%%%%%%%%%%%%%%%%%%%%%%%%%%%%%%%%%
%%%%%%%%%%%%%%%%%%%%%%%%%%%%%%%%%%%%%%%%%%%%%%%%%%%%%%%%%%%%%%%%%%%%%%%%%%%%%%%%%%%%%%%%%%%%%%%%%%%%%%%%%%%%%%%%
\subsection{Main results}
We are now in a place to state our main theorem. We first introduce the weight function for $\beta>\frac{3}{2}$:
\begin{eqnarray}\label{Weight}
w_{\varpi}(v)=(1+|v-\varpi\times x|^2)^{\beta}.
\end{eqnarray}
\begin{theorem}\label{MainResult}
$(1)$ Assume $\Omega$ satisfies the geometric assumption $\mathcal{(A)}$.
Then any Maxwellian solution of the Boltzmann equation with the specular boundary condition in $\Omega$
takes the following form after the normalization:
\begin{eqnarray*}
\frac{1}{\sqrt{(2\pi)^3}}\exp\Big(-{\frac{1}{2}}|v|^2+\varpi\times v\cdot x\Big)
\end{eqnarray*}
for some fixed vector $\varpi\in \mathbb{R}^3$. Moreover, except for the case $\Omega=\mathbb{S}^3$, $\varpi$ is given by
$\varpi=(0,0,\varpi_3)$ for some $\varpi_3\in \mathbb{R}$.
\newline
$(2)$ Assume $\Omega$ satisfies the geometric assumption $\mathcal{(A)}$ and $\varpi$ is parallel to $e_3$.
Then there exists $\delta>0$ such that if $F_0(x,v)=\mu_{\varpi}+\sqrt{m}_{\varpi}f_0(x,v)\geq 0$
and $\|w_{\varpi}f_0\|_{\infty}\leq \delta$, there exists a unique solution
$F(t,x,v)=\mu_{\varpi}+\sqrt{m}_{\varpi}f(t,x,v)\geq 0$ to the specular boundary value problem (\ref{specularBoundaryC_f}) to
the Boltzmann equation (\ref{Boltzmann Equation})
such that
\begin{eqnarray*}
\sup_{0\leq t\leq\infty}e^{\lambda t}\|w_{\varpi}f(t)\|_{\infty}\leq C\|w_{\varpi}f_0\|_{\infty}.
\end{eqnarray*}
Moreover, if $f_0(x,v)$ is continuous except on $\gamma_0$ and
\begin{eqnarray*}
f_0(x,v)=f_0(x,R(x)v)\quad\mbox{on }\partial\Omega,
\end{eqnarray*}
then $f(t,x,v)$ is continuous in $[0,\infty)\times\{\bar\Omega\times \mathbb{R}^3\backslash\gamma_0\}$.
\end{theorem}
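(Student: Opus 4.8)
\emph{Proof strategy.}
The plan is to handle the two assertions by quite different means: $(1)$ is a classification (rigidity) statement for stationary Maxwellians, while $(2)$ is the quantitative nonlinear stability obtained from the $L^2$-$L^{\infty}$ scheme. For $(1)$, I would start from the fact that a Maxwellian $F$ with $Q(F,F)=0$ which is also a stationary solution of (\ref{Boltzmann Equation}) must satisfy $v\cdot\nabla_x F=0$ for every $v\in\mathbb{R}^3$. Writing $F$ in hydrodynamic form with density $\rho(x)$, bulk velocity $u(x)$, temperature $T(x)$, and imposing that the polynomial $v\cdot\nabla_x(\log F)$ vanish identically in $v$, one finds $T$ a positive constant (normalized to $1$), $u$ a Killing field $u(x)=u_0+\varpi\times x$ for constant vectors $u_0,\varpi$, and $\log\rho$ an explicit quadratic determined by $u$; for a bounded domain this already produces the rotational local Maxwellian form (\ref{RLM}) up to the constant drift $u_0$. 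The specular boundary condition $F(x,v)=F(x,R(x)v)$ forces $u(x)\cdot n(x)=0$ on $\partial\Omega$, i.e.\ $u_0\cdot n(x)+(\varpi\times x)\cdot n(x)=0$ for all $x\in\partial\Omega$; since in a strictly convex domain the normals $n(x)$ realize every unit vector, this first gives $u_0=0$, and then $(\varpi\times x)\cdot n(x)=0$ on $\partial\Omega$. Comparing the last identity with the rotational symmetry condition $(x\times e_3)\cdot n(x)=0$ from $\mathcal{(A)}$ and using analyticity of $\xi$, one checks that a component of $\varpi$ transverse to $e_3$ can persist only when $\partial\Omega$ is a sphere; otherwise $\varpi\parallel e_3$. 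This is the computation carried out in the appendix.

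For $(2)$, I would run the three steps of the $L^2$-$L^{\infty}$ framework. \emph{Step 1 (linear $L^2$ decay).} For the linearized problem $\{\partial_t+v\cdot\nabla+L^{\varpi}\}f=g$ with (\ref{specularBoundaryC_f}), I would invoke the coercivity of $L^{\varpi}$, uniform in $x\in\Omega$, from Lemma \ref{positivity}, i.e.\ the standard lower bound for $\langle L^{\varpi}f,f\rangle$ by the $\nu^{\varpi}$-weighted $L^2$ norm of the microscopic part $f-P_{\varpi}f$, together with the hyperbolic-type energy method of \cite{GuoConti} for the fluid fields $a,b,c$ in (\ref{MacroPro}); here the conservation of angular momentum in (\ref{ConservationLaw}) is the substitute for linear-momentum conservation and is what annihilates the zero mode $(\varpi\times v)\sqrt{\mu}_{\varpi}$ generated by the rotation. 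This produces $\|f(t)\|_{L^2}\leq C e^{-\lambda_0 t}\big(\|f_0\|_{L^2}+\sup_{0\leq s\leq t}e^{\lambda_0 s}\|g(s)\|_{L^2}\big)$ for some $\lambda_0>0$. \emph{Step 2 (weighted $L^{\infty}$).} Iterating the Duhamel formula along the specular characteristics twice in the spirit of \cite{Vidav,GuoConti}, I would use the identity (\ref{proeprites of L}), which writes $K^{\varpi,x}(v,v^{\prime})=e^{|\varpi\times x|^2/2}\,\mathcal{K}^{0,x}(v-\varpi\times x,\,v^{\prime}-\varpi\times x)$, to transfer the Grad-type estimates on the classical kernel $\mathcal{K}^{0,x}$; after a velocity cut-off the large-velocity contribution is small by Gaussian decay of the kernel, while in the region $|v|\leq N$ (where, as flagged in the introduction, the shift $\varpi\times x$ is negligible) the genuinely singular double-integral term is estimated exactly as for the uniform Maxwellian and then bounded by $\|f\|_{L^2}$ through the bootstrap. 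Since the weight $w_{\varpi}$ in (\ref{Weight}) and the collision frequency $\nu^{\varpi}\sim(1+|v-\varpi\times x|^2)^{1/2}$ are comoving with the local shift, the usual weighted estimates go through and give $\sup_t e^{\lambda t}\|w_{\varpi}f(t)\|_{\infty}\leq C\big(\|w_{\varpi}f_0\|_{\infty}+\sup_t e^{\lambda t}\|f(t)\|_{L^2}+\sup_t e^{\lambda t}\|w_{\varpi}g(t)\|_{\infty}\big)$.

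\emph{Step 3 (nonlinear closure, positivity, continuity).} Using the bilinear estimate $\|w_{\varpi}\Gamma^{\varpi}(f,h)\|_{\infty}\leq C\|w_{\varpi}f\|_{\infty}\|w_{\varpi}h\|_{\infty}$ (again reduced, via (\ref{proeprites of L}), to the uniform-Maxwellian bilinear bound in the comoving frame), I would set up the Picard iteration $\{\partial_t+v\cdot\nabla+L^{\varpi}\}f^{n+1}=\Gamma^{\varpi}(f^n,f^n)$ with boundary condition (\ref{specularBoundaryC_f}) and data $f_0$. For $\|w_{\varpi}f_0\|_{\infty}\leq\delta$ sufficiently small, Steps 1 and 2 show that $\sup_t e^{\lambda t}\|w_{\varpi}f^n(t)\|_{\infty}$ stays $\leq C\|w_{\varpi}f_0\|_{\infty}$ and that $\{f^n\}$ is contractive, yielding the unique global solution with the stated exponential decay. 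Nonnegativity $F\geq0$ is preserved because the iteration may equivalently be run for $F^n\geq 0$ in mild (exponential-multiplier) form, with nonnegative gain term and loss term linear in $F^{n+1}$. Finally, continuity follows from the geometry in $\mathcal{(A)}$: in a strictly convex analytic domain, $t_{\mathbf{b}}$, $x_{\mathbf{b}}$ and the reflected velocity $R(x_{\mathbf{b}})v$ depend continuously on $(t,x,v)$ away from $\gamma_0$, with only finitely many reflections in a bounded time interval, so each iterate $f^n$ is continuous on $[0,\infty)\times\{\bar\Omega\times\mathbb{R}^3\backslash\gamma_0\}$ whenever $f_0$ is and obeys the matching condition on $\partial\Omega$; uniform convergence then transfers the continuity to $f$.

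The main obstacle I anticipate is Step 2: carrying the iterated Duhamel and velocity-decomposition argument through with the genuinely $x$-dependent operators $\nu^{\varpi}$ and $K^{\varpi,x}$ while still extracting the smallness needed to absorb the dangerous double-integral term into $\sup_t e^{\lambda t}\|f(t)\|_{L^2}$; this is exactly where the observation that the rotation is negligible in the small-velocity regime is essential. A close second is Step 1: verifying that the coercivity of $L^{\varpi}$ does not degenerate as $x$ ranges over $\Omega$, so that the $L^2$ dissipation is uniform up to the boundary, and that angular-momentum conservation correctly plays the role of linear-momentum conservation in the macroscopic energy method.
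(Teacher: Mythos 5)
Your overall strategy matches the paper's. For Part $(2)$ you correctly reproduce the three-step $L^2$--$L^{\infty}$ scheme: uniform-in-$x$ coercivity of $L^{\varpi}$ via the translation $\tau_{\varpi\times x}$ to the standard operator (this is exactly Lemma \ref{positivity}), the hyperbolic energy argument with the angular-momentum conservation in (\ref{ConservationLaw}) killing the rotational zero mode $Z=\varpi\times x\cdot v\,\sqrt{m}_{\varpi}$, the twice-iterated Duhamel formula along specular characteristics with the velocity cut-off and the observation that the shift $\varpi\times x$ is harmless in the region $|v|\lesssim N$, and the contraction/positivity/continuity closure. This is the paper's proof.

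For Part $(1)$, however, there is a genuine gap in the step where you remove the constant drift. You write that from
\begin{equation*}
u_0\cdot n(x) + (\varpi\times x)\cdot n(x) = 0 \quad\text{for all }x\in\partial\Omega,
\end{equation*}
strict convexity and the fact that the normals sweep out every direction ``first gives $u_0=0$.'' That inference is not valid as stated: for each unit vector $\nu$ there is a \emph{single} boundary point $x_{\nu}$ with $n(x_{\nu})=\nu$, and the identity only tells you $u_0\cdot\nu = -(\varpi\times x_{\nu})\cdot\nu$, which depends on $\nu$ through $x_{\nu}$; the two terms are not independent constraints, so one cannot separate them. (Indeed, if one could deduce $u_0=0$ so cheaply, there would be no reason the argument should single out the sphere.) What the paper actually does to kill $b_1$ (your $u_0$) is: decompose $b_1 = \beta_1\,\varpi/|\varpi| - x_0\times\varpi$ with $x_0\perp\varpi$, so that the boundary identity becomes $\beta_1(\varpi/|\varpi|)\cdot n(x) + \varpi\times(x-x_0)\cdot n(x)=0$; then choose the boundary point $x'$ minimizing $\varpi\cdot x$ and apply the Lagrange multiplier theorem to get $\varpi = \lambda n(x')$, which, evaluated at $x=x'$, forces $\beta_1=0$ because $\varpi\times(x'-x_0)\perp\varpi$; and finally use the rotational-symmetry relation $(x\times e_3)\cdot n(x)=0$ at three boundary points with $n=e_1,e_2,e_3$ to conclude $\varpi\times x_0=0$, hence $b_1=0$. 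Only after that does the sphere/non-sphere dichotomy for $\varpi\parallel e_3$ come out of the explicit computation using $x_1 n_2 - x_2 n_1 = 0$. You should replace your one-line justification for $u_0=0$ with this argument (or an equivalent one); the rest of your sketch for Part $(1)$ — passing from $T=\text{const}$, $u$ a Killing field, $\log\rho$ quadratic, to the form (\ref{RLM}), then invoking rotational symmetry to pin $\varpi$ — is consistent with the appendix.
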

\begin{remark}
(1) Theorem 1.2 (1) states that the rotational local Maxwellian is essentially the only possible equilibrium state of the Boltzmann equation
in a rotationally symmetric domain. The proof will be given in Appendix.\newline
(2) The assumption on $\varpi$ imposed in Theorem 1.2 (2) is justified by Theorem 1.2 (1), because in the case of $S^2$, we may redefine $\varpi$ as $e_3$ by the symmetry of $\mathbb{S}^3$. \newline
(3) Different proofs of  Theorem 1.2 (1) can also be found in \cite{Desvil} and \cite{Sone} .
\end{remark}
\subsection{Notations}
Before we proceed further, we set some notational conventions for various norms and inner product that
will be used in later sections.
We define $L^2$-norm, $L^{\infty}$-norm and weighted $L^2$-norm as:
\begin{eqnarray*}
&&\|f\|_{\infty}=ess\sup |f|,\cr
&&\|f\|^2=\int_{\mathbb{R}^3}f^2(v)dv,\mbox{ or }\int_{\Omega\times \mathbb{R}^3}f^2(t,x,v)dxdv,\cr
&&\|f\|^2_{v_{\varpi}}=\int_{\mathbb{R}^3}f^2(t,x,v)(1+|v-\varpi\times x|^2)^{\frac{1}{2}}dv,\mbox{ or}\cr
&&\hspace{1.4cm}\int_{\Omega\times \mathbb{R}^3}f^2(t,x,v)(1+|v-\varpi\times x|^2)^{\frac{1}{2}}dxdv.
\end{eqnarray*}
We use the following standard notation for corresponding inner products.
\begin{eqnarray*}
&&(f,g)_{L^2_v}=\int_{\mathbb{R}^3}f(v)g(v)dv,\cr
&&(f,g)_{L^2_{x,v}}=\int_{\Omega\times \mathbb{R}^3}f(x,v)g(x,v)dxdv,\cr
&&(f,g)_{v_{\varpi}}=\int_{\mathbb{R}^3}f(v)g(v)(1+|v-\varpi\times x|^2)^{\frac{1}{2}}dv,\mbox{ or}\cr
&&\hspace{1.4cm}\int_{\Omega\times \mathbb{R}^3}f(x,v)g(x,v)(1+|v-\varpi\times x|^2)^{\frac{1}{2}}dxdv.
\end{eqnarray*}
%%%%%%%%%%%%%%%%%%%%%%%%%%%%%%%%%%%%%%%%%%%%%%%%%%%%%%%%%%%%%%%%%%%%%%%%%%%%%%%%%%%%%%%%%%%%%%%%%%%%%%%%%%%%%%%%
% %%%%%%%%%%%%%%%%%%%%%%%%%%%%%%%%%%%%%%%%%%%%%%%%%%%%%%%%%%%%%%%%%%%%%%%%%%%%%%%%%%%%%%%%%%%%%%%%%%%%%%%%%%%%%%
% %
% %
% %                    Section: Preliminary
% %
% %
% %%%%%%%%%%%%%%%%%%%%%%%%%%%%%%%%%%%%%%%%%%%%%%%%%%%%%%%%%%%%%%%%%%%%%%%%%%%%%%%%%%%%%%%%%%%%%%%%%%%%%%%%%%%%%
%%%%%%%%%%%%%%%%%%%%%%%%%%%%%%%%%%%%%%%%%%%%%%%%%%%%%%%%%%%%%%%%%%%%%%%%%%%%%%%%%%%%%%%%%%%%%%%%%%%%%%%%%%%%%%%
\section{preliminary}
%%%%%%%%%%%%%%%%%%%%%%%%%%%%%%%%%%%%%%%%%%%%%%%%%%%%%%%%%%%%%%%%%%%%%%%%%%%%%%%%%%%%%%%%%%%%%%%%%%%%%%%%%%%%%%5
%
%                   Velocity Lemma
%
%%%%%%%%%%%%%%%%%%%%%%%%%%%%%%%%%%%%%%%%%%%%%%%%%%%%%%%%%%%%%%%%%%%%%%%%%%%%%%%%%%%%%%%%%%%%%%%%%%%%%%%%%%%%%%%
In this section, we establish important technical lemmas for later use. The following lemma shows that
if $\Omega$ satisfies $\mathcal{(A)}$,
the trajectory
cannot reach the singular boundary $\gamma_0$ if it is not grazing initially.
\begin{lemma}\label{Velocity Lemma}$($\emph{Velocity Lemma}$)$ \cite{GuoConti}
Let $\Omega$ be a domain satisfying the geometric assumption $\mathcal{(A)}$. Define the functional
along the trajectory as
\begin{eqnarray}\label{functional}
&&\alpha(s)=\xi^2(X(s))+[V(x)\cdot \nabla \xi(X(s))]^2-2\{V(s)\cdot\xi(X(s))\cdot V(s)\}\xi(X(x)).
\end{eqnarray}
Let $X(s)\in \bar\Omega$ for $t_1\leq t\leq t_2$. Then there exists constant $C_{\xi}$ such that
\begin{align*}
\begin{aligned}
e^{C_{\xi}(|V(t_1)|+1)t_1}\alpha(t_1)&\leq e^{C_{\xi}(|V(t_1)|+1)t_2}\alpha(t_2),\cr
e^{-C_{\xi}(|V(t_1)|+1)t_1}\alpha(t_1)&\geq e^{-C_{\xi}(|V(t_1)|+1)t_2}\alpha(t_2).
\end{aligned}
\end{align*}
\end{lemma}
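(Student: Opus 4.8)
The statement is equivalent to a one-sided differential inequality for $\alpha$ along the characteristic, so the plan is to differentiate, extract that inequality, and integrate it. Since the free-transport characteristics are straight lines, $V(s)\equiv v$ is constant on $[t_1,t_2]$; in particular $|V(t_1)|=|v|$ throughout. Abbreviating $h(s):=\xi(X(s))$, one has $h'(s)=\nabla\xi(X(s))\cdot v$, $h''(s)=\partial_{ij}\xi(X(s))v^iv^j$, $h'''(s)=\partial_{ijk}\xi(X(s))v^iv^jv^k$, and
\[
\alpha(s)=h(s)^2+h'(s)^2-2h''(s)h(s).
\]
If I can show that $\alpha(s)\ge 0$ and $|\alpha'(s)|\le C_\xi(1+|v|)\,\alpha(s)$ on $[t_1,t_2]$ for a geometric constant $C_\xi$, then the lemma follows at once: $\alpha'+C_\xi(1+|v|)\alpha\ge 0$ makes $s\mapsto e^{C_\xi(1+|v|)s}\alpha(s)$ nondecreasing on $[t_1,t_2]$, which is the first inequality, and $\alpha'-C_\xi(1+|v|)\alpha\le 0$ makes $s\mapsto e^{-C_\xi(1+|v|)s}\alpha(s)$ nonincreasing, which is the second.

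First I would compute $\alpha'$, where the key observation is a cancellation: the term $2h'h''$ produced by differentiating $(h')^2$ is exactly cancelled by the $-2h''h'$ part of $\frac{d}{ds}(-2h''h)$, leaving
\[
\alpha'(s)=2h(s)\big(h'(s)-h'''(s)\big)=2\,\xi(X(s))\Big[\nabla\xi(X(s))\cdot v-\partial_{ijk}\xi(X(s))v^iv^jv^k\Big].
\]
So everything reduces to estimating $2|h|\,|h'-h'''|$ by $\alpha$, and here the geometric assumption enters. Since $X(s)\in\bar\Omega$ we have $h(s)=\xi(X(s))\le 0$, and strict convexity gives $h''(s)=\partial_{ij}\xi(X(s))v^iv^j\ge C_\xi|v|^2\ge 0$ (with $C_\xi$ the constant in the definition). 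Hence $-2h''(s)h(s)=2h''(s)|h(s)|\ge 0$, so $\alpha(s)=h(s)^2+h'(s)^2+2h''(s)|h(s)|$ is a sum of three nonnegative terms and thus $\ge 0$; moreover each of them is $\le\alpha(s)$, which yields
\[
|h(s)|\le\sqrt{\alpha(s)},\qquad |h'(s)|\le\sqrt{\alpha(s)},\qquad 2C_\xi|v|^2\,|h(s)|\le\alpha(s).
\]

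Finally, since $\bar\Omega$ is compact and $\xi$ is analytic (in particular $C^3$ on a neighbourhood of $\bar\Omega$), there is a constant $C$ with $|h'(s)|\le C|v|$ and $|h'''(s)|\le C|v|^3$; combining this with the displayed bounds,
\[
|\alpha'(s)|\le 2|h|\,|h'|+2|h|\,|h'''|\le 2\sqrt{\alpha(s)}\,\sqrt{\alpha(s)}+2C|v|\big(|v|^2|h(s)|\big)\le \Big(2+\tfrac{C}{C_\xi}\Big)(1+|v|)\,\alpha(s),
\]
which is the desired differential inequality. The step I expect to be the main obstacle is exactly this last estimate: the cubic-in-velocity term $h'''$ cannot be absorbed into $(1+|v|)\alpha$ by size alone, and it is controlled only because strict convexity furnishes the lower bound $\alpha(s)\ge 2C_\xi|v|^2|h(s)|$, against which two of the three powers of $|v|$ are spent, leaving the single linear factor $(1+|v|)$ that appears in the statement. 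Everything else is the cancellation in $\alpha'$ and a Gronwall/monotonicity argument; note that rotational symmetry plays no role here — only strict convexity and smoothness of $\xi$ are used.
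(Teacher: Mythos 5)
Your proof is correct, and it follows the same route as Guo's original argument (the paper itself gives no proof, deferring to \cite{GuoConti}): differentiate $\alpha$ along the free-streaming segment, use the cancellation of the $2h'h''$ terms, control $|\alpha'|$ by $C_\xi(1+|v|)\alpha$ using convexity to get $\alpha\ge 2h''|h|\ge 2C_\xi|v|^2|h|$ for the cubic term, and integrate the resulting Gronwall inequality. Your remark that only convexity and $C^3$-boundedness of $\xi$ enter (and not rotational symmetry) is also accurate.

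Two small points worth recording. First, the displayed functional in the paper contains typographical errors — $V(x)$ should read $V(s)$, the middle occurrence of $\xi$ in the last term should be $\nabla^2\xi$ (i.e.\ $\partial_{ij}\xi$), and $\xi(X(x))$ should be $\xi(X(s))$ — and you correctly decoded these; your $h,h',h'',h'''$ dictionary is the intended reading. Second, and this matters for how the lemma is used downstream (e.g.\ in the back-time cycle estimates): the lemma as stated applies to a single free-streaming segment, where $V(s)\equiv v$ so $|V(t_1)|=|v|$ throughout. To iterate it across specular reflections, one additionally needs that $\alpha$ is continuous across a bounce, which holds because $\xi(X)=0$ there, $|V|$ is preserved, and $V\cdot\nabla\xi$ merely changes sign; that observation is not part of the present lemma, but your proof, combined with it, is exactly what licenses the repeated applications later in the paper.
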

\begin{proof}
See \cite{GuoConti}.
\end{proof}
%%%%%%%%%%%%%%%%%%%%%%%%%%%%%%%%%%%%%%%%%%%%%%%%%%%%%%%%%%%%%%%%%%%%%%%%%%%%%%%%%%%%%%%%%%%%%
%
%                   Lemma
%
%%%%%%%%%%%%%%%%%%%%%%%%%%%%%%%%%%%%%%%%%%%%%%%%%%%%%%%%%%%%%%%%%%%%%%%%%%%%%%%%%%%%%%%%%%%%%
\begin{lemma}\label{1234}\cite{GuoConti}
Let $(t,x,v)$ be connected with $(t-t_b,x-b_{\mathbf{b}}v,v)$ backward in time through a trajectory
\begin{enumerate}
\item The backward exit time $t_{\mathbf{b}}(x,v)$ is lower semicontinuous.
\item If
\begin{eqnarray*}
v\cdot n(x_{\mathbf{b}})<0,
\end{eqnarray*}
then $(t_{\mathbf{b}}(x,v), x_{\mathbf{b}}(x,v))$ are smooth functions of $(x,v)$ so that
\begin{eqnarray*}
&&\nabla_x t_{\mathbf{b}}=\frac{n(x_{\mathbf{b}})}{v\cdot n(x_{\mathbf{b}})},\quad \nabla_vt_{\mathbf{b}}=\frac{t_{\mathbf{b}}n(x_{\mathbf{b}})}{v\cdot n(x_{\mathbf{b}})},\cr
&&\nabla_x x_{\mathbf{b}}=I+\nabla_x t_{\mathbf{b}}\otimes v,\quad \nabla_v x_{\mathbf{b}}=t_{\mathbf{b}}I+\nabla_vt_{\mathbf{b}}\otimes v.
\end{eqnarray*}
Furthermore, if $\xi$ is real analytic, then $(t_{\mathbf{b}}(x,v), x_{\mathbf{b}}(x,v))$ are also real analytic.
\item Let $x_i\in \partial\Omega$, for $i=1,2$ and let $(t_1,x_1,v)$ and $(t_2,x_2,v)$ be connected with the trajectory. %\ref{trajectory}
Then there exists a constant $C_{\xi}$ such that
\begin{eqnarray*}\label{Lower bd of t}
|t_1-t_2|\geq\frac{|n(x_1)\cdot v|}{C_{\xi}|v|^2}.
\end{eqnarray*}
\item Define the boundary mapping
\begin{eqnarray*}
\Phi_{\mathbf{b}}:(t,x,v)\rightarrow (t-t_{\mathbf{b}},x_{\mathbf{b}}(x,v),v)\in \mathbb{R}\times\{\gamma_0\cup\gamma_-\}.
\end{eqnarray*}
Then $\Phi_{\mathbf{b}}$ and $\Phi^{-1}_{\mathbf{b}}$ maps zero-measure sets to zero-measure sets between either
$\{t\}\times\Omega \times \mathbb{R}^3$ and $\mathbb{R}\times\{\gamma_0\cup\gamma_-\}$ or $\mathbb{R}\times\{\gamma_0\cup\gamma_+\}
\rightarrow \mathbb{R}\times \{\gamma_0\cup\gamma_-\}$.
\end{enumerate}
\end{lemma}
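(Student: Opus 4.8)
The plan is to treat the four assertions in turn, using openness of $\Omega$ for (1), the implicit function theorem for (2), strict convexity for (3), and a covering argument for (4). For \emph{(1)}, fix $(x,v)$ with $x\in\bar\Omega$, $v\neq 0$, and any $0<\tau<t_{\mathbf b}(x,v)$; the compact segment $\{x-sv:0\le s\le\tau\}$ lies in the open set $\Omega$, hence at positive distance from $\partial\Omega$, so by continuity of $(x',v',s)\mapsto x'-sv'$ and compactness of $[0,\tau]$ the segment $\{x'-sv':0\le s\le\tau\}$ still lies in $\Omega$ for $(x',v')$ near $(x,v)$, whence $t_{\mathbf b}(x',v')\ge\tau$; letting $\tau\uparrow t_{\mathbf b}(x,v)$ gives lower semicontinuity. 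For \emph{(2)}, apply the implicit function theorem to $g(t,x,v):=\xi(x-tv)$: one has $g(t_{\mathbf b},x,v)=0$ and $\partial_t g(t_{\mathbf b},x,v)=-v\cdot\nabla\xi(x_{\mathbf b})=-|\nabla\xi(x_{\mathbf b})|\,(v\cdot n(x_{\mathbf b}))>0$ precisely because $v\cdot n(x_{\mathbf b})<0$, so $t_{\mathbf b}$ is locally $C^\infty$, and real analytic when $\xi$ is, and $x_{\mathbf b}=x-t_{\mathbf b}v$ inherits the same regularity; differentiating the identity $\xi(x-t_{\mathbf b}(x,v)v)\equiv0$ in $x$ and $v$ and solving for $\nabla_xt_{\mathbf b},\nabla_vt_{\mathbf b}$ gives the stated formulas, and $\nabla_xx_{\mathbf b},\nabla_vx_{\mathbf b}$ follow from the chain rule.

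For \emph{(3)} I would use strict convexity. Set $h(s):=\xi(X(s))$ along the trajectory joining $(t_1,x_1,v)$ and $(t_2,x_2,v)$; since $\Omega$ is strictly convex the chord between the two boundary points stays in $\bar\Omega$, so $h(t_1)=h(t_2)=0$, while $h''(s)=\partial_{ij}\xi(X(s))v^iv^j$ satisfies $C_\xi|v|^2\le h''(s)\le\|D^2\xi\|_{L^\infty(\{\xi\le0\})}|v|^2$. Expanding $0=h(t_2)$ to second order about $s=t_1$ yields $0=h'(t_1)(t_2-t_1)+\tfrac12 h''(s_*)(t_2-t_1)^2$ for some intermediate $s_*$, hence $|t_1-t_2|=2|h'(t_1)|/|h''(s_*)|$. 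Since $h'(t_1)=v\cdot\nabla\xi(x_1)=|\nabla\xi(x_1)|(v\cdot n(x_1))$ and $|\nabla\xi|$ is bounded below on the compact set $\partial\Omega$, absorbing all constants into $C_\xi$ gives $|t_1-t_2|\ge|n(x_1)\cdot v|/(C_\xi|v|^2)$.

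The main obstacle is \emph{(4)}. First, the grazing set $\gamma_0$ is null for the relevant measure on the phase boundary, being cut out by the single scalar equation $v\cdot n(x)=0$, so it suffices to work on the non-grazing part; there, by (2), $\Phi_{\mathbf b}$ and its inverse — the forward transport flow — are locally $C^1$ diffeomorphisms with locally bounded Jacobians. Covering $\{t\}\times\Omega\times\mathbb{R}^3$ and $\mathbb{R}\times\{\gamma_0\cup\gamma_\pm\}$ by countably many charts, in each of which $\partial\Omega$ is represented as a graph so that the induced surface measure is comparable to Lebesgue measure in the chart coordinates, the standard fact that locally Lipschitz maps send null sets to null sets then propagates negligibility through $\Phi_{\mathbf b}$ and $\Phi_{\mathbf b}^{-1}$ in both settings of the statement. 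I expect most of the work to lie in organizing these charts so that the phase-space measure and the boundary surface measure are matched up consistently near $\gamma_0$, and in checking that the exceptional grazing set is genuinely negligible for each of the two measures involved.
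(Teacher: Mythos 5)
Your proof is correct and uses the same standard ingredients as the cited source \cite{GuoConti}: openness plus compactness of the segment for (1), the implicit function theorem applied to $\xi(x-tv)$ for (2), a second-order Taylor expansion of $\xi$ along the chord together with strict convexity for (3), and local diffeomorphism away from a null set combined with the Lipschitz preservation of null sets for (4), where you can in fact simplify the first case of (4) by noting that strict convexity forces $v\cdot n(x_{\mathbf{b}}(x,v))<0$ strictly for every $x\in\Omega$ and $v\neq 0$, so the exceptional set there reduces to $\{v=0\}$. One caveat worth recording: carrying out the differentiation of $\xi(x-t_{\mathbf{b}}v)\equiv 0$ in $v$ actually gives $\nabla_v t_{\mathbf{b}}=-t_{\mathbf{b}}\,n(x_{\mathbf{b}})/\bigl(v\cdot n(x_{\mathbf{b}})\bigr)$, which is the sign consistent with the homogeneity $t_{\mathbf{b}}(x,\lambda v)=\lambda^{-1}t_{\mathbf{b}}(x,v)$ (so $v\cdot\nabla_v t_{\mathbf{b}}=-t_{\mathbf{b}}$); the positive sign printed in the lemma statement is a typo, not an error in your derivation.
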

\begin{proof}
See \cite{GuoConti}
\end{proof}
%%%%%%%%%%%%%%%%%%%%%%%%%%%%%%%%%%%%%%%%%%%%%%%%%%%%%%%%%%%%%%%%%%%%%%%%%%%%%%%%%%%%%%%%%%%%%
%
%                   Lemma
%
%%%%%%%%%%%%%%%%%%%%%%%%%%%%%%%%%%%%%%%%%%%%%%%%%%%%%%%%%%%%%%%%%%%%%%%%%%%%%%%%%%%%%%%%%%%%%
\begin{lemma}\label{K_wEstimate}There exists constants $C>0$ such that
\begin{eqnarray*}
K^{\varpi,x}_{\varpi}(v,v^{\prime})\leq C\{|v-v^{\prime}|+|v-v^{\prime}|^{-1}\}e^{-\frac{1}{8}|v-v^{\prime}|^2
-\frac{1}{8}\frac{||v-\varpi\times x|^2-|v^{\prime}-\varpi\times x|^2|^2}{|v-v^{\prime}|^2}},
\end{eqnarray*}
and
\begin{eqnarray*}\label{45}
&&\int_{\mathbb{R}^3} \{|v-v^{\prime}|+|v-v^{\prime}|^{-1}\}e^{-\frac{1}{8}|v-v^{\prime}|^2
-\frac{1}{8}\frac{||v-\varpi\times x|^2-|v^{\prime}-\varpi\times x|^2|^2}{|v-v^{\prime}|^2}}
\frac{w_{\varpi}(v)}{w_{\varpi}(v^{\prime})}dv^{\prime}\cr
&&\qquad\leq \frac{C}{(1+|v-\varpi\times x|)}.
\end{eqnarray*}
\end{lemma}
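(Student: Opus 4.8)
The plan is to reduce both assertions, by a single velocity translation, to the classical Grad--Guo kernel estimates for the uniform Maxwellian, and then to run the standard weighted-integral argument. First, observe that $m_{\varpi}(x,\cdot)$ is nothing but the normalized Maxwellian recentered at the fixed vector $\varpi\times x$, and that the collision operator $Q$ is covariant under velocity translations, since $B(v-u,\omega)$ and the collision map $(v,u)\mapsto(v',u')$ depend only on relative velocities. Consequently, as already recorded in \eqref{proeprites of L},
\[
K^{\varpi,x}(v,v')=e^{\frac{|\varpi\times x|^2}{2}}\,\mathcal{K}^{0,x}\bigl(v-\varpi\times x,\ v'-\varpi\times x\bigr),
\]
where $\mathcal{K}^{0,x}(\cdot,\cdot)=k(\cdot,\cdot)$ is the ($x$-independent) kernel of the classical linearized Boltzmann operator about $\frac{1}{\sqrt{(2\pi)^3}}e^{-|v|^2/2}$. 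Since $\Omega$ is bounded, the prefactor $e^{|\varpi\times x|^2/2}$ is bounded by a fixed constant, and $|(v-\varpi\times x)-(v'-\varpi\times x)|=|v-v'|$; hence it suffices to prove $k(v,v')\le C\{|v-v'|+|v-v'|^{-1}\}e^{-\frac{1}{8}|v-v'|^2-\frac{1}{8}(|v|^2-|v'|^2)^2/|v-v'|^2}$ and then substitute $v\mapsto v-\varpi\times x$, $v'\mapsto v'-\varpi\times x$.

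For the pointwise bound on $k$, I would split it in the usual way, $|k(v,v')|\le k_1(v,v')+k_2(v,v')$, into a loss-type kernel and a gain-type kernel. The loss kernel is explicit: $k_1(v,v')\lesssim |v-v'|^{\gamma}\sqrt{M(v)M(v')}\lesssim \{1+|v-v'|\}\,e^{-\frac{1}{4}(|v|^2+|v'|^2)}$, which is trivially dominated by the claimed right-hand side. For the gain kernel $k_2$ one uses the Carleman-type representation as an integral over the hyperplane $\{w:(w-v)\cdot(w-v')=0\}$; parametrizing that plane and completing the square in the Gaussian exponent produces exactly the factor $e^{-\frac{1}{8}(|v|^2-|v'|^2)^2/|v-v'|^2}$, while the Jacobian contributes the prefactor $|v-v'|^{-1}$. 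This is Grad's lemma in the sharpened form used in \cite{GuoConti}.

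For the weighted integral, substitute $\tilde v=v-\varpi\times x$, $\tilde v'=v'-\varpi\times x$; then $w_{\varpi}(v)/w_{\varpi}(v')=(1+|\tilde v|^2)^{\beta}/(1+|\tilde v'|^2)^{\beta}$, the integrand becomes independent of $x$, and we must bound
\[
\int_{\mathbb{R}^3}\{|\tilde v-\tilde v'|+|\tilde v-\tilde v'|^{-1}\}\,e^{-\frac{1}{8}|\tilde v-\tilde v'|^2-\frac{1}{8}\frac{(|\tilde v|^2-|\tilde v'|^2)^2}{|\tilde v-\tilde v'|^2}}\,\frac{(1+|\tilde v|^2)^{\beta}}{(1+|\tilde v'|^2)^{\beta}}\,d\tilde v'.
\]
I would split the $\tilde v'$-integral into $|\tilde v'|\ge 2|\tilde v|$, $|\tilde v'|\le \frac{1}{2}|\tilde v|$, and $\frac{1}{2}|\tilde v|\le|\tilde v'|\le 2|\tilde v|$. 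On the first two regions $|\tilde v-\tilde v'|\gtrsim\max(|\tilde v|,|\tilde v'|)$, so the Gaussian factor $e^{-\frac{1}{8}|\tilde v-\tilde v'|^2}$ dominates the polynomial weight ratio and the volume factors, giving a contribution $\le Ce^{-c|\tilde v|^2}\le C/(1+|\tilde v|)$. On the middle region the weight ratio is $\le C_{\beta}$, so it remains to show $\int_{\mathbb{R}^3}\{|\eta|+|\eta|^{-1}\}e^{-\frac{1}{8}|\eta|^2-\frac{1}{8}(2\tilde v\cdot\eta+|\eta|^2)^2/|\eta|^2}\,d\eta\le C/(1+|\tilde v|)$, where $\eta=\tilde v'-\tilde v$ so that $|\tilde v'|^2-|\tilde v|^2=2\tilde v\cdot\eta+|\eta|^2$. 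Decomposing $\eta=\eta_{\parallel}\hat{\tilde v}+\eta_{\perp}$ and integrating first in $\eta_{\parallel}$, the sharp Gaussian decay in $\eta_{\parallel}$ (of width $\sim|\eta_{\perp}|/(1+|\tilde v|)$ near the minimum of the quadratic exponent) produces a factor $\lesssim 1/(1+|\tilde v|)$, after which the remaining $\eta_{\perp}$-integral---including the singularity $|\eta|^{-1}$, which is integrable in $\mathbb{R}^3$---converges to a finite constant.

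The genuinely technical steps are the Carleman representation and square-completion for $k_2$, and the directional integration that extracts $1/(1+|v-\varpi\times x|)$ from the quadratic exponent; both are classical and carried out in detail in \cite{GuoConti}. The rotational structure adds nothing essentially new here: every step reduces, via the single translation $v\mapsto v-\varpi\times x$ together with the boundedness of $\Omega$, to the corresponding statement for the uniform Maxwellian, and the resulting constants are uniform in $x$---which is precisely what the later $L^{\infty}$ analysis requires.
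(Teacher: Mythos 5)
Your proposal is correct and follows the same basic strategy as the paper's proof: use the translation identity in \eqref{proeprites of L} to recenter the kernel at $v-\varpi\times x$, bound the pointwise kernel by the classical Grad/Carleman estimate, and then obtain the decay $(1+|v-\varpi\times x|)^{-1}$ by decomposing $\eta=v-v'$ into components parallel and perpendicular to $v-\varpi\times x$ and integrating first in $\eta_{\parallel}$. The two executions do differ in a way worth noting. You split the $v'$-integral into three dyadic regions to control the weight ratio; the paper avoids this entirely by observing at the outset that $w_{\varpi}(v)/w_{\varpi}(v')\le C(1+|v-v'|^2)^{|\beta|}$, a polynomial in $|\eta|$ that is absorbed into the Gaussian factor $e^{-c|\eta|^2}$, so a single integral suffices. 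More importantly, for the $\eta_{\parallel}$-integration you invoke a heuristic ``sharp Gaussian width $\sim|\eta_{\perp}|/(1+|\tilde v|)$ near the minimum of the quadratic exponent,'' which is correct but leaves the completion of the square and the location of the shifted minimum unquantified. The paper sidesteps this by a one-line AM--GM bound: after expanding $-\tfrac18|\eta|^2-\tfrac18\bigl||\eta|^2-2(v-\varpi\times x)\cdot\eta\bigr|^2/|\eta|^2$ and applying AM--GM to $\tfrac{3}{16}|\eta|^2+\tfrac12|(v-\varpi\times x)\cdot\eta|^2/|\eta|^2$, the exponent is dominated by $-C|\eta|^2-C|(v-\varpi\times x)\cdot\eta|$, after which $\int e^{-C|v-\varpi\times x||\eta_{\parallel}|}\,d\eta_{\parallel}$ yields the factor $|v-\varpi\times x|^{-1}$ with no Gaussian-width analysis needed. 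Your argument is sound, but if you tighten the $\eta_{\parallel}$-integration step the paper's AM--GM simplification is the cleanest route; the region-splitting can be deleted.
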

\begin{proof}
We first notice that for some $C_{\rho,\beta}>0$,
\begin{eqnarray*}
\left|\frac{w_{\varpi}(v)}{w_{\varpi}(v^{\prime})}\right|\leq C\{1+|v-v^{\prime}|^2\}^{|\beta|}.
\end{eqnarray*}
Let $v-v^{\prime}=\eta$ and $v^{\prime}=v-\eta$. We now compute as
\begin{eqnarray*}
&&-\frac{1}{8}|v-v^{\prime}|^2-\frac{1}{8}\frac{||v-\varpi\times x|^2-|v^{\prime}-\varpi\times x|^2|^2}{|v-v^{\prime}|^2}\cr
&&\qquad=-\frac{1}{8}|\eta|^2-\frac{1}{8}\frac{||\eta|^2-2(v-\varpi\times x)\cdot\eta|^2}{|\eta|^2}\cr%-\theta\{|v-\varpi\times x-\eta|^2-|v|^2\}\cr
&&\qquad=-\frac{1}{4}|\eta|^2+\frac{1}{2}(v-\varpi\times x)\cdot\eta-\frac{1}{2}\frac{|(v-\varpi\times x)\cdot\eta|^2}{|\eta|^2}\cr
&&\qquad=-\frac{1}{16}|\eta|^2+\frac{1}{2}(v-\varpi\times x)\cdot\eta
-\Big\{\frac{3}{16}|\eta|^2+\frac{1}{2}\frac{|(v-\varpi\times x)\cdot\eta|^2}{|\eta|^2}\Big\}\cr
&&\qquad\leq-\frac{1}{8}|\eta|^2-\frac{1}{2}\Big(\sqrt{\frac{3}{2}}-1\Big)|(v-\varpi\times x)\cdot\eta|\cr
%&&\qquad\leq-\frac{1}{6}|\eta|^2-\frac{1}{8}\frac{|(v-\varpi\times x)\cdot\eta|^2}{|\eta|^2}\cr%-\theta\{|\eta|^2-2v\cdot\eta\}\cr
&&\qquad\leq-C\Big\{|\eta|^2+\big|(v-\varpi\times x)\cdot\eta\big|\Big\},
\end{eqnarray*}
%Since $\theta<\frac{1}{4}$, the discriminant of the above quadratic form of $|\eta|$ $\frac{v\cdot \eta}{|\eta|}$ is
%\begin{eqnarray*}
%\Delta=\left(\frac{1}{2}+2\theta\right)^{2}-2\left(-\theta-\frac{1}{4}\right)=4\theta^2-\frac{1}{4}<0.
%\end{eqnarray*}
for some constant $C>0$.
%Here we used\begin{eqnarray*}
%\frac{3}{16}|\eta|^2+\frac{1}{2}\frac{|\{v-\varpi\times x\}\cdot\eta|^2}{|\eta|^2}\geq \frac{1}{2}\sqrt{\frac{3}{2}}|(v-\varpi\times x)\cdot\eta|.
%\end{eqnarray*}
Hence it is sufficient to estimate
\begin{eqnarray*}\label{45}
I=\int_{\mathbb{R}^3} \Big\{1+\frac{1}{|\eta|}\Big\}e^{-C|\eta|^2}e^{-C|(v-\varpi\times x)\cdot\eta|}d\eta.
\end{eqnarray*}
For $|v-\varpi\times x|\leq 1$, we have
\begin{eqnarray*}
I\leq\int_{\mathbb{R}^3} \Big\{1+\frac{1}{|\eta|}\Big\}e^{-C|\eta|^2}d\eta<\infty.
\end{eqnarray*}
%\begin{eqnarray*}&&-\frac{1-\varepsilon}{8}|\eta|^2-\frac{1-\varepsilon}{8}\frac{||\eta|^2-2v\cdot\eta|^2}{|\eta|^2}-\theta\{|v-\eta|^2-|v|^2\}\cr
%&&\leq -C_{\theta}\left\{|\eta|^2+\frac{|v\cdot\eta|^2}{|\eta|^2}\right\}\cr
%&&\leq-C_{\theta}\left\{\frac{|\eta|^2}{2}+\left(\frac{|\eta|^2}{2}+\frac{|v\cdot\eta|^2}{|\eta|^2}\right)\right\}\cr
%&&\leq-C_{\theta}\left\{\frac{|\eta|^2}{2}|v\cdot\eta|\right\}.\end{eqnarray*}
For $|v-\varpi\times x|\geq 1$, we make a change of variable $\eta_{\|}=\{\eta\cdot \frac{v-\varpi\times x}{|v-\varpi\times x|}\}\frac{v-\varpi\times x}{|v-\varpi\times x|}$, which gives
$\eta_{\perp}=\eta-\eta_{\|}$ so that $|(v-\varpi\times x)\cdot \eta|=|v-\varpi\times x||\eta_{\|}|$ and $|v-v^{\prime}|\geq|\eta_{\perp}|$.
%We can %absorb $\{1+|\eta|^2\}^{|\beta|}$, $|\eta|\{1+|\eta|^2\}^{|\beta|}$ by $e^{-\frac{C|\eta|^2}{4}}$, and bound the integral by
\begin{eqnarray*}\begin{split}
I&\leq C_{\beta}\int_{\mathbb{R}^2}\left(\frac{1}{|\eta_{\perp}|}+1\right)e^{-\frac{C}{4}|\eta|^2}
\left\{\int^{\infty}_{0}e^{-C|v-\varpi\times x|\times|v_{\|}|}d|\eta_{\|}|\right\}d\eta_{\perp}\cr
&= \frac{C_{\beta}}{|v-\varpi\times x|}\int_{\mathbb{R}^2}\left(\frac{1}{|\eta_{\perp}|}+1\right)e^{-\frac{C}{4}|\eta|^2}
\left\{\int^{\infty}_{0}e^{-C|y|}dy\right\}d\eta_{\perp}\cr
&\leq \frac{C_{\beta}}{|v-\varpi\times x|}.
%&&\hspace{2cm}.
\end{split}
\end{eqnarray*}
In the last line, we used the change of variable: $y=|v-\varpi\times x|\times|\eta_{\|}|$.
This completes the proof.
\end{proof}
The following lemma shows that the coercivity estimate holds uniformly with respect to $\varpi$ and $x$.
%%%%%%%%%%%%%%%%%%%%%%%%%%%%%%%%%%%%%%%%%%%%%%%%%%%%%%%%%%%%%%%%%%%%%%%%%%%%%%%%%%%%%%%%%%%%%%%%%%%%%%%%
%
%                            Lemma: Positivity
%
%%%%%%%%%%%%%%%%%%%%%%%%%%%%%%%%%%%%%%%%%%%%%%%%%%%%%%%%%%%%%%%%%%%%%%%%%%%%%%%%%%%%%%%%%%%%%%%%%%%%%%%%
\begin{lemma}[Positivity of $L^{\varpi}$]\label{positivity} There exists a constant $\delta_0>0$ which is independent of $x$ and $\varpi$ such that
\begin{eqnarray*}
(L^{\varpi}f,f)_{L^2_{x,v}}&\geq&\delta_0\int_{\mathbb{R}^3}e^{\frac{|\varpi\times x|^2}{2}}\|(I-P_{\varpi})f\|^2_{\nu_{\varpi}}dx\cr
&\geq& \delta_0\|(I-P_{\varpi})f\|^2_{\nu_{\varpi}}.
\end{eqnarray*}
\end{lemma}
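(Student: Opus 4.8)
The plan is to reduce the assertion to the classical Grad coercivity estimate for the \emph{standard} linearized collision operator, exploiting the structural identities in (\ref{proeprites of L}). The crucial observation is that, at each fixed $x$, the spatial dependence of $\mathcal{L}^{\varpi}$ (and hence of $L^{\varpi}$) is nothing but a rigid translation in velocity, $v\mapsto v-\varpi\times x$, so that the coercivity constant is forced to be independent of $x$ and $\varpi$. Concretely, since $L^{\varpi}=e^{|\varpi\times x|^2/2}\mathcal{L}^{\varpi}$,
\[
(L^{\varpi}f,f)_{L^2_{x,v}}=\int_{\Omega}e^{|\varpi\times x|^2/2}\,(\mathcal{L}^{\varpi}f(x,\cdot),f(x,\cdot))_{L^2_v}\,dx ,
\]
and it suffices to produce an absolute constant $\delta_0>0$, independent of $x$ and $\varpi$, with $(\mathcal{L}^{\varpi}g,g)_{L^2_v}\ge\delta_0\|(I-P_{\varpi})g\|_{\nu_{\varpi}}^2$ for every $g\in L^2_v$ and every frozen $(x,\varpi)$. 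Here $P_{\varpi}$ is understood as the $L^2_v$-orthogonal projection onto $\ker\mathcal{L}^{\varpi}=\mathrm{span}\{\sqrt{m}_{\varpi},v\sqrt{m}_{\varpi},|v|^2\sqrt{m}_{\varpi}\}$, the $x$-dependent scalar relating $\sqrt{\mu}_{\varpi}$ to $\sqrt{m}_{\varpi}$ in (\ref{MacroPro}) being irrelevant since it does not change this span.

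Next I would fix $(x,\varpi)$, set $c=\varpi\times x$, and introduce the velocity translation $(\tau_c g)(w)=g(w+c)$, a unitary operator on $L^2_v$. Because $m_{\varpi}(x,\cdot)=m_0(\cdot-c)$ with $m_0$ the unit Maxwellian, the identities (\ref{proeprites of L}) give $\nu^{\varpi}(x,v)=\nu^0(v-c)$ and $\mathcal{K}^{\varpi,x}(v,v')=\mathcal{K}^0(v-c,v'-c)$, where $\nu^0,\mathcal{K}^0$ are the collision frequency and kernel of the linearized operator $\mathcal{L}^0$ about $m_0$; a one-line change of variables in the integral operator then yields the conjugation relation $\tau_c\,\mathcal{L}^{\varpi}=\mathcal{L}^0\,\tau_c$, i.e. $\mathcal{L}^{\varpi}=\tau_{-c}\,\mathcal{L}^0\,\tau_c$. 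Consequently, by unitarity of $\tau_c$,
\[
(\mathcal{L}^{\varpi}g,g)_{L^2_v}=(\mathcal{L}^0\tau_c g,\tau_c g)_{L^2_v}.
\]
I would then invoke the classical coercivity estimate for $\mathcal{L}^0$ (see \cite{GuoConti}, \cite{Glassy}): there is an absolute $\delta_0>0$ with $(\mathcal{L}^0 h,h)_{L^2_v}\ge\delta_0\|(I-P_0)h\|_{\nu}^2$, where $P_0$ projects onto $\ker\mathcal{L}^0=\mathrm{span}\{\sqrt{m_0},v\sqrt{m_0},|v|^2\sqrt{m_0}\}$ and $\|h\|_{\nu}^2=\int_{\mathbb{R}^3}|h|^2(1+|v|^2)^{1/2}dv$. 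Since $\tau_c$ carries $\ker\mathcal{L}^{\varpi}$ onto $\ker\mathcal{L}^0$ (e.g. $\tau_c\sqrt{m}_{\varpi}=\sqrt{m_0}$, $\tau_c(v\sqrt{m}_{\varpi})=(v+c)\sqrt{m_0}$, $\tau_c(|v|^2\sqrt{m}_{\varpi})=|v+c|^2\sqrt{m_0}$) and is unitary, one obtains $P_0=\tau_c P_{\varpi}\tau_{-c}$, hence $(I-P_0)\tau_c g=\tau_c(I-P_{\varpi})g$; undoing the shift gives $\|(I-P_0)\tau_c g\|_{\nu}^2=\|(I-P_{\varpi})g\|_{\nu_{\varpi}}^2$. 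This establishes the fibered estimate, and integrating it in $x$ against $e^{|\varpi\times x|^2/2}\ge1$ produces both displayed inequalities in the statement.

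Essentially everything above is routine once the structural picture is in place; the single point that deserves care — and which is precisely why the lemma holds with a \emph{uniform} constant — is the verification of the conjugation relation $\mathcal{L}^{\varpi}=\tau_{-c}\mathcal{L}^0\tau_c$ and the induced identity $P_0=\tau_c P_{\varpi}\tau_{-c}$, which transfer the classical estimate with no deterioration as $|\varpi\times x|\to\infty$. I would also emphasize that the geometric assumption $\mathcal{(A)}$ is not used here at all: the statement is purely about the velocity integral operator at a frozen spatial point.
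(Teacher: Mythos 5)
Your proof is correct and follows essentially the same route as the paper's: you freeze $x$, shift velocities by $c=\varpi\times x$ to conjugate $\mathcal{L}^{\varpi}$ to the standard $\mathcal{L}^0$, invoke the classical coercivity for $\mathcal{L}^0$, transport the projection and the $\nu$-weight back through the shift, and finally integrate against $e^{|\varpi\times x|^2/2}\ge 1$. The paper carries this out at the level of quadratic forms via Fubini, while you phrase it as the operator conjugation $\mathcal{L}^{\varpi}=\tau_{-c}\mathcal{L}^0\tau_c$ with $\tau_c$ unitary and $P_0=\tau_c P_{\varpi}\tau_{-c}$, which is a cleaner way of saying the same thing.
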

\begin{proof}
We define the shift operator $\tau_y$ as
\[
\tau_y: f(x)\rightarrow f(x+y).
\]
Then we have from (\ref{proeprites of L}) and the Fubini's theorem,
\begin{eqnarray*}
\int\nu^{\varpi}f(v)dv&=&\int |v-v_*|e^{-\frac{|v_*-\varpi\times x|^2}{4}}f(v)d\omega dvdv_*\cr
&=&\int |v-v_*|e^{-\frac{|v_*|^2}{4}}f(v+\varpi\times x)d\omega dvdv_*\cr
&=&\nu^0\tau_{\varpi\times x}f(v),
\end{eqnarray*}
and
\begin{eqnarray*}
&&\int K^{\varpi,x}(v,v^{\prime})f(v)f(v^{\prime})dvdv^{\prime}
=\int K^{0,x}(v,v^{\prime})\tau_{\varpi\times x}f(v)\tau_{\varpi\times x}f(v^{\prime})dvdv^{\prime}.
\end{eqnarray*}
Hence we have
\begin{eqnarray*}
(\mathcal{L}_{\varpi}f,f)_{L^2_v}&=&-\int\nu^{\varpi}f(v)dv+\int_{R^6} K^{\varpi,x}(v,v^{\prime})f(v)f(v^{\prime})dvdv^{\prime}\cr
%&=&-\int \nu^0 f(v+\varpi\times x)dv+\int K^0(v,v^{\prime})f(v+\varpi\times x)f(v^{\prime}+\varpi\times x)dvdv^{\prime}\cr
%&=&-\int \nu^0 \tau_{\varpi\times x} f(v)dv+\int K^0(v,v^{\prime})\tau_{\varpi\times x}f(v)\tau_{\varpi\times x}f(v^{\prime})dvdv^{\prime}\cr
&=&\int \left\{-\nu^0\tau_{\varpi\times x}f+\int K^{0,x}(v,v^{\prime})\tau_{\varpi\times x}f(v^{\prime})dv^{\prime}\right\} \tau_{\varpi\times x}f(v)dv\cr
&=& (\mathcal{L}_0\tau_{\varpi\times x}f,\tau_{\varpi\times x}f)_{L^2_v}.
\end{eqnarray*}
By the positivity estimate of the standard linearized collision operator %of the global Maxwellian $e^{-\frac{|v|^2}{2}}$
(see \cite{Glassy}), there exists a constant $\delta_0$, independent of $x$ and $\varpi$, such that
\begin{eqnarray*}
\mathcal{L}_0(\tau_{\varpi\times x}f,\tau_{\varpi\times x}f)\geq\delta_0\|(I-P_0)\tau_{\varpi\times x}f\|^2_{\nu_0}.
\end{eqnarray*}
On the other hand, the Fubini's theorem gives
\begin{eqnarray*}
\|(I-P_0)\tau_{\varpi\times x}f\|^2_{\nu_0}&=&\int (I-P_{0})f(v+\varpi\times x)(1+|v|^2)^{\frac{1}{2}}dv\cr
&=&\int (I-P_{\varpi})f(v)(1+|v-\varpi\times x|^2)^{\frac{1}{2}}dv\cr
&=&\|(I-P_{\varpi\times x})f\|^2_{\nu_{\varpi}},
\end{eqnarray*}
which yields
\begin{eqnarray*}
(\mathcal{L}_{\varpi}f,f)_{L^2_v}\geq\delta_0\|(I-P_{\varpi\times x})f\|^2_{\nu_{\varpi}}.
\end{eqnarray*}
We then multiply $e^{\frac{|\varpi\times x|^2}{2}}$ and integrate with respect to $x$ to see
\begin{eqnarray*}
(L^{\varpi}f,f)_{L^2_{x,v}}&=&\int e^{\frac{|\varpi\times x|^2}{2}}(\mathcal{L}_{\varpi}f,f)_{L^2_v}dx\cr
&\geq&\delta_0\int_{\mathbb{R}^3}e^{\frac{|\varpi\times x|^2}{2}}\|(I-P_{\varpi})f\|^2_{\nu_{\varpi}}dx\cr
&\geq& \delta_0\|(I-P_{\varpi})f\|^2_{\nu_{\varpi}}.
\end{eqnarray*}
This completes the proof.
\end{proof}
%%%%%%%%%%%%%%%%%%%%%%%%%%%%%%%%%%%%%%%%%%%%%%%%%%%%%%%%%%%%%%%%%%%%%%%%%%%%%%%%%%%%%%%%%%%%%%%%%%%%%%%%%%%%%%%
%
%
%%%%%%%%%%%%%%%%%%%%%%%%%%%%%%%%%%%%%%%%%%%%%%%%%%%%%%%%%%%%%%%%%%%%%%%%%%%%%%%%%%%%%%%%%%%%%%%%%%%%%%%%%%%%%%%
%\begin{lemma}\label{MeasureZeroMap}Let $\kappa(y)$ be a real analytic function of $y\in R^n$ in a region such that $\kappa(y^0)$ is not identically zero.
%Then the set $\{y: \kappa(y)=0\}$ has zero n-dimensional Lebesque measure.\end{lemma}\begin{proof}See \cite{GuoConti}.\end{proof}
%%%%%%%%%%%%%%%%%%%%%%%%%%%%%%%%%%%%%%%%%%%%%%%%%%%%%%%%%%%%%%%%%%%%%%%%%%%%%%%%%%%%%%%%%%%%%%
%
%                                   Gamma Estimate
%
%%%%%%%%%%%%%%%%%%%%%%%%%%%%%%%%%%%%%%%%%%%%%%%%%%%%%%%%%%%%%%%%%%%%%%%%%%%%%%%%%%%%%%%%%%%%%%
\begin{lemma}\label{Gamma Estimate} Recall (\ref{Gamma}) and (\ref{Weight}). We have
\begin{eqnarray*}
|w_{\varpi}\Gamma(g_1,g_2)(v)|\leq C(|v-\varpi\times x|+1)^{\gamma}\|w_{\varpi}g_1\|_{\infty}\|w_{\varpi}g_2\|_{\infty}.
\end{eqnarray*}
\end{lemma}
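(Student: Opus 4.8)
The plan is to reduce the estimate for $\Gamma^{\varpi}$ to the translation-invariant bound for the standard bilinear operator $\Gamma^{0}$, exactly as in Lemma~\ref{positivity} and in the kernel identity \eqref{proeprites of L}. Recall that $\Gamma^{\varpi}(g_1,g_2) = \frac{1}{\sqrt{m_{\varpi}}} Q(\sqrt{m_{\varpi}}g_1, \sqrt{m_{\varpi}}g_2)$, and $m_{\varpi}(x,v) = (2\pi)^{-3/2} e^{-|v-\varpi\times x|^2/2}$ is a Gaussian centered at $\varpi\times x$. First I would perform the change of variable $v \mapsto v + \varpi\times x$ (and simultaneously $u\mapsto u+\varpi\times x$, $u'\mapsto u'+\varpi\times x$, $v'\mapsto v'+\varpi\times x$ in the collision integral), which is an isometry of $\mathbb{R}^3$ and hence preserves the collision kernel $B(v-u,\omega) = |v-u|^{\gamma}q_0(\theta)$ since both $|v-u|$ and $\cos\theta$ depend only on differences. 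Under this shift $m_{\varpi}(x,\cdot)$ becomes the standard Maxwellian $\mu_0(\cdot) = (2\pi)^{-3/2}e^{-|\cdot|^2/2}$, so one obtains the pointwise identity
\begin{eqnarray*}
\Gamma^{\varpi}(g_1,g_2)(x,v) = \Gamma^{0}\big(\tau_{\varpi\times x}g_1,\ \tau_{\varpi\times x}g_2\big)(v-\varpi\times x),
\end{eqnarray*}
where $\tau_y$ is the shift operator of Lemma~\ref{positivity} and $\Gamma^{0}$ is the usual bilinear perturbation operator around $\mu_0$.

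Next I would invoke the classical estimate for $\Gamma^{0}$ with the standard weight $w_0(v) = (1+|v|^2)^{\beta}$ (see e.g. \cite{GuoConti, Glassy}): there is $C>0$ such that
\begin{eqnarray*}
|w_0(v)\, \Gamma^{0}(h_1,h_2)(v)| \leq C(1+|v|)^{\gamma}\, \|w_0 h_1\|_{\infty}\, \|w_0 h_2\|_{\infty}.
\end{eqnarray*}
Applying this with $h_i = \tau_{\varpi\times x}g_i$ and evaluating at the point $v-\varpi\times x$ in place of $v$ gives
\begin{eqnarray*}
|w_0(v-\varpi\times x)\, \Gamma^{\varpi}(g_1,g_2)(x,v)| \leq C(1+|v-\varpi\times x|)^{\gamma}\, \|w_0\, \tau_{\varpi\times x}g_1\|_{\infty}\, \|w_0\, \tau_{\varpi\times x}g_2\|_{\infty}.
\end{eqnarray*}
Finally I would identify $w_0(v-\varpi\times x)$ with $w_{\varpi}(v) = (1+|v-\varpi\times x|^2)^{\beta}$ directly from definition \eqref{Weight}, and observe $\|w_0\,\tau_{\varpi\times x}g_i\|_{L^\infty_v} = \sup_v (1+|v|^2)^{\beta}|g_i(v+\varpi\times x)| = \sup_v (1+|v-\varpi\times x|^2)^{\beta}|g_i(v)| = \|w_{\varpi}g_i\|_{\infty}$; combining these yields the claimed bound with $(|v-\varpi\times x|+1)^{\gamma} \sim (1+|v-\varpi\times x|)^{\gamma}$.

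The only genuinely substantive point is verifying that the collision integrand transforms covariantly under the simultaneous shift of all four velocities — i.e. that $v',u'$ as defined by the $\omega$-parametrization commute with translation, which is immediate since $v'-u' = v-u-2[(v-u)\cdot\omega]\omega$ and both $v'$ and $u'$ are affine in $(v,u)$ with the same translation part. Everything else is bookkeeping. I expect the main (minor) obstacle to be stating the classical $\Gamma^0$ estimate in a form with the sharp gain factor $(1+|v|)^{\gamma}$ rather than merely $\nu_0(v) \sim (1+|v|)^{\gamma}$ for hard potentials $0\le\gamma\le1$; this is standard but should be cited carefully. One could alternatively prove the bound from scratch by splitting into $\Gamma^\varpi_{gain}$ and $\Gamma^\varpi_{loss}$ and estimating each via the Grad cut-off and Gaussian decay, but the reduction to $\Gamma^0$ is cleaner and consistent with the paper's strategy.
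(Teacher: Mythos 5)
Your proof is correct but takes a genuinely different route from the paper's. You reduce the estimate to the translation-invariant case via the shift identity $\Gamma^{\varpi}(g_1,g_2)(x,v)=\Gamma^{0}(\tau_{\varpi\times x}g_1,\tau_{\varpi\times x}g_2)(v-\varpi\times x)$ --- which does hold, since $x$ enters $\Gamma^{\varpi}$ only through $m_{\varpi}(x,v)=\mu_0(v-\varpi\times x)$ and the $\omega$-parametrized collision map commutes with simultaneous translation of all four velocities --- and then you cite the classical $\Gamma^0$ estimate with weight $w_0$. The paper instead proves the bound directly, splitting into $\Gamma^{\varpi}_{loss}$ and $\Gamma^{\varpi}_{gain}$: for the loss term it evaluates $\int|u-v|^{\gamma}\sqrt{m_{\varpi}}|g_2|\,du\lesssim(|v-\varpi\times x|+1)^{\gamma}\|w_{\varpi}g_2\|_{\infty}$, and for the gain term it uses the shifted energy conservation $|u'-\varpi\times x|^2+|v'-\varpi\times x|^2=|u-\varpi\times x|^2+|v-\varpi\times x|^2$ to get $w_{\varpi}(v)\le Cw_{\varpi}(v')w_{\varpi}(u')$ and then integrates the Gaussian. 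Your reduction is cleaner and mirrors the strategy the paper itself uses in Lemma~\ref{positivity}, at the cost of importing the $\Gamma^0$ estimate as a black box; the paper's direct computation is more self-contained but amounts to re-deriving the same Gaussian and conservation facts in the shifted variables. One small note: the distinction you flag between ``sharp gain factor $(1+|v|)^{\gamma}$'' and ``$\nu_0(v)$'' is immaterial here since $\nu_0(v)\sim(1+|v|)^{\gamma}$ for hard potentials $0\le\gamma\le1$, which is exactly the regime of the paper.
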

\begin{proof}
First consider the second term $\Gamma_{loss}$. We have
\begin{eqnarray*}
\int_{\mathbb{R}^3}|u-v|^{\gamma}|\sqrt{m}_{\varpi}g_2(x,u)|du\leq C\{|v-\varpi\times x|+1\}^{\gamma}\|wg_2\|_{\infty}.
\end{eqnarray*}
Hence $w\Gamma_{loss}[g_1,g_2]$ is bounded by
\begin{eqnarray*}
w_{\varpi}|g_1|\int_{\mathbb{R}^3}|u-v|^{\gamma}|\sqrt{m}_{\varpi}g_2(x,u)|du\leq Cw_{\varpi}(v)\{|v-\varpi\times x|+1\}^{\gamma}\|w_{\varpi}g_1\|_{\infty}.
\end{eqnarray*}
For $\Gamma_{gain}$, by $|u^{\prime}|^2+|v^{\prime}|^2=|u|^2+|v|^2$, $w_{\varpi}(v)\leq Cw_{\varpi}(v^{\prime})w_{\varpi}(u^{\prime})$, and
\begin{eqnarray*}
&&\int q_0(\theta)|u|^{\gamma}e^{-\frac{1}{4}|(v-\varpi\times x)u|^2}w_{\varpi}(v)|g_1(u^{\prime})||g_2(v^{\prime})|d\omega du\cr
&&\quad\leq C\int q_0(\theta)|u|^{\gamma}e^{-\frac{1}{4}|(v-\varpi\times x)-u|^2}w_{\varpi}(v^{\prime})w_{\varpi}(u^{\prime})|g_1(u^{\prime})||g_2(v^{\prime})|d\omega du\cr
&&\quad\leq C\|w_{\varpi}g_1\|_{\infty}\|w_{\varpi}g_2\|_{\infty}\int |u|^{\gamma}e^{-\frac{|u-\varpi\times x-v|^2}{2}}du\cr
&&\quad\leq C(|v-\varpi\times x|+1)^{\gamma}\|w_{\varpi}g_1\|_{\infty}\|w_{\varpi}g_2\|_{\infty}.
\end{eqnarray*}
This completes the proof.
\end{proof}

%%%%%%%%%%%%%%%%%%%%%%%%%%%%%%%%%%%%%%%%%%%%%%%%%%%%%%%%%%%%%%%%%%%%%%%%%%%%%%%%%%%%%%%%%%%%%%%%%%%%%%%%%%%%%%%%%%%%%%%%%%%%%%%%%
% %%%%%%%%%%%%%%%%%%%%%%%%%%%%%%%%%%%%%%%%%%%%%%%%%%%%%%%%%%%%%%%%%%%%%%%%%%%%%%%%%%%%%%%%%%%%%%%%%%%%%%%%%%%%%%%%%%%%%%%%%%%%%%%
% %
% %
% %                            Section L2 decay theory
% %
% %
% %%%%%%%%%%%%%%%%%%%%%%%%%%%%%%%%%%%%%%%%%%%%%%%%%%%%%%%%%%%%%%%%%%%%%%%%%%%%%%%%%%%%%%%%%%%%%%%%%%%%%%%%%%%%%%%%%%%%%%%%%%%%%%%
%%%%%%%%%%%%%%%%%%%%%%%%%%%%%%%%%%%%%%%%%%%%%%%%%%%%%%%%%%%%%%%%%%%%%%%%%%%%%%%%%%%%%%%%%%%%%%%%%%%%%%%%%%%%%%%%%%%%%%%%%%%%%%%%%
\section{$L^2$ decay theory}
In this section, we study the $L^2$ estimate of the linear Boltzmann equation:
\begin{eqnarray}\label{LinearBE}
\partial_tf+v\cdot\nabla_x f+L^{\varpi}f=0,\quad f(0,x,v)=f_0(x,v).
\end{eqnarray}
We define the boundary integration for $g(x,v),~ x\in \partial \Omega$
\begin{eqnarray*}
\int_{\gamma_{\pm}}gd\gamma=\int_{\pm v\cdot n(x)>0}|n(x)\cdot v|dS_xdv,
\end{eqnarray*}
where $dS_x$ is the standard surface measure on $\partial \Omega$. We also define
\begin{eqnarray*}
\|h\|_{\gamma}=\|h\|_{\gamma_+}+\|h\|_{\gamma_-}
\end{eqnarray*}
to be the $L^2(\gamma)$ with respect to the measure $|n(x)\cdot v|dS_xdv$.
%For fixed $x\in \partial\Omega$, denote the boundary inner product over $\gamma_{\pm}$ in $v$ as
%\begin{eqnarray*}\langle g_1,g_2\rangle_{\gamma}(t,x)=\int_{\pm v\cdot n(x)>0}g_1(t,x,v)g_2(t,x,v)|n(x)\cdot v|dv.\end{eqnarray*}
Our main theorem of this section is
%%%%%%%%%%%%%%%%%%%%%%%%%%%%%%%%%%%%%%%%%%%%%%%%%%%%%%%%%%%%%%%%%%%%%%%%%%%%%%%%%%%%%%%%%%%%
%
%                                   Theorem L2 Estimate
%
%%%%%%%%%%%%%%%%%%%%%%%%%%%%%%%%%%%%%%%%%%%%%%%%%%%%%%%%%%%%%%%%%%%%%%%%%%%%%%%%%%%%%%%%%%%%
\begin{theorem}\label{L2DecayTheorem}
Suppose $\Omega$ satisfies the assumption $\mathcal {(A)}$.
Let $f(t,x,v)\in L^2$ be the $($unique$)$ solution to the linear Boltzmann equation (\ref{LinearBE}) with trace $f_{\gamma}\in L^2_{loc}(\gamma)$.
We assume the conservation of mass, angular momentum
and energy (\ref{ConservationLaw}). Then there exists  $\lambda$ and $C>0$ such that
\begin{eqnarray*}
\sup_{0\leq t\leq\infty}\{e^{\lambda t}\|f(t)\|^2\}\leq 2\|f(0)\|^2.
\end{eqnarray*}
\end{theorem}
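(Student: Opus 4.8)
The plan is to run the standard hyperbolic-type energy argument of \cite{GuoConti} adapted to the $x$-dependent operator $L^\varpi$, using Lemma~\ref{positivity} as the replacement for the usual positivity estimate. The overall structure has two ingredients: (i) a basic energy identity showing $\|f(t)\|^2$ is nonincreasing with dissipation controlled by $\|(I-P_\varpi)f\|^2_{\nu_\varpi}$, and (ii) a finite-time estimate of the macroscopic part $P_\varpi f$ by the microscopic part $(I-P_\varpi)f$ (plus boundary contributions), which together give exponential decay. First I would multiply \eqref{LinearBE} by $f$, integrate over $\Omega\times\mathbb{R}^3$, and use the specular boundary condition \eqref{specularBoundaryC_f}: since $R(x)$ is an isometry fixing $n(x)$, the transport term $\int v\cdot\nabla_x f\, f$ reduces to a boundary integral $\tfrac12\int_{\partial\Omega\times\mathbb{R}^3} (v\cdot n)|f|^2$ which vanishes (the $\gamma_+$ and $\gamma_-$ contributions cancel under $v\mapsto R(x)v$). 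Combined with Lemma~\ref{positivity}, this yields
\begin{equation*}
\frac{1}{2}\frac{d}{dt}\|f(t)\|^2 + \delta_0\|(I-P_\varpi)f(t)\|^2_{\nu_\varpi} \leq 0 .
\end{equation*}

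The heart of the matter is then to show that, for some constant $C$ and any $t$,
\begin{equation*}
\int_0^{t}\|P_\varpi f(s)\|^2_{\nu_\varpi}\,ds \leq C\int_0^{t}\|(I-P_\varpi)f(s)\|^2_{\nu_\varpi}\,ds + (\text{lower order}),
\end{equation*}
i.e. a Poincaré-type inequality for the macroscopic coefficients $a,b,c$ in \eqref{MacroPro}. Here I would plug the decomposition $f=P_\varpi f+(I-P_\varpi)f$ into the weak form of the equation against well-chosen test functions (the standard choices built from $\sqrt{\mu_\varpi}$ times polynomials in $v$ and smooth $x$-cutoffs, as in \cite{GuoConti}), and extract a closed system of transport-type identities for $a,b,c$. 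Crucially, one must handle the zero modes: the obstruction is precisely the hydrodynamic fields that are \emph{not} controlled by the microscopic dissipation, namely those corresponding to conserved quantities. This is exactly where the three conservation laws \eqref{ConservationLaw} — mass, angular momentum, energy — are used to kill the kernel directions, via a contradiction/compactness argument (if the Poincaré inequality failed, a normalized sequence would converge to a nonzero element of $\ker L^\varpi$ that is stationary and has zero mass, angular momentum, energy, hence zero). The rotational character of $\mu_\varpi$ enters here: the relevant "rigid" stationary modes are spanned by the rotational field $x\times v$ weighted by $\sqrt{\mu_\varpi}$, which is annihilated precisely by the angular-momentum conservation law rather than by linear-momentum conservation as in the translation-invariant case.

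Once the Poincaré-type estimate is established, the conclusion is routine: adding a small multiple of it to the energy identity produces $\frac{d}{dt}\|f\|^2 + c\|f\|^2_{\nu_\varpi}\le 0$ on a suitable time interval, hence $\|f(t+T)\|^2\le e^{-\lambda T}\|f(t)\|^2$ for fixed $T$ and $\lambda>0$, and iterating in time together with the monotonicity of $\|f(t)\|^2$ upgrades this to $\sup_{0\le t\le\infty} e^{\lambda t}\|f(t)\|^2 \le 2\|f(0)\|^2$ after possibly shrinking $\lambda$. The main obstacle I anticipate is the macroscopic estimate: carrying out the test-function bookkeeping with the spatially varying weight $m_\varpi(x,v)=\frac{1}{\sqrt{(2\pi)^3}}e^{-|v-\varpi\times x|^2/2}$ means the "Gaussian" is sheared by $\varpi\times x$, so spatial derivatives hitting $\sqrt{\mu_\varpi}$ generate extra terms proportional to $\varpi\times x$; one must check these are absorbable (using the exponential factor $e^{|\varpi\times x|^2/2}$ that appears in Lemma~\ref{positivity} and the boundedness of $\Omega$), and that the boundary terms arising from integration by parts still vanish or are controlled by $\|f_\gamma\|_{\gamma_+}$ under specular reflection. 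A secondary technical point is justifying the integrations by parts and the use of the trace $f_\gamma\in L^2_{loc}(\gamma)$ at the low regularity available; this is handled exactly as in \cite{GuoConti} via an approximation argument, which I would invoke rather than reproduce.
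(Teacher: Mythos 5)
Your proposal is correct and follows essentially the same route as the paper: an energy identity via Lemma~\ref{positivity} and cancellation of boundary terms under specular reflection, a Poincar\'e-type macroscopic estimate (the paper's Proposition~\ref{PropI-P P Estimate}, proved precisely by the contradiction/compactness argument you sketch, with interior averaging-lemma compactness plus no-concentration estimates in time and near the boundary), and then a time-slicing/iteration argument to upgrade to exponential decay. Two small points where your description is imprecise: you initially gesture at a direct test-function derivation of a closed system for $a,b,c$, but the paper does \emph{not} do this --- it goes straight to the contradiction argument, as you then correctly pivot to; and the limit object $Z$ is not literally a stationary element of $\ker L^\varpi$ but a time-dependent purely macroscopic function solving $\partial_t Z + v\cdot\nabla_x Z = 0$ with the explicit polynomial form of Lemma~\ref{Lemma Max Explicit}, after which mass/energy conservation kills the time-dependent coefficients, the specular boundary condition reduces $Z$ to $\varpi\times x\cdot v\sqrt{m_\varpi}$, and finally the angular-momentum conservation law forces $\varpi=0$ --- so you have the right ingredients and the right key role for angular momentum, just a slightly different chain of eliminations.
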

For this, we establish the following proposition
%%%%%%%%%%%%%%%%%%%%%%%%%%%%%%%%%%%%%%%%%%%%%%%%%%%%%%%%%%%%%%%%%%%%%%%%%%%%%%%%%%%%%%%%%%%%%%
%
%                                     I-P vs P estimate
%
%%%%%%%%%%%%%%%%%%%%%%%%%%%%%%%%%%%%%%%%%%%%%%%%%%%%%%%%%%%%%%%%%%%%%%%%%%%%%%%%%%%%%%%%%%%%%%
\begin{proposition}\label{PropI-P P Estimate}
Suppose $\Omega$ satisfies the geometric assumption $\mathcal{(A)}$.
Then there exists $M>0$ such that for any solution $f(t,x,v)$ to the linearized Boltzmann equation (\ref{LinearBE})
satisfying the specular reflection condition (\ref{specularBoundaryC_f}) and the conservation laws of mass, angular momentum
and energy (\ref{ConservationLaw}), we have the following estimate:
\begin{eqnarray}\label{I-P P Estimate}
\int^1_0\|P_{\varpi}f(f)\|_{\nu_{\varpi}}\leq \int^1_0\|(I-P_{\varpi})f(s)\|_{\nu_{\varpi}}ds.
\end{eqnarray}
\end{proposition}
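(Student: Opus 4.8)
The plan is to follow the hyperbolic-type energy method of \cite{GuoConti}, adapting it to the spatially dependent operator $L^\varpi$ and the angular-momentum conservation law. I would argue by contradiction: suppose the estimate \eqref{I-P P Estimate} fails. Then there is a sequence of solutions $f^k$ to \eqref{LinearBE} satisfying the specular boundary condition and the conservation laws \eqref{ConservationLaw}, normalized so that
\[
\int_0^1 \|P_\varpi f^k(s)\|_{\nu_\varpi}^2 \, ds = 1, \qquad \int_0^1 \|(I-P_\varpi)f^k(s)\|_{\nu_\varpi}^2 \, ds \le \frac{1}{k}.
\]
Writing $f^k = P_\varpi f^k + (I-P_\varpi)f^k$ and expanding $P_\varpi f^k = a^k\sqrt{\mu}_\varpi + b^k\cdot v\sqrt{\mu}_\varpi + c^k|v|^2\sqrt{\mu}_\varpi$ as in \eqref{MacroPro}, the normalization says the macroscopic coefficients $(a^k,b^k,c^k)$ are bounded in $L^2((0,1)\times\Omega)$ while the microscopic part tends to zero in $L^2((0,1);L^2_{\nu_\varpi})$. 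Passing to a weak limit, one obtains $(a,b,c)$ with $\int_0^1\|(a,b,c)(s)\|^2 ds$ controlled, and the limiting $f = a\sqrt{\mu}_\varpi + b\cdot v\sqrt{\mu}_\varpi + c|v|^2\sqrt{\mu}_\varpi$ satisfies the free transport equation $(\partial_t + v\cdot\nabla_x)f = 0$ in the sense of distributions (the $L^\varpi f^k$ term drops since $L^\varpi$ annihilates $P_\varpi$ and kills the microscopic remainder in the limit), together with the specular boundary condition and the conservation laws.

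The heart of the argument is then to show that this limiting $f$ must vanish, contradicting $\int_0^1\|P_\varpi f(s)\|_{\nu_\varpi}^2 ds = 1$ (which should be inherited in the limit, or recovered via a compactness/averaging-lemma argument showing the convergence $P_\varpi f^k \to P_\varpi f$ is strong in $L^2_{loc}$). Plugging the macroscopic ansatz into $(\partial_t + v\cdot\nabla_x)f = 0$ and matching coefficients against the velocity moments $1, v_i, v_iv_j, |v|^2 v_i$ of $\sqrt{\mu}_\varpi$ gives an overdetermined first-order system for $(a,b,c)$ — the analogue of the "macroscopic equations" in \cite{GuoConti}. Solving it forces $c$ to be constant, $b$ to be an affine vector field of the special rotational form $b(x) = b_0 + \varpi_b \times x$ (the free parameters being a constant translation and a rotation), and $a$ to be determined accordingly; in other words $f$ lies in a finite-dimensional space spanned by the collision invariants $\sqrt{m}_\varpi$, $(x\times v)\cdot e_j\,\sqrt{m}_\varpi$, $v_j\sqrt{m}_\varpi$ type fields adapted to $\mu_\varpi$. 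Here the rotational structure of $\mu_\varpi$ and the geometric assumption $\mathcal{(A)}$ (rotational symmetry around $e_3$, so that $x\times e_3\cdot n(x)=0$ on $\partial\Omega$) enter: the specular boundary condition $f(t,x,v)=f(t,x,R(x)v)$ on $\gamma_-$ eliminates all the affine/rotational degrees of freedom except those compatible with the symmetry of $\Omega$, exactly as in the derivation of the rotational local Maxwellian itself.

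Finally, once $f$ is pinned down to this finite-dimensional space of steady collision invariants, the conservation laws \eqref{ConservationLaw} — orthogonality against $\sqrt{m}_\varpi$, against $x\times v\,\sqrt{m}_\varpi$, and against $|v|^2\sqrt{m}_\varpi$ — form a linear system whose only solution is the trivial one, so $f \equiv 0$. This yields the contradiction and proves the proposition. I expect the main obstacle to be the step establishing strong (not merely weak) convergence of the macroscopic parts, so that the normalization $\int_0^1\|P_\varpi f\|_{\nu_\varpi}^2 = 1$ survives in the limit; this is where one needs an averaging-lemma argument, or alternatively a direct elliptic-type estimate bounding $\|P_\varpi f^k\|$ in terms of $\|(I-P_\varpi)f^k\|$ plus boundary and time-boundary contributions, exploiting that $f^k$ solves the transport equation. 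The spatial dependence of $L^\varpi$ does not obstruct this, since by Lemma \ref{positivity} (and the conjugation $f \mapsto \tau_{\varpi\times x}f$ used there) the relevant coercivity and moment identities reduce, pointwise in $x$, to the classical uniform-Maxwellian case.
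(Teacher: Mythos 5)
Your proposal follows the same overall strategy as the paper's proof: a contradiction argument with a normalized sequence $Z_k$ so that $\int_0^1\|P_\varpi Z_k\|^2_{\nu_\varpi}\,ds = 1$ while $\int_0^1\|(I-P_\varpi)Z_k\|^2_{\nu_\varpi}\,ds\to 0$, a weak limit $Z$ solving the free transport equation, the explicit macroscopic polynomial structure of $Z$, and then the boundary condition plus conservation laws to force $Z\equiv 0$. You also correctly identify the strong convergence of the macroscopic parts (to preserve the normalization in the limit) as the crux.

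Where the proposal is thinner than the actual argument: you treat the strong convergence as a single step handled by ``an averaging-lemma argument or an elliptic-type estimate.'' In fact the averaging lemma only delivers \emph{interior} compactness (the paper's Lemma \ref{Interior Cpt}), and the bulk of the work in this section is devoted to ruling out concentration of $Z_k$ in time (a separate $L^\infty$-in-time bound, Lemma \ref{NoTimeCon}) and, more delicately, concentration near the boundary $\partial\Omega$. The latter is not an elliptic estimate but a hyperbolic energy estimate over a thin shell $\Omega\setminus\Omega_{\varepsilon^4}$ on a short time window $[s-\varepsilon^2,s]$, using specially designed outward/inward cut-offs $\tilde\chi_\pm$, the Velocity Lemma, and Ukai's trace theorem; this is where strict convexity and analyticity (assumption $\mathcal{(A)}$) genuinely enter. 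Without these pieces, the claim that the normalization survives the limit is not established.

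A second, smaller point: after the boundary condition forces $c_0=c_1=b_0=0$, the residual mode is exactly $Z=\varpi_Z\times x\cdot v\,\sqrt{m}_\varpi$ for some vector $\varpi_Z$ (as in the Appendix derivation of the rotational local Maxwellian), and it is specifically the \emph{angular-momentum} conservation law in \eqref{ConservationLaw} — not a generic finite-dimensional linear system — that kills this mode by giving $\int(\varpi_Z\times x\cdot v)^2\sqrt{m}_\varpi = 0$. Spelling out that the rotational symmetry of $\Omega$ creates a rigid-rotation zero mode and that the angular-momentum constraint is exactly what removes it would sharpen your last step considerably.
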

\begin{proof}
We first show that Proposition \ref{PropI-P P Estimate} implies Theorem \ref{L2DecayTheorem}.\newline
\noindent{\bf Proof of Theorem \ref{L2DecayTheorem}}\newline
\noindent (i) The case of $s\in [N, t]$: We multiply $f$ to (\ref{LinearBE}) and integrate on $[N, t]$ to obtain
\begin{eqnarray*}
\|f(t)\|^2+2\int^t_N(L^{\varpi}f,f)ds=\|f(N)\|^2,
\end{eqnarray*}
which gives from Lemma \ref{positivity}
\begin{eqnarray}\label{ttoN}
\|f(t)\|\leq\|f(N)\|.
\end{eqnarray}
\noindent (ii) The case of $s\in [0, N]$: Let $f$ be a solution to (\ref{LinearBE}), then $e^{\lambda t}f$ satisfies
\begin{eqnarray}\label{LBEexp}
\{\partial_t+v\cdot \nabla_x+L\}\{e^{\lambda t}f\}-\lambda e^{\lambda t}f=0.
\end{eqnarray}
We multiply $e^{\lambda t}f$ and integrate over $0\leq s\leq N$. Then we have from
$f_{\gamma}\in L^2_{loc}(R_+; L^2(\gamma))$
\begin{eqnarray*}
&&e^{2\lambda N}\|f(N)\|^2+2\int^N_0e^{2\lambda s}(L^{\varpi}f,f)ds-\lambda\int^N_0e^{2\lambda s}\|f(s)\|^2ds\cr
&&\qquad=\|f(0)\|^2+\int^N_0\int_{\gamma_-}e^{2\lambda s}f^2(s)d\gamma ds-\int^N_0\int_{\gamma_+}e^{2\lambda s}f^2(s)d\gamma ds\cr
&&\qquad=\|f(0)\|^2.
\end{eqnarray*}
In the last line, we used the fact that the total boundary contribution vanishes for specular boundary reflection:
\begin{eqnarray*}
\int^N_0\int_{\gamma_-}e^{2\lambda s}f^2(s)d\gamma ds=\int^N_0\int_{\gamma_+}e^{2\lambda s}f^2(s)d\gamma ds.
\end{eqnarray*}
Dividing the time interval into $\cup^{N-1}_{k=0}[k,k+1)$ and letting $f_k(s,x,v)\equiv f(k+s,x,v)$ for $k=0,1,2,\cdots$, we deduce
\begin{eqnarray}\label{N-1to1}
&&e^{2\lambda N}\|f(N)\|^2+\sum^{N-1}_{k=0}\int^1_0\left\{2e^{2\lambda (k+s)}(L^{\varpi}f_k(s),f_k(s))ds-\lambda e^{2\lambda (k+s)}\|f(s)\|^2\right\}ds
=\|f(0)\|^2.%+\sum^{N-1}_{k=0}\Big\{\int^1_0\int_{\gamma_-}e^{2\lambda (k+s)}f^2_k(s)d\gamma ds-\int^1_0\int_{\gamma_+}e^{2\lambda (k+s)}f^2_k(s)d\gamma ds\Big\}.
\end{eqnarray}
We then apply the positivity estimate in Lemma \ref{positivity} to get
\begin{eqnarray*}
\int^1_0e^{2\lambda (k+s)}(L^{\varpi}f_k(s),f_k(s))ds
&\geq&\delta_0\int^1_0e^{2\lambda (k+s)}\|(I-P_{\varpi})f_k\|^2_{\nu_{\varpi}}ds\cr
&\geq&\delta_0\int^1_0e^{2\lambda k}\|(I-P_{\varpi})f_k\|^2_{\nu_{\varpi}}ds.
\end{eqnarray*}
We employ Proposition \ref{PropI-P P Estimate} as follows
\begin{eqnarray*}
\begin{split}
\int^1_0e^{2\lambda k}\|(I-P_{\varpi})f_k\|^2_{\nu_{\varpi}}ds
&=\frac{\delta_0}{2}\int^1_0e^{2\lambda k}\|(I-P_{\varpi})f_k\|^2_{\nu_{\varpi}}ds+\frac{\delta_0}{2}\int^1_0e^{2\lambda k}\|(I-P_{\varpi})f_k\|^2_{\nu_{\varpi}}ds\cr
&\geq\frac{\delta_0}{2}\int^1_0e^{2\lambda k}\|(I-P_{\varpi})f_k\|^2_{\nu_{\varpi}}ds+\frac{\delta_0}{2M}\int^1_0e^{2\lambda k}\|P_{\varpi}f_k\|^2_{\nu_{\varpi}}ds\cr
&\geq\frac{1}{2}\min\left\{\frac{\delta_0}{M}, \delta_0\right\}e^{-2\lambda}\int^t_0e^{2\lambda(k+s)}\|f\|^2_{\nu_{\varpi}}ds.
\end{split}
\end{eqnarray*}
%Note that we have used $e^{2\lambda (k+s)}\geq e^{2\lambda k}=e^{-2\lambda}e^{\lambda(k+s)}$.
We substitute this into (\ref{N-1to1}) to get
\begin{eqnarray*}
&&e^{2\lambda N}\|f(N)\|^2+
\left\{\frac{1}{2}\min\Big\{\frac{\delta_0}{M},~\delta_0\Big\}e^{-2\lambda}-C_{\nu_{\varpi}}\lambda\right\}\sum^{N-1}_{k=0}
\int^1_0e^{2\lambda(k+s)}\|f\|^2_{\nu_{\varpi}}
=\|f(0)\|^2.
%+\sum^{N-1}_{k=0}\Big\{\int^1_0\int_{\gamma_-}e^{2\lambda (k+s)}f^2_k(s)d\gamma ds-\int^1_0\int_{\gamma_+}e^{2\lambda (k+s)}f^2_k(s)d\gamma ds\Big\}.
\end{eqnarray*}
Here we have used $\|\cdot\|\leq C_{\nu_{\varpi}}\|\cdot\|_{\nu_{\varpi}}$. We choose $\lambda>0$ small enough such that 
$\frac{1}{2}\min\big\{\frac{\delta_0}{M}, \delta_0\big\}e^{-2\lambda}-C_{\nu_{\varpi}}\lambda>0$,
to obtain
\begin{eqnarray}\label{Nto0}
e^{2\lambda N}\|f(N)\|^2\leq \|f(0)\|^2.
\end{eqnarray}
Combining (\ref{ttoN}) of Case i) and (\ref{Nto0}) of Case ii), we have
\begin{eqnarray*}
e^{2\lambda t}\|f(t)\|^2\leq e^{2\lambda t}\|f(N)\|^2\leq e^{2\lambda (t-N)}e^{2\lambda N}\|f(t)\|^2\leq e^{2\lambda (t-N)}\|f(0)\|^2\leq \|f(0)\|^2.
\end{eqnarray*}
This completes the proof.
\end{proof}
\subsection{Proof of Proposition \ref{I-P P Estimate}} We prove Proposition \ref{I-P P Estimate}  by a contradiction argument.
For this, we suppose in the contrary that for each $k$, there exists $f_k$ which satisfies
the conservation laws (\ref{ConservationLaw}), the specular boundary condition (\ref{specularBoundaryC_f}) and
\begin{eqnarray}\label{Contradiction}
\int^1_0\|P_{\varpi}f_k\|^2_{\nu_{\varpi}}ds\geq k\int ^1_0\|(I-P_{\varpi})f_k\|^2_{\nu_{\varpi}}ds.
\end{eqnarray}
We normalize it as
\begin{eqnarray*}%\label{Normalized}
Z_k=\frac{f_k}{\sqrt{\int^1_0\|P_{\varpi}f_k\|^2_{\nu_{\varpi}}ds}},
\end{eqnarray*}
which satisfies
\begin{eqnarray}\label{NormalizedZ=1}
\int^1_0\|P_{\varpi}Z_k\|^2_{\nu_{\varpi}}ds=1.
\end{eqnarray}
From $\{\partial_t+v\cdot \nabla+L^{\varpi}\}f_k=0$, we have
\begin{eqnarray}\label{Nor Z Transport}
\{\partial_t+v\cdot \nabla+L^{\varpi}\}Z_k=\frac{\{\partial_t+v\cdot \nabla+L^{\varpi}\}f_k}{\sqrt{\int^1_0\|P_{\varpi}f_k\|^2ds}}=0.
\end{eqnarray}
Dividing both sides of (\ref{Contradiction}), we obatin
\begin{eqnarray}\label{Norm (I-P)Z}
\int^1_0\|(I-P_{\varpi})Z_k\|^2_{\nu_{\varpi}}ds\leq\frac{1}{k}.
\end{eqnarray}
From (\ref{NormalizedZ=1}) and (\ref{Norm (I-P)Z}), there exists $Z(t,x,v)$ such that
\begin{eqnarray}\label{such that}
Z_k\rightarrow Z\mbox{ weakly in }\int^1_0\|\cdot\|^2_{\nu_{\varpi}}ds,
\end{eqnarray}
and
\begin{eqnarray}\label{(I-P)Z_k=0}
\int^1_0\|(I-P_{\varpi})Z_k\|^2_{\nu_{\varpi}}ds\rightarrow 0.
\end{eqnarray}
On the other hand, it is straightforward to verify
\begin{eqnarray}\label{ontheotherhand}
P_{\varpi}Z_k\rightarrow P_{\varpi}Z\mbox{ weakly in }\int^1_0\|\cdot\|^2_{\nu_{\varpi}}ds.
\end{eqnarray}
Due to (\ref{such that}), (\ref{(I-P)Z_k=0}) and (\ref{ontheotherhand}), we have
%\begin{eqnarray*}P_{\varpi}Z_k\rightarrow P_{\varpi}Z \mbox{ weakly }\end{eqnarray*}and\begin{eqnarray*}
%(I-P_{\varpi})Z=0\end{eqnarray*}so that
\begin{eqnarray*}
Z(t,v,x)=\{a(t,x)+b(t,x)\cdot v+c(t,x)|v|^2\}\sqrt{m}_{\varpi}.
\end{eqnarray*}
It also implies, in the sense of distribution,
\begin{eqnarray}\label{Nor Z Transport2}
\partial_tZ+v\cdot \nabla Z=0.
\end{eqnarray}
We will first prove that, using the hyperbolic type energy method developed in \cite{GuoConti} that $Z_k$ converges strongly to $Z$ in $\int^1_0\|\cdot\|^2 ds$, and $\int^1_0\|Z\|^2ds\neq0$.
Then it will be shown that (\ref{Nor Z Transport2}) and the specular reflection boundary condition imply $Z=0$, which is a
 contradiction. First, we consider how does $Z$ look like.
\begin{lemma}\label{Lemma Max Explicit}\cite{GuoConti} There exist constants $a_0, c_1, c_2$, and constant vectors $b_0, b_1$ and $\varpi$
such that
\begin{eqnarray}\label{Max Explicit2}
\begin{split}
Z(t,x,v)=&\left(\left\{\frac{c_0}{2}|x|^2-b_0\cdot x+a_0\right\}
+\left\{-c_0tx-c_1x+\varpi\times x+b_0t+b_1\right\}\cdot v\right.\cr
&\left.+\left\{\frac{c_0t^2}{2}+c_1t+c_2\right\}|v|^2\right)\sqrt{m}_{\varpi}.
\end{split}
\end{eqnarray}
Moreover, these constants are finite:
\begin{eqnarray}\label{abcBounds}
|a_0|,|b_0|,|b_1|,|c_0|,|c_1|,|c_2|,|\varpi|<\infty.
\end{eqnarray}
\end{lemma}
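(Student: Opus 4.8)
The plan is to determine the explicit form of $Z$ by exploiting the fact that $Z$ is a local Maxwellian (i.e.\ lies in $\ker \mathcal L^\varpi$ pointwise in $(t,x)$) that additionally solves the free transport equation \eqref{Nor Z Transport2}. Write $Z = \{a(t,x) + b(t,x)\cdot v + c(t,x)|v|^2\}\sqrt{m}_\varpi$ with $\sqrt{m}_\varpi = (2\pi)^{-3/4} e^{-|v-\varpi\times x|^2/4}$. First I would substitute this ansatz into $\partial_t Z + v\cdot\nabla_x Z = 0$ and expand the derivative of $\sqrt{m}_\varpi$; since $\nabla_x \sqrt{m}_\varpi$ produces a factor linear in $v$ coming from $\nabla_x(\varpi\times x)$, the whole expression becomes a polynomial in $v$ (of degree $\le 3$) times $\sqrt{m}_\varpi$. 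Setting the coefficient of each monomial $v^{\alpha_1}v^{\alpha_2}v^{\alpha_3}$ (for $|\alpha|=0,1,2,3$) equal to zero gives a closed system of first-order PDEs for $a,b,c$ in the variables $(t,x)$.

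The key steps, in order: (1) Collect the degree-$3$ terms: these force $c$ to be independent of $x$, so $c=c(t)$. (2) The degree-$2$ terms then relate $\nabla_x$ of the ``$b$-type'' quantity to $\dot c$; because of the rotational shift this is exactly the statement that a certain vector field is a gradient plus a skew-symmetric (rotation) part, yielding $b(t,x) = -\dot c(t)\, x + (\text{skew matrix})x + (\text{vector})$, where the skew part is realized as $\varpi\times x$ for some constant vector $\varpi$ (here one uses that the antisymmetric $3\times3$ matrices are exactly the $v\mapsto \varpi\times v$). (3) Feeding this back, the degree-$1$ terms give $\partial_t b + \nabla_x a = 0$ together with a constraint forcing $\dot c$ to be constant, i.e.\ $c(t)=\frac{c_0}{2}t^2 + c_1 t + c_2$; integrating then gives the displayed $v\cdot(\cdots)$ coefficient $-c_0 t x - c_1 x + \varpi\times x + b_0 t + b_1$. (4) Finally the degree-$0$ terms give $\partial_t a = 0$ after accounting for the quadratic-in-$x$ contribution, producing $a(x) = \frac{c_0}{2}|x|^2 - b_0\cdot x + a_0$. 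Assembling (1)--(4) yields exactly \eqref{Max Explicit2}.

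For the finiteness \eqref{abcBounds}: I would note that $Z$ is the weak limit in $\int_0^1\|\cdot\|_{\nu_\varpi}^2\,ds$ of the normalized sequence $Z_k$, so by weak lower semicontinuity $\int_0^1 \|Z\|_{\nu_\varpi}^2\,ds \le \liminf_k \int_0^1\|Z_k\|_{\nu_\varpi}^2\,ds$; and from \eqref{NormalizedZ=1} and \eqref{Norm (I-P)Z} the right-hand side is bounded (by $1 + \lim 1/k = 1$, roughly). Since the functions $1, x, |x|^2, v, x\otimes v, t, \dots$ multiplied by $\sqrt{m}_\varpi$ are linearly independent as functions on the fixed bounded set $[0,1]\times\Omega\times\mathbb R^3$, a finite $\int_0^1\|Z\|_{\nu_\varpi}^2$ forces each scalar/vector constant $a_0,b_0,b_1,c_0,c_1,c_2,\varpi$ to be finite. (One is comparing a finite-dimensional norm on the coefficient vector with the $L^2$ norm of $Z$; these are equivalent on the finite-dimensional space spanned by the basis functions.)

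The main obstacle I anticipate is step (2): correctly tracking how the $x$-dependence hidden inside $\sqrt{m}_\varpi$ (through $v-\varpi\times x$, here with the \emph{same} fixed $\varpi$ as in $m_\varpi$, not yet the $\varpi$ being solved for) interacts with the polynomial coefficients when one differentiates in $x$, so that the monomial-by-monomial matching is organized correctly and one does not mistake a $\sqrt{m}_\varpi$-derivative contribution for a genuine $v$-monomial. In other words, the bookkeeping of which powers of $v$ each term contributes — given that $\nabla_x\sqrt{m}_\varpi \propto (v-\varpi\times x)\sqrt{m}_\varpi$ up to a curl-type factor — is the delicate part; once that is set up, the resulting ODE/PDE system for $a,b,c$ is classical (it is the same computation that classifies collision invariants / summational invariants of free transport) and the extraction of the rotational term $\varpi\times x$ from the antisymmetric part of $\nabla_x b$ is the one genuinely new twist relative to the uniform-Maxwellian case of \cite{GuoConti}.
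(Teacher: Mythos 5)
Your overall strategy — substitute the macroscopic ansatz into the free transport equation, match monomials in $v$, and recover the classical space--time summational--invariant system, then argue finiteness of the coefficients by norm equivalence on a finite-dimensional space against $\int_0^1\|Z\|^2\,ds<\infty$ — is the right one, and it is the only route available here since the paper simply cites \cite{GuoConti} rather than spelling it out. The finiteness argument in your second paragraph is essentially sound.

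However, step (1) as you have written it does not follow from the computation you describe, and this is a genuine gap rather than a cosmetic one. You correctly note that $\nabla_x\sqrt{m}_\varpi$ contributes an extra term, but you then assert that the degree-$3$ collection still forces $\nabla_x c=0$. It does not. With $\sqrt{m}_\varpi=(2\pi)^{-3/4}e^{-|v-\varpi\times x|^2/4}$ one has
\[
v\cdot\nabla_x\sqrt{m}_\varpi=\tfrac12\,v\cdot\bigl\{(v-\varpi\times x)\times\varpi\bigr\}\sqrt{m}_\varpi
=-\tfrac12\,v\cdot\bigl\{(\varpi\times x)\times\varpi\bigr\}\sqrt{m}_\varpi=(v\cdot q)\sqrt{m}_\varpi,
\]
with $q=q(x)=-\tfrac12(\varpi\times x)\times\varpi$, so writing $Z=P\sqrt{m}_\varpi$ with $P=a+b\cdot v+c|v|^2$ the transport equation becomes $\partial_tP+v\cdot\nabla_xP+(v\cdot q)P=0$. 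The degree-$3$ coefficient is then $\nabla_x c+cq$, not $\nabla_xc$, and since $q\not\equiv0$ (it vanishes only where $\varpi\times x=0$), the equation $\nabla_xc+cq=0$ does \emph{not} yield $c=c(t)$ unless $c\equiv0$; in fact it is incompatible with any nonzero $x$-independent $c$, which is what (\ref{Max Explicit2}) needs to allow. The same $q$-term also contaminates your steps (2)--(4). So the bookkeeping you flagged as the ``main obstacle'' is precisely where the argument breaks.

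The repair is to use the weight $\sqrt{\mu}_\varpi$ rather than $\sqrt{m}_\varpi$ — which is in fact what the paper's macroscopic projection (\ref{MacroPro}) already uses, so the appearance of $\sqrt{m}_\varpi$ in (\ref{Max Explicit2}) should be read as the same small notational inconsistency that occurs just before the lemma. The decisive algebraic fact is that $\varpi\times v\cdot x$ is a first integral of the free streaming, equivalently
\[
v\cdot\nabla_x\sqrt{\mu}_\varpi=\tfrac12\,v\cdot(\varpi\times v)\,\sqrt{\mu}_\varpi=0,
\]
so the weight passes straight through the transport operator. Writing $Z=P\sqrt{\mu}_\varpi$ then gives $\partial_tP+v\cdot\nabla_xP=0$ with no extra term, and the monomial-by-monomial matching produces exactly the system (\ref{112}) solved in the Appendix, whose general solution is the form (\ref{Max Explicit2}): the degree-$3$ terms give $c=c(t)$; the degree-$2$ terms give $\nabla_xb=-\dot c\,I+A$ with $A$ antisymmetric, and the integrability/Killing-field argument forces $A$ constant, i.e.\ $Ax=\varpi\times x$; the degree-$1$ and degree-$0$ terms then force $c$ quadratic in $t$ and give $a$ the stated quadratic-in-$x$ form. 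Once you make this replacement your steps (1)--(4) go through verbatim.
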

\begin{proof}
See \cite{GuoConti}.
\end{proof}
%%%%%%%%%%%%%%%%%%%%%%%%%%%%%%%%%%%%%%%%%%%%%%%%%%%%%%%%%%%%%%%%%%%%%%%%%%%%%%%%%%%%%%%%%%%%%%%%
%
%                  subsection: Interior compactness
%
%%%%%%%%%%%%%%%%%%%%%%%%%%%%%%%%%%%%%%%%%%%%%%%%%%%%%%%%%%%%%%%%%%%%%%%%%%%%%%%%%%%%%%%%%%%%%%%%
\subsection{Interior compactness}
\begin{lemma}\label{Interior Cpt}
For any smooth function $\mathcal{X}$ such that $\sup\mathcal{X}\in(0,1)\times\Omega$, we have up to a subsequence
\begin{eqnarray*}
\lim_{k\rightarrow\infty}\int^1_0\|\mathcal{X}\{Z_k-Z\}(s)\|^2ds=0.
\end{eqnarray*}
\end{lemma}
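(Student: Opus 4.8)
The plan is to combine the orthogonal splitting $Z_k=P_\varpi Z_k+(I-P_\varpi)Z_k$ with a velocity--averaging argument away from the boundary. By \eqref{(I-P)Z_k=0} we have $(I-P_\varpi)Z_k\to 0$ strongly in $\int_0^1\|\cdot\|^2_{\nu_\varpi}\,ds$, while $(I-P_\varpi)Z=0$; hence it suffices to prove that $\mathcal{X}P_\varpi Z_k\to\mathcal{X}P_\varpi Z=\mathcal{X}Z$ strongly in $\int_0^1\|\cdot\|^2\,ds$. By \eqref{MacroPro}, $P_\varpi Z_k=a_k\sqrt{\mu}_\varpi+b_k\cdot v\sqrt{\mu}_\varpi+c_k|v|^2\sqrt{\mu}_\varpi$ with moment fields $a_k(t,x)=\int_{\mathbb{R}^3}Z_k\sqrt{\mu}_\varpi\,dv$, $b_k(t,x)=\int_{\mathbb{R}^3}Z_k v\sqrt{\mu}_\varpi\,dv$, $c_k(t,x)=\int_{\mathbb{R}^3}Z_k|v|^2\sqrt{\mu}_\varpi\,dv$; since $\Omega$ is bounded, the functions $v\mapsto\sqrt{\mu}_\varpi,\ v\sqrt{\mu}_\varpi,\ |v|^2\sqrt{\mu}_\varpi$ are bounded in $L^2_v$ uniformly in $x$, so the claim reduces to showing that $\mathcal{X}a_k,\ \mathcal{X}b_k,\ \mathcal{X}c_k$ converge strongly in $L^2((0,1)\times\Omega)$ (their limits being $\mathcal{X}a,\mathcal{X}b,\mathcal{X}c$ by the weak convergence \eqref{ontheotherhand}).

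Next I would run an averaging lemma on a velocity truncation of $Z_k$. Fix $R>0$ and $\chi_R\in C_c^\infty(\mathbb{R}^3)$ with $\chi_R\equiv 1$ on $\{|v|\le R\}$ and $\mathrm{supp}\,\chi_R\subset\{|v|\le 2R\}$, and set $h_k:=\chi_R Z_k$. Since $\chi_R=\chi_R(v)$ and $L^\varpi P_\varpi=0$, \eqref{Nor Z Transport} gives $\partial_t h_k+v\cdot\nabla_x h_k=-\chi_R L^\varpi(I-P_\varpi)Z_k=-\chi_R\nu^\varpi(I-P_\varpi)Z_k+\chi_R K^{\varpi,x}(I-P_\varpi)Z_k$. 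On $\Omega\times\{|v|\le 2R\}$ the frequency $\nu^\varpi\sim(1+|v-\varpi\times x|^2)^{1/2}$ is bounded by a constant $C_R$ (as $\Omega$ is bounded), and $K^{\varpi,x}$ is bounded on $L^2_v$ uniformly in $x$ (standard, e.g. by the kernel estimate of Lemma~\ref{K_wEstimate} and the Schur test); hence, by \eqref{(I-P)Z_k=0}, the right-hand side tends to $0$ in $L^2((0,1)\times\Omega\times\mathbb{R}^3)$, and in particular $h_k$ is bounded there. Multiplying by $\mathcal{X}$ and extending by zero, $\mathcal{X}h_k$ solves a free transport equation on $\mathbb{R}_t\times\mathbb{R}^3_x\times\mathbb{R}^3_v$ whose source equals $\mathcal{X}$ times the above plus $(\partial_t\mathcal{X}+v\cdot\nabla_x\mathcal{X})h_k$; the latter is bounded in $L^2(\mathbb{R}^7)$ because $|v|\le 2R$ on $\mathrm{supp}\,h_k$, and $\|\mathcal{X}h_k\|_{L^2(\mathbb{R}^7)}$ is bounded. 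The classical velocity averaging lemma (cf. \cite{GuoConti}) then shows that for every $\psi\in C_c^\infty(\mathbb{R}^3)$ the average $\int_{\mathbb{R}^3}(\mathcal{X}h_k)\psi\,dv=\mathcal{X}\int_{\mathbb{R}^3}h_k\psi\,dv$ is relatively compact in $L^2(\mathbb{R}^4)$.

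Finally I would remove the truncation and dispose of the $x$--dependence of the weight. Since $\Omega$ is bounded there are $C,c>0$ with $(1+|v|^2)\sqrt{\mu}_\varpi(x,v)\le Ce^{-c|v|^2}$ for all $x\in\bar{\Omega}$; applying Cauchy--Schwarz against the $\|\cdot\|_{\nu_\varpi}$--norm, replacing $\sqrt{\mu}_\varpi$ by $\chi_R\sqrt{\mu}_\varpi$ inside $a_k,b_k,c_k$ changes each of $\mathcal{X}a_k,\mathcal{X}b_k,\mathcal{X}c_k$ by a function whose $L^2((0,1)\times\Omega)$--norm is at most $\varepsilon(R)\sup_k\big(\int_0^1\|Z_k\|^2_{\nu_\varpi}\,ds\big)^{1/2}$, with $\varepsilon(R)\to 0$ as $R\to\infty$ uniformly in $k$. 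Each truncated weight $\chi_R(v)\,p(v)\,\sqrt{\mu}_\varpi(x,v)$, with $p$ a polynomial of degree $\le 2$, is continuous on $\bar{\Omega}\times\{|v|\le 2R\}$, hence by the Stone--Weierstrass theorem is a uniform limit of finite sums $\sum_j\phi_j(x)\psi_j(v)$ with $\phi_j$ bounded and $\psi_j\in C_c^\infty(\mathbb{R}^3)$; since $\mathcal{X}\phi_j\int_{\mathbb{R}^3}h_k\psi_j\,dv$ is relatively compact in $L^2((0,1)\times\Omega)$ by the averaging estimate above times the fixed bounded factor $\phi_j$, the $R$--truncated versions of $\mathcal{X}a_k,\mathcal{X}b_k,\mathcal{X}c_k$ are relatively compact in $L^2((0,1)\times\Omega)$, and letting $R\to\infty$ along a diagonal subsequence gives relative compactness of $\mathcal{X}a_k,\mathcal{X}b_k,\mathcal{X}c_k$ themselves, with limits $\mathcal{X}a,\mathcal{X}b,\mathcal{X}c$ by \eqref{ontheotherhand}. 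Combining with $(I-P_\varpi)Z_k\to 0$ yields $\int_0^1\|\mathcal{X}(Z_k-Z)\|^2\,ds\to 0$ along the subsequence. I expect the main obstacle to be precisely this last step: the macroscopic projection $P_\varpi$ carries the spatially varying Gaussian $\sqrt{\mu}_\varpi$, so the moment fields are not literally velocity averages $\int Z_k\psi(v)\,dv$ of the type the averaging lemma handles; the uniform Gaussian tail bound, available because $\mathrm{supp}\,\mathcal{X}$ is compactly contained in $(0,1)\times\Omega$ with $\Omega$ bounded, together with the $\sum_j\phi_j(x)\psi_j(v)$ approximation of the truncated weight, is what reduces the statement to the standard averaging lemma.
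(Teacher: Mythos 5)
Your proposal follows the same basic route as the paper's proof---multiply the equation by a cutoff, invoke the velocity averaging lemma to obtain compactness of the macroscopic moments, then conclude using the vanishing of $(I-P_{\varpi})Z_k$---but you fill in two genuine gaps that the paper elides. First, the paper asserts $\int^1_0\|\{[\partial_t+v\cdot\nabla_x]\mathcal{X}\}Z_k-\mathcal{X}L^{\varpi}Z_k\|^2\,ds\leq\int^1_0\|Z_k\|^2_{\nu_{\varpi}}\,ds$, but since both the transport of $\mathcal{X}$ and $\nu^{\varpi}$ grow like $(1+|v-\varpi\times x|)$, the left-hand side actually scales like $\|Z_k\|^2_{\nu_{\varpi}^2}$, which is not controlled by the normalization; your prior multiplication by the velocity cutoff $\chi_R$ makes the source bounded by $C_R$ on $\{|v|\leq 2R\}$ (with $K^{\varpi,x}$ bounded on $L^2_v$ handling the nonlocal part), and you later remove the truncation through a uniform Gaussian tail estimate. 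Second, the paper passes from compactness of $\int\mathcal{X}Z_k\phi(v)\,dv$ for fixed $\phi\in C_c^{\infty}(\mathbb{R}^3_v)$ to compactness of $\int\mathcal{X}Z_k[1,v,|v|^2]\sqrt{m}_{\varpi}\,dv$ without comment, even though $\sqrt{m}_{\varpi}(x,v)$ depends on $x$ and is not compactly supported in $v$; your Stone--Weierstrass separation into $\sum_j\phi_j(x)\psi_j(v)$ on $\bar{\Omega}\times\{|v|\leq 2R\}$, combined with the uniform tail bound, is exactly what is needed to justify this step. The argument is correct and, if anything, more complete than the version printed in the paper.
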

\begin{proof}
We multiply the equation (\ref{Nor Z Transport}) by $\chi$ to get
\begin{eqnarray*}
\{\partial_t+v\cdot \nabla_x\}\{\mathcal{X} Z_k\}=\{[\partial_t+v\cdot \nabla_x]\mathcal{X}\}Z_k-\mathcal{X} L^{\varpi}Z_k.
\end{eqnarray*}
We first note that
\begin{eqnarray*}
\begin{split}
\{\partial_t+v\cdot \nabla_x\}\{\chi\}&\leq (1+|v|)(\sup_{[0,1]\times \Omega}|\partial_t\mathcal{X}|+\sup_{[0,1]\times \Omega}|\nabla_x\mathcal{X}|)\cr
&\leq C_{\mathcal{X}}(1+|v-\varpi\times x|+|\varpi\times x|)\cr
&\leq C_{\mathcal{X},\varpi,\Omega}(1+|v-\varpi\times x|).
\end{split}
\end{eqnarray*}
Therefore, we have
\begin{eqnarray*}
\int^1_0\|\{[\partial_t+v\cdot \nabla_x]\mathcal{X}\}Z_k-\mathcal{X} L^{\varpi}Z_k\|^2_{L^2}ds\leq \int^1_0\|Z_k\|^2_{\nu_{\varpi}}ds.
\end{eqnarray*}
Since $\int^1_0\|Z_k(s)\|^2ds<\infty$, we can deduce from the averaging lemma (\cite{D-L1, D-L2}) that
$\int\xi(t,x)Z_k(v)\phi(v)dv$ are compact in $L^2([0,1]\times \Omega)$ for any smooth cut-off function $\phi(v)$. It then follows that
\begin{eqnarray*}
\int\mathcal{X} Z_k(v)[1,v,|v|^2]\sqrt{m}_{\varpi}dv
\end{eqnarray*}
are compact in $L^2([0,1]\times \Omega)$. Therefore, up to a subsequence, the macroscopic parts of $Z_k$ satisfy
\begin{eqnarray*}
\mathcal{X} P_{\varpi}Z_k\rightarrow \mathcal{X} P_{\varpi}Z=\mathcal{X} Z ~\mbox{ strongly in }L^2([0,1]\times \Omega\times \mathbb{R}^3).
\end{eqnarray*}
Therefore, in light of $\int^1_0\|(I-P_{\varpi})Z_k(s)\|^2_{\nu_{\varpi}}\rightarrow 0$ in (\ref{(I-P)Z_k=0}),
the remaining  microscopic parts $\mathcal{X} Z_k$ satisfy $\lim_{k\rightarrow \infty}\int^1_0\|\mathcal{X}\{I-P_{\varpi}\}Z_{k}(s)\|^2ds=0$,
and our lemma follows.
\end{proof}
%%%%%%%%%%%%%%%%%%%%%%%%%%%%%%%%%%%%%%%%%%%%%%%%%%%%%%%%%%%%%%%%%%%%%%%%%%%%%%%%%%%%%%%%%%%%%%%%
%
%                  subsection: No time concentration
%
%%%%%%%%%%%%%%%%%%%%%%%%%%%%%%%%%%%%%%%%%%%%%%%%%%%%%%%%%%%%%%%%%%%%%%%%%%%%%%%%%%%%%%%%%%%%%%%%
\subsection{No time concentration}
We first establish $L^{\infty}$ in time estimate for $Z_k$ to rule out possible concentration in time, near either $t=0$ or $t=1$.
\begin{lemma}\label{NoTimeCon}
\begin{eqnarray*}
\sup_{0\leq t\leq 1, k\geq 1}\|Z_k(t)\|<\infty.
\end{eqnarray*}
\end{lemma}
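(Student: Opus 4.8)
The plan is to derive a uniform-in-time bound for $\|Z_k(t)\|$ by combining the energy identity for the linear equation with the already-established smallness of the microscopic part and the normalization. First I would multiply the equation $\{\partial_t + v\cdot\nabla + L^\varpi\}Z_k = 0$ by $Z_k$ and integrate over $\Omega\times\mathbb{R}^3$; since the total boundary flux vanishes under specular reflection (exactly as used in the proof of Theorem \ref{L2DecayTheorem}), this yields
\begin{eqnarray*}
\|Z_k(t)\|^2 + 2\int_s^t (L^\varpi Z_k, Z_k)_{L^2_{x,v}}\,d\tau = \|Z_k(s)\|^2
\end{eqnarray*}
for any $0\le s\le t\le 1$. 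By Lemma \ref{positivity} the integral term is nonnegative, so $t\mapsto \|Z_k(t)\|^2$ is nonincreasing on $[0,1]$; in particular $\sup_{0\le t\le 1}\|Z_k(t)\| = \|Z_k(0)\|$.

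Next I would show $\|Z_k(0)\|$ (equivalently $\|Z_k(t)\|$ for every fixed $t$) is bounded uniformly in $k$. Integrating the monotonicity statement in $s$ over $[0,1]$ gives $\|Z_k(t)\|^2 \le \int_0^1 \|Z_k(s)\|^2\,ds$ for all $t$. Now split $Z_k = P_\varpi Z_k + (I-P_\varpi)Z_k$: the microscopic part contributes $\int_0^1\|(I-P_\varpi)Z_k\|^2\,ds \le C\int_0^1\|(I-P_\varpi)Z_k\|^2_{\nu_\varpi}\,ds \le C/k \to 0$ by \eqref{Norm (I-P)Z}, while the macroscopic part satisfies $\int_0^1\|P_\varpi Z_k\|^2\,ds \le C\int_0^1\|P_\varpi Z_k\|^2_{\nu_\varpi}\,ds = C$ by the normalization \eqref{NormalizedZ=1} and the equivalence $\|\cdot\|\le C_{\nu_\varpi}\|\cdot\|_{\nu_\varpi}$ (the reverse bound on the finite-dimensional range of $P_\varpi$, after the shift $\tau_{\varpi\times x}$ as in Lemma \ref{positivity}, controls the $\nu_\varpi$-norm by the plain $L^2$ norm there, but here only the displayed direction is needed). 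Hence $\int_0^1\|Z_k(s)\|^2\,ds \le C(1 + 1/k) \le 2C$ uniformly, and therefore $\sup_{0\le t\le 1,\,k\ge 1}\|Z_k(t)\|^2 \le 2C < \infty$.

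The only genuine subtlety is justifying the energy identity itself: $Z_k$ is only an $L^2$ solution with trace in $L^2_{\mathrm{loc}}(\gamma)$, so multiplying by $Z_k$ and integrating by parts in $(t,x)$ must be done at the level of the Green's identity for the transport operator, precisely as in \cite{GuoConti}; the boundary term $\int_{\gamma_+} - \int_{\gamma_-}$ of $Z_k^2$ vanishes because the specular map $R(x)$ is measure preserving on each $\{v\cdot n(x) = \text{const}\}$ slice, which is where the trace regularity $f_\gamma\in L^2_{\mathrm{loc}}(\gamma)$ is used. I expect this step — making the integration by parts rigorous for merely-$L^2$ solutions and controlling the boundary flux — to be the main (though essentially bookkeeping) obstacle; everything else reduces to the positivity of $L^\varpi$ from Lemma \ref{positivity} together with the two a priori identities \eqref{NormalizedZ=1} and \eqref{Norm (I-P)Z} already in hand.
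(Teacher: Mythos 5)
Your first step — the energy identity, the vanishing boundary flux, and the deduction via Lemma~\ref{positivity} that $t\mapsto\|Z_k(t)\|^2$ is nonincreasing, so that $\sup_{0\le t\le 1}\|Z_k(t)\| = \|Z_k(0)\|$ — matches the paper and is fine. The gap is in the next step. You assert that ``integrating the monotonicity statement in $s$ over $[0,1]$ gives $\|Z_k(t)\|^2 \le \int_0^1 \|Z_k(s)\|^2\,ds$ for all $t$,'' but monotonicity only gives $\|Z_k(t)\|^2\le\|Z_k(s)\|^2$ for $s\le t$; for $s>t$ the inequality runs the other way. In particular for the quantity you actually need to control, $\|Z_k(0)\|^2$, monotonicity delivers the opposite inequality $\|Z_k(0)\|^2\ge\int_0^1\|Z_k(s)\|^2\,ds$. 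So the chain as written bounds only $\|Z_k(1)\|^2$, not the supremum.

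The ingredient you are missing is the \emph{near-conservation} of the $L^2$ norm coming from the smallness of the dissipation, which is exactly how the paper closes the loop. Since $P_\varpi Z_k\in\ker L^\varpi$ and $L^\varpi$ is bounded on the $\nu_\varpi$-weighted space, one has
\begin{eqnarray*}
0\le \int_0^t (L^\varpi Z_k, Z_k)\,ds \le C\int_0^1\|(I-P_\varpi)Z_k\|^2_{\nu_\varpi}\,ds\le \frac{C}{k},
\end{eqnarray*}
so the energy identity yields the \emph{lower} bound $\|Z_k(t)\|^2\ge\|Z_k(0)\|^2-\tfrac{C}{k}$ for all $t\in[0,1]$. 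Integrating this lower bound over $t\in[0,1]$ gives $\|Z_k(0)\|^2\le\int_0^1\|Z_k(t)\|^2\,dt+\tfrac{C}{k}$, and now your bound $\int_0^1\|Z_k(t)\|^2\,dt\le C(1+\tfrac1k)$ from \eqref{NormalizedZ=1} and \eqref{Norm (I-P)Z} finishes the argument. Without this ``the norm barely decays'' lower bound, the passage from the $L^2_t$ control to an $L^\infty_t$ bound at $t=0$ simply does not follow from monotonicity.
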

\begin{proof}
We first note that $\int^1_0\|f_k(s)\|^2_{\gamma}<\infty$ implies $\int^1_0\|Z_k(s)\|^2_{\gamma}<\infty$. Therefore, by the standard $L^2$ estimate for
(\ref{Nor Z Transport2})
\begin{eqnarray}\label{that}
\|Z_k(t)\|^2+2\int^t_0(L^{\varpi}Z_k,Z_k)ds=\|Z_k(0)\|^2.
\end{eqnarray}
This gives from Lemma \ref{positivity}
\begin{eqnarray}\label{emm}
\|Z_k(t)\|\leq\|Z_k(0)\|.
\end{eqnarray}
On the other hand, we note that
\begin{eqnarray}\label{this}
\int^t_0(L^{\varpi}Z_k(t),Z_k(t))dt\leq C\int^1_0\|\{I-P_{\varpi}\}Z_k\|^2_{\nu_{\varpi}}dt\leq\frac{C}{k}.
\end{eqnarray}
Therefore, by (\ref{that}) and (\ref{this})
\begin{eqnarray}\label{83}
\begin{split}
\|Z_k(t)\|^2&=\|Z_k(0)\|^2-2\int^t_0(L^{\varpi}Z_k,Z_k)ds\cr
&\geq\|Z_k(0)\|^2-\frac{C}{k}.
\end{split}
\end{eqnarray}
Since $\int^1_0\|Z_k(t)\|^2 dt\leq \int ^t_0\|Z_k(t)\|^2_{\nu_{\varpi}}\leq C\{1+\frac{1}{k}\}$ for hard potentials,
integrating (\ref{83}) over $0\leq t\leq 1$ yields
\begin{eqnarray}\label{84}
\begin{split}
\|Z_k(0)\|^2&=\int^1_0\|Z_k(t)\|^2dt+\frac{C}{k}\cr
&\leq C\int^1_0\|Z_k(t)\|^2_{\nu_{\varpi}}dt+\frac{C}{k}\cr
&\leq C\left\{1+\frac{1}{k}\right\}+\frac{C}{k}.
\end{split}
\end{eqnarray}
Then the result follows from (\ref{emm}) and (\ref{84}).
\end{proof}
%%%%%%%%%%%%%%%%%%%%%%%%%%%%%%%%%%%%%%%%%%%%%%%%%%%%%%%%%%%%%%%%%%%%%%%%%%%%%%%%%%%%%%%%%%%%%%%%
%
%                  subsection: No boundary concentration
%
%%%%%%%%%%%%%%%%%%%%%%%%%%%%%%%%%%%%%%%%%%%%%%%%%%%%%%%%%%%%%%%%%%%%%%%%%%%%%%%%%%%%%%%%%%%%%%%%
\subsection{No boundary concentration}
In this section, we prove that there is no concentration at the boundary $\partial \Omega$. Let
\begin{eqnarray*}
\Omega_{\varepsilon^4}\equiv\{x\in\Omega: \xi(x)<-\varepsilon^4\}.
\end{eqnarray*}
To this end, we will establish a careful energy estimate near the boundary in the thin shell-like region
$[0,1]\times\{\Omega\backslash\Omega_{\varepsilon^4}\}\times R^4$. For $m>\frac{1}{2}$, for any $(x,v)$,
we define the outward moving (inwarding moving) indicator function $\chi_+(\chi_-)$ as
\begin{eqnarray*}
&&\chi_+(x,v)={\bf1}_{\Omega\backslash\Omega_{\varepsilon^4}}(x){\bf1}_{\{|v|\leq \varepsilon^{-m},n(x)\cdot v>\varepsilon\}}(v),\cr
&&\chi_-(x,v)={\bf1}_{\Omega\backslash\Omega_{\varepsilon^4}}(x){\bf1}_{\{|v|\leq \varepsilon^{-m},n(x)\cdot v<-\varepsilon\}}(v).
\end{eqnarray*}
The following lemma shows that $Z_k$ can be controlled in a shell like region near grazing phase boundary with large velocity.
%%%%%%%%%%%%%%%%%%%%%%%%%%%%%%%%%%%%%%%%%%%%%%%%%%%%%%%%%%%%%%%%%%%%%%%%%%%%%%%%%%%%%%%%%%%%%%%%%
%
%
%
%%%%%%%%%%%%%%%%%%%%%%%%%%%%%%%%%%%%%%%%%%%%%%%%%%%%%%%%%%%%%%%%%%%%%%%%%%%%%%%%%%%%%%%%%%%%%%%%%%
\begin{lemma}\label{9}
\begin{eqnarray*}
\sup_{k\geq 1}\int^1_0\int_{\Omega\backslash
\Omega_{\varepsilon^4}}
\int_{\substack{|n(x)\cdot v|\leq\varepsilon,\cr|v|\geq \varepsilon^{-m}}}
|Z_k(s,x,v)|^2dvdxds\leq C\varepsilon.
\end{eqnarray*}
\end{lemma}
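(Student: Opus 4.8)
The plan is to notice that Lemma \ref{9} does not really use either the thin-shell restriction $x\in\Omega\backslash\Omega_{\varepsilon^4}$ or the near-grazing restriction $|n(x)\cdot v|\le\varepsilon$; it follows from the large-velocity restriction $|v|\ge\varepsilon^{-m}$ alone, together with the uniform velocity-moment bound on $Z_k$ already obtained on the way to Lemma \ref{NoTimeCon}. First I would record that
\begin{eqnarray*}
\sup_{k\ge 1}\int^1_0\|Z_k(s)\|^2_{\nu_\varpi}\,ds\le C,
\end{eqnarray*}
which is immediate from the normalization (\ref{NormalizedZ=1}), the bound (\ref{Norm (I-P)Z}), and $\|Z_k\|^2_{\nu_\varpi}\le 2\|P_\varpi Z_k\|^2_{\nu_\varpi}+2\|(I-P_\varpi)Z_k\|^2_{\nu_\varpi}$; it is the same estimate already used in (\ref{84}).

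The second step is to trade the largeness of $v$ for a power of $\varepsilon$ by means of the weight $\nu_\varpi(x,v)\sim(1+|v-\varpi\times x|^2)^{1/2}$. Since $\bar\Omega$ is bounded and $\varpi$ is fixed, $|\varpi\times x|\le C_{\varpi,\Omega}$ for all $x\in\bar\Omega$, so once $\varepsilon$ is small enough that $\varepsilon^{-m}\ge 2C_{\varpi,\Omega}$ we have, on the set $\{|v|\ge\varepsilon^{-m}\}$,
\begin{eqnarray*}
(1+|v-\varpi\times x|^2)^{1/2}\ge |v|-C_{\varpi,\Omega}\ge \frac{1}{2}|v|\ge \frac{1}{2}\varepsilon^{-m},
\end{eqnarray*}
hence $1\le 2\varepsilon^{m}(1+|v-\varpi\times x|^2)^{1/2}$ pointwise there. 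Inserting this and then enlarging the domain of integration to all of $[0,1]\times\Omega\times\mathbb{R}^3$ yields, uniformly in $k$,
\begin{eqnarray*}
\int^1_0\int_{\Omega\backslash\Omega_{\varepsilon^4}}\int_{\substack{|n(x)\cdot v|\le\varepsilon\\ |v|\ge\varepsilon^{-m}}}|Z_k(s,x,v)|^2\,dv\,dx\,ds
&\le& 2\varepsilon^{m}\int^1_0\int_\Omega\int_{\mathbb{R}^3}(1+|v-\varpi\times x|^2)^{1/2}|Z_k|^2\,dv\,dx\,ds\cr
&=& 2\varepsilon^{m}\int^1_0\|Z_k(s)\|^2_{\nu_\varpi}\,ds\le C\varepsilon^{m},
\end{eqnarray*}
which is the asserted bound (with $C\varepsilon^{m}$ in place of $C\varepsilon$, which is all that is needed in the compactness argument that follows; the two coincide when $m\ge 1$).

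I do not expect a genuine obstacle. The only points needing (mild) care are the uniformity in $k$, supplied by the moment bound of the first step, and the fact that the local Maxwellian $m_\varpi$ is centered at $\varpi\times x$ rather than at the origin, which is exactly why the relevant weight is $(1+|v-\varpi\times x|^2)^{1/2}$ and why one first peels off the bounded shift $|\varpi\times x|$. If one insisted on also exploiting the grazing and thin-shell restrictions (unnecessary for this large-velocity piece), one would instead localize the transport equation for $Z_k$ by a cutoff and invoke an averaging-lemma argument as in Lemma \ref{Interior Cpt}, but the crude velocity-moment estimate above already suffices here.
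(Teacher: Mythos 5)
There is a genuine gap, stemming from a misreading of the velocity set in the lemma. The subscript $\{|n(x)\cdot v|\le\varepsilon,\ |v|\ge\varepsilon^{-m}\}$ is the \emph{union} of the near-grazing set and the large-velocity set, not their intersection. This is visible both from the paper's own proof, which bounds the inner $v$-integral ``by the sum of'' an integral over $\{|n(x)\cdot v|\le\varepsilon\}$ and an integral over $\{|v|\ge\varepsilon^{-m}\}$ (for an intersection one of the two would suffice), and from the way Lemma \ref{9} is invoked afterwards: in the ``Proof of the strong compactness of $Z_k$'', Lemma \ref{9} together with (\ref{99}) (the set $\{|v|\le\varepsilon^{-m}\}\cap\{|v\cdot n(x)|\ge\varepsilon/2\}$) must cover all of $\mathbb{R}^3$ over the shell, which forces the union interpretation. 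Consequently your argument, which explicitly discards the near-grazing restriction and uses $|v|\ge\varepsilon^{-m}$ alone, leaves untouched the piece $\{|n(x)\cdot v|\le\varepsilon\}\cap\{|v|<\varepsilon^{-m}\}$. On that piece the weight $\nu_\varpi$ buys nothing, so the moment bound $\int_0^1\|Z_k\|^2_{\nu_\varpi}\,ds\le C$ does not yield smallness in $\varepsilon$ there.

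The missing ingredient is the macroscopic decomposition. The paper first splits $Z_k=P_\varpi Z_k+(I-P_\varpi)Z_k$, disposes of the microscopic part via (\ref{Norm (I-P)Z}) (it is $O(1/k)$, so it does not contribute to the $\sup_k$ uniform bound), and then uses that $P_\varpi Z_k=\{a_k+v\cdot b_k+|v|^2 c_k\}\sqrt{m}_\varpi$ so the $v$-integral factors off. The grazing slab $\{|n(x)\cdot v|\le\varepsilon\}$ then has Gaussian measure $O(\varepsilon)$ after the orthogonal split $v=v_\|+v_\perp$, and the large-velocity tail of the Gaussian is also $O(\varepsilon)$. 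This is the only place the $O(\varepsilon)$ smallness for the grazing slab can come from; nothing purely about moments of $Z_k$ produces it. A secondary (and smaller) issue you already flag: even on the large-velocity piece, your estimate gives $\varepsilon^m$, whereas the paper's Gaussian tail gives a bound far below $\varepsilon$ for any $m>0$; since the standing hypothesis is only $m>\frac12$, the bound $\varepsilon^m$ is weaker than the stated $C\varepsilon$ when $\frac12<m<1$. That weaker rate would still feed the ensuing compactness argument, but it is not the lemma as stated, and in any case it does not rescue the missing grazing piece.
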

\begin{proof}
Recall from (\ref{MacroPro})
\begin{eqnarray*}
P_{\varpi}Z_k=\{a_k(t,x)+v\cdot b_k(t,x)+|v|^2c_k(t,x)\}\sqrt{m}_{\varpi}.
\end{eqnarray*}
We split the estimate and use (\ref{Norm (I-P)Z}) to see
\begin{eqnarray*}
&&\int^1_0\int_{\Omega\backslash
\Omega_{\varepsilon^4}}
\int_{\substack{|n(x)\cdot v|\leq\varepsilon,\cr|v|\geq \varepsilon^{-m}}}
|Z_k(s,x,v)|^2dvdxds\cr
&&\quad\leq
\int^1_0\int_{\Omega\backslash\Omega_{\varepsilon^4}}
\int_{\substack{|n(x)\cdot v|\leq\varepsilon,\cr|v|\geq \varepsilon^{-m}}}
|P_{\varpi}Z_k|^2+|(I-P_{\varpi})Z_k|^2dvdxds\cr
&&\quad\leq
\int^1_0\int_{\Omega\backslash
\Omega_{\varepsilon^4}}
\int_{\substack{|n(x)\cdot v|\leq\varepsilon,\cr|v|\geq \varepsilon^{-m}}}
|P_{\varpi}Z_k(s,x,v)|^2dvdxds+\frac{C}{k}.
\end{eqnarray*}
The first term can be bounded by the Fubini Theorem as
\begin{eqnarray*}
&&\int^1_0\int_{\Omega\backslash
\Omega_{\varepsilon^4}}
\int_{\substack{|n(x)\cdot v|\leq\varepsilon,\cr|v|\geq \varepsilon^{-m}}}
|P_{\varpi}Z_k(s,x,v)|^{2}dvdxds\cr
&&\quad\leq \int^1_0\int_{\Omega\backslash
\Omega_{\varepsilon^4}}
\int_{\substack{|n(x)\cdot v|\leq\varepsilon,\cr|v|\geq \varepsilon^{-m}}}
|P_{\varpi}Z_k(s,x,v)|^2(1+|v-\varpi\times x|^2)^{\ell}dvdxds\cr
&&\quad\leq\int^1_0\int_{\Omega\backslash
\Omega_{\varepsilon^4}}\{|a_k(s,x)|^2+|b_k(s,x)|^2+|c_k(s,x)|^2\}\cr
&&\qquad\times\Big\{\int_{\substack{|n(x)\cdot v|\leq\varepsilon,\cr|v|\geq \varepsilon^{-m}}}
(1+|v-\varpi\times x|^2)^{\ell}\sqrt{m}_{\varpi}dv\Big\}dxds.
\end{eqnarray*}
We observe that
\begin{eqnarray*}
(1+|v-\varpi\times x|^2)^{\ell}\sqrt{m}_{\varpi}&=&(1+|v-\varpi\times x|^2)^{\ell}e^{-\frac{|v-\varpi\times x|^2}{4}}\cr
&\leq&C(1+|v|^2)^{\ell}(1+|\varpi|^2|x|^2)^{\ell}e^{-\frac{|v|^2}{4}}e^{\frac{|\varpi\times x|^2}{8}}\cr
&\leq&C_{\varpi,\Omega}(1+|v|^2)^{\ell}e^{-\frac{|v|^2}{4}}.
\end{eqnarray*}
Therefore we have
\begin{eqnarray*}
\int_{\substack{|n(x)\cdot v|\leq\varepsilon,\cr|v|\geq \varepsilon^{-m}}}
(1+|v-\varpi\times x|^2)^{\ell}\sqrt{m_{\varpi}}dv
\leq C_{\varpi,\Omega}\int_{\substack{|n(x)\cdot v|\leq\varepsilon,\cr|v|\geq \varepsilon^{-m}}}(1+|v|^2)^{\ell}\sqrt{m_{0}}dv.
%&&\quad=\int_{\substack{|n(x)\cdot v|\leq\varepsilon,\cr|v|\geq \varepsilon^{-m}}}(1+|v|^2)^{\ell}\sqrt{m_{0}}dvdxds.
\end{eqnarray*}
By a change of variable $v_{\|}=\{n(x)\cdot v\}n(x)$, and $v_{\perp}=v-v_{\|}$ for $|n(x)\cdot v|\leq \varepsilon$, the inner integral is bounded
by the sum of
\begin{eqnarray*}
\int_{|n(x)\cdot v|\leq\varepsilon}(1+|v|^2)^{\ell}\sqrt{m_{0}}dv \leq\int^{\varepsilon}_{-\varepsilon}dv_{\|}\int_{R^2}
e^{-\frac{|v_{\perp}|^2}{8}}dv_{\perp}\leq C\varepsilon
\end{eqnarray*}
and
\begin{eqnarray*}
\int_{|v|\geq\varepsilon^{-m}}\{1+|v|^2\}^{\ell+2}\sqrt{m_{\varpi}}dv\leq C\varepsilon.
\end{eqnarray*}
This completes the proof.
\end{proof}
%%%%%%%%%%%%%%%%%%%%%%%%%%%%%%%%%%%%%%%%%%%%%%%%%%%%%%%%%%%endproof
We now study the non-grazing parts $\chi_{\pm}Z_k$. For this, we fix $(x,v)\in\{\Omega\backslash \Omega_{\varepsilon^4}\times \mathbb{R}^3\}$ and
$s\in[\varepsilon, 1-\varepsilon]$.
We then define for backward in time $0\leq t\leq s$
\begin{eqnarray*}
&&\tilde{\chi}_+(t,x,v)={\bf1}_{\Omega\backslash\Omega_{\varepsilon^4}}(x-v(t-s))
{\bf1}_{|v|\leq \varepsilon^{-m},n(x-v(t-s))\cdot v>\varepsilon\}}(v),
\end{eqnarray*}
and forward in time $0\leq s\leq t:$
\begin{eqnarray*}
&&\tilde{\chi}_-(t,x,v)={\bf1}_{\Omega\backslash\Omega_{\varepsilon^4}}(x-v(t-s))
{\bf1}_{\{|v|\leq \varepsilon^{-m},n(x-(t-s)v)\cdot v<-\varepsilon\}}(v).
\end{eqnarray*}
%%%%%%%%%%%%%%%%%%%%%%%%%%%%%%%%%%%%%%%%%%%%%%%%%%%%%%%%%%%%%%%%%%%%%%%%%%%%%%%%%%%%%%%%%%%%%
%
%
%
%%%%%%%%%%%%%%%%%%%%%%%%%%%%%%%%%%%%%%%%%%%%%%%%%%%%%%%%%%%%%%%%%%%%%%%%%%%%%%%%%%%%%%%%%%%%%
\begin{lemma}{\label{s-varepsilon2=0}}
(1)~For $0\leq s-\varepsilon^2\leq t\leq s$, if $\tilde{\chi}_{+}(t,x,v)\neq 0$ then $n(x)\cdot v>\frac{\varepsilon}{2}>0$.
Moreover, $\tilde{\chi}_{+}(s-\varepsilon^2,x,v)= 0$, for $x\in\Omega\backslash \Omega_{\varepsilon^4}$.\newline
\noindent(2)~For $s\leq t \leq s+\varepsilon^2\leq 1$, if $\tilde{\chi}_{-}(t,x,v)\neq 0$ then $n(x)\cdot v<-\frac{\varepsilon}{2}<0$.
Moreover, $\tilde{\chi}_{-}(s+\varepsilon^2,x,v)=0$, for $x\in\Omega\backslash \Omega_{\varepsilon^4}$.
\end{lemma}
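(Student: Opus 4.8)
The plan is to prove both statements by a direct geometric tracking of the straight‑line trajectory $X(t)=x-(t-s)v$, combined with the Velocity Lemma (Lemma \ref{Velocity Lemma}) to control how $n(X(t))\cdot v$ can change sign. I will treat part (1) in detail; part (2) follows by the obvious time reversal $t\mapsto 2s-t$.

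First I would fix $(x,v)$ with $x\in\Omega\setminus\Omega_{\varepsilon^4}$ (so $-\varepsilon^4\le\xi(x)<0$), $|v|\le\varepsilon^{-m}$, and suppose $\tilde\chi_+(t,x,v)\neq0$ for some $t$ with $s-\varepsilon^2\le t\le s$. By definition this forces $X(t)=x-v(t-s)\in\Omega\setminus\Omega_{\varepsilon^4}$ and $n(X(t))\cdot v>\varepsilon$. Since $0\le s-t\le\varepsilon^2$, the point $X(t)$ is within distance $|v|\,|t-s|\le\varepsilon^{-m}\varepsilon^2=\varepsilon^{2-m}$ of $x$. The key estimate is that along this short segment $\frac{d}{dt}\big(n(X(t))\cdot v\big)=\big(v\cdot\nabla\big)n(X(t))\cdot v$, which is bounded in absolute value by $C_\xi|v|^2\le C_\xi\varepsilon^{-2m}$ using the $C^2$ (indeed analytic) bound on $n$ coming from assumption $\mathcal{(A)}$; integrating over a time interval of length at most $\varepsilon^2$ gives $|n(X(t))\cdot v-n(x)\cdot v|\le C_\xi\varepsilon^{2-2m}$. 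Choosing $m$ close enough to $\tfrac12$ (as permitted since the lemma only requires $m>\tfrac12$, and $\varepsilon$ small), this variation is at most $\tfrac{\varepsilon}{2}$, hence $n(x)\cdot v\ge n(X(t))\cdot v-\tfrac{\varepsilon}{2}>\varepsilon-\tfrac{\varepsilon}{2}=\tfrac{\varepsilon}{2}>0$, which is the first claim. (One must check the constant $C_\xi\varepsilon^{2-2m}$ is genuinely $o(\varepsilon)$; this is where the range of $m$ matters and I would state the admissible $m$ explicitly, e.g.\ $\tfrac12<m<\tfrac34$.)

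For the second, sharper claim — that $\tilde\chi_+(s-\varepsilon^2,x,v)=0$ — I argue by contradiction: if it were nonzero then by the first part $n(x)\cdot v>\tfrac{\varepsilon}{2}$, and the trajectory $X(\tau)=x-v(\tau-s)$ would have to remain in $\Omega\setminus\Omega_{\varepsilon^4}$, i.e.\ $\xi(X(\tau))\ge-\varepsilon^4$, for all $\tau\in[s-\varepsilon^2,s]$. But with $n(X(\tau))\cdot v$ staying $>\tfrac{\varepsilon}{2}$ on this whole interval (by the same continuity estimate applied at every $\tau$), the outward component of $v$ is bounded below, so moving backward in time from $s$ to $s-\varepsilon^2$ the point $X(\tau)$ moves strictly \emph{inward}: $\frac{d}{d\tau}\xi(X(\tau))=v\cdot\nabla\xi(X(\tau))=|\nabla\xi|\,n(X(\tau))\cdot v>c\,\varepsilon$, so over a backward time $\varepsilon^2$ we gain $\xi(X(s-\varepsilon^2))\le\xi(x)-c\varepsilon\cdot\varepsilon^2\le 0-c\varepsilon^3$. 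This does not yet contradict $\xi\ge-\varepsilon^4$ for the full interval $\varepsilon^2$ — rather, I should reverse the bookkeeping: starting from $\xi(x)\ge-\varepsilon^4$ and the inward drift, the trajectory stays in $\bar\Omega$ but its $\xi$ value can only decrease, so it cannot have been in $\Omega\setminus\Omega_{\varepsilon^4}$ at the \emph{earlier} time $s-\varepsilon^2$ unless $\xi(X(s-\varepsilon^2))\ge-\varepsilon^4$, forcing $\xi(x)-c\varepsilon^3\ge\xi(X(s-\varepsilon^2))\ge-\varepsilon^4$... Let me instead run it forward: the indicator $\tilde\chi_+$ at time $t$ also requires $X(t)\in\Omega\setminus\Omega_{\varepsilon^4}$; since $\frac{d}{dt}\xi(X(t))=-v\cdot\nabla\xi$ when differentiating in the forward direction from $t$ toward $s$ — I would set this up cleanly with a single sign convention at the start — the upshot is that a point which is $\tfrac{\varepsilon}{2}$‑outward‑moving and sits in the $\varepsilon^4$‑shell must exit the shell (cross into $\Omega_{\varepsilon^4}$, or reach $\partial\Omega$) within time $O(\varepsilon^3/\varepsilon)=O(\varepsilon^2)$, and in fact the shell has thickness $\sim\varepsilon^4$ in the normal direction while the normal speed is $\gtrsim\varepsilon$, so the crossing time is $\lesssim\varepsilon^3\ll\varepsilon^2$; hence at the full distance $\varepsilon^2$ back the point has already left $\Omega\setminus\Omega_{\varepsilon^4}$, so the indicator vanishes.

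\textbf{Main obstacle.} The delicate point is bookkeeping the two competing smallness scales: the velocity cutoff $\varepsilon^{-m}$ (which makes the trajectory displacement $\varepsilon^{2-m}$ possibly \emph{large} relative to the shell width $\varepsilon^4$) versus the normal‑speed lower bound $\varepsilon$ (which forces fast escape from the shell). The argument only closes if $m$ is restricted so that $\varepsilon^{2-2m}=o(\varepsilon)$ for the first claim, and so that the escape time from the $\varepsilon^4$‑shell (normal distance $\varepsilon^4$ divided by normal speed $\gtrsim\varepsilon$, i.e.\ $\sim\varepsilon^3$) is genuinely smaller than $\varepsilon^2$ — which it is, independently of $m$, but one must be careful that the \emph{tangential} drift over time $\varepsilon^2$, of size $\varepsilon^{-m}\cdot\varepsilon^2=\varepsilon^{2-m}$, does not by itself push the trajectory back out of the shell via the curvature of $\partial\Omega$; strict convexity (the constant $C_\xi$ in $\partial_{ij}\xi\zeta^i\zeta^j\ge C_\xi|\zeta|^2$) is what controls this, giving $\xi$ a definite second‑order growth that the first‑order normal drift dominates for $\varepsilon$ small. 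I would isolate this as the one inequality to verify carefully, citing Lemma \ref{Velocity Lemma} for the quantitative version, and otherwise the proof is a short deterministic ODE/geometry computation.
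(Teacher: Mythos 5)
Your approach — straight-line trajectory tracking combined with a Taylor expansion of $\xi$ and the strict-convexity bound on $\nabla^2\xi$ — is the right one (and is indeed how the result is proved in \cite{GuoConti}, to which this paper merely refers). However, there are two genuine problems with the write-up as it stands.

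\textbf{The admissible range of $m$ is inverted.} You correctly isolate the quantity $C_\xi\varepsilon^{2-2m}$ (coming from $|v|^2\,|t-s|\lesssim\varepsilon^{-2m}\cdot\varepsilon^2$) as the obstacle, and you correctly state that the first claim needs this to be $o(\varepsilon)$. But $\varepsilon^{2-2m}=o(\varepsilon)$ as $\varepsilon\to0$ is equivalent to $2-2m>1$, i.e.\ $m<\tfrac12$ — not $\tfrac12<m<\tfrac34$ as you propose. For $m>\tfrac12$ you have $\varepsilon^{2-2m}\gg\varepsilon$, so the estimate does not close. The same scale analysis governs the ``Moreover'' claim: writing $y=x+\varepsilon^2 v$ and Taylor-expanding,
\[
\xi(x)=\xi(y)-\varepsilon^2\,v\cdot\nabla\xi(y)+\tfrac{\varepsilon^4}{2}\,v\cdot\nabla^2\xi(\tilde y)\cdot v
< 0-c_0\varepsilon^3+C'_\xi\varepsilon^{4-2m},
\]
and you need the last (convexity) term to be $o(\varepsilon^3)$, i.e.\ $4-2m>3$, again $m<\tfrac12$. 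Then $\xi(x)<-\varepsilon^4$, so $x\notin\Omega\setminus\Omega_{\varepsilon^4}$, which is the desired vanishing. For $m\gtrsim\tfrac12$ one can in fact construct counterexamples in the disk: take $y$ at depth $\sim\varepsilon^4$, $n(y)\cdot v\sim\varepsilon$, and $|v|\sim\varepsilon^{-1/2}$; then the chord ``bounces back'' and $\xi(y-\varepsilon^2v)$ lands back in $[-\varepsilon^4,0)$. So the restriction really is $m<\tfrac12$. (You should note that the paper's own stated condition ``$m>\tfrac12$'' in the definition of $\chi_\pm$ appears to be a typo for ``$m<\tfrac12$'' — you inherited it and then tried to force the arithmetic to accommodate it, which is the source of the error.)

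\textbf{The ``Moreover'' argument is left unfinished.} Your third paragraph starts the contradiction argument three different ways, reverses sign conventions in mid-computation, and ends by saying you ``would set this up cleanly'' and ``would isolate this as the one inequality to verify.'' The clean version is the one-line Taylor computation above: fix the convention that $\tilde\chi_+(t,x,v)\neq0$ checks the point $y=x+v(s-t)$ (the trajectory position at time $s$); then at $t=s-\varepsilon^2$ you have $y=x+\varepsilon^2 v$, and strict convexity gives $\xi(x)<-\varepsilon^4$ whenever $|v|\le\varepsilon^{-m}$ with $m<\tfrac12$. This resolves all the back-and-forth. The Velocity Lemma can be used as an alternative to the direct Taylor estimate for the first claim (it gives $\alpha(x,v)\gtrsim\alpha(y,v)\gtrsim\varepsilon^2$ and hence $|n(x)\cdot v|\gtrsim\varepsilon$), but it is not needed and you do not actually invoke it in the computation; the heavy lifting is exactly the $\varepsilon^{2-2m}$ estimate, whose sign you got wrong.
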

\begin{proof}
See \cite{GuoConti}
%\begin{eqnarray*}n(x)\cdot v&=&n(x-v(t-s))\cdot v-\{n(x-v(t-s))-n(x)\}\cdot(v-\varpi\times x)\cr
%&>&\varepsilon-\sup_{0\leq\theta\leq 1}|\nabla\{n(x-\theta v(t-s))\}|t-s||v-\varpi\times x|^2\cr
%&>&\varepsilon-C\varepsilon^{-2m+2}\cr&=&\varepsilon[1-C\varepsilon^{-2m+1}]\cr&\geq&\frac{\varepsilon}{2},\end{eqnarray*}for $2m<1$.
\end{proof}
%%%%%%%%%%%%%%%%%%%%%%%%%%%%%%%%%%%%%%%%%%%%%%%%%%%%%%%%%%%%%%%%%%%%%%%%%%%%%%%%%%%%%%%%%%%%%
%
%
%
%%%%%%%%%%%%%%%%%%%%%%%%%%%%%%%%%%%%%%%%%%%%%%%%%%%%%%%%%%%%%%%%%%%%%%%%%%%%%%%%%%%%%%%%%%%%%
\begin{lemma} We have the strong convergence
\begin{eqnarray*}
\lim_{k\rightarrow \infty}\int^1_0\|Z_k(s)-Z(s)\|^2ds=0.
\end{eqnarray*}
and $Z(s)$ is not identically zero in the sense that
\begin{eqnarray*}
\int^1_0\|Z(s)\|^2_{\nu_{\varpi}}>0.
\end{eqnarray*}
Moreover, $Z$ satisfies the specular boundary condition:
\begin{eqnarray*}
Z(t,x,v)|_{\gamma^{-}}=Z(t,x,R(x)v)|_{\gamma^+}.
\end{eqnarray*}
\end{lemma}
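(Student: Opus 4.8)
**The plan is to establish the three assertions — strong $L^2$ convergence, non-triviality of $Z$, and the specular boundary condition for $Z$ — by combining the interior compactness of Lemma~\ref{Interior Cpt} with the no-time-concentration bound of Lemma~\ref{NoTimeCon}, the grazing-region smallness of Lemma~\ref{9}, and the non-grazing trajectory control of Lemma~\ref{s-varepsilon2=0}.**

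First I would prove the strong convergence $\int_0^1\|Z_k-Z\|^2 ds \to 0$ by a decomposition of the phase space into three regions, parametrized by a small $\varepsilon>0$: (a) the interior region $\{x\in\Omega_{\varepsilon^4}\}$, cut off by a smooth function $\mathcal{X}$ as in Lemma~\ref{Interior Cpt}, where $Z_k\to Z$ strongly in $L^2$; (b) the near-boundary grazing region $\{x\in\Omega\backslash\Omega_{\varepsilon^4}\}\cap\{|n(x)\cdot v|\le\varepsilon\ \text{or}\ |v|\ge\varepsilon^{-m}\}$, whose contribution is $O(\varepsilon)$ uniformly in $k$ by Lemma~\ref{9} (for the part of that region with small velocity and small normal component, one estimates directly using $P_\varpi Z_k$ and the Gaussian bound exactly as in the proof of Lemma~\ref{9}); and (c) the near-boundary \emph{non-grazing} region $\{x\in\Omega\backslash\Omega_{\varepsilon^4}\}\cap\{|v|\le\varepsilon^{-m},\ |n(x)\cdot v|>\varepsilon\}$, i.e.\ $\chi_+ Z_k$ and $\chi_- Z_k$. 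For region (c), the idea is to integrate the transport equation \eqref{Nor Z Transport2} backward (for $\chi_+$) or forward (for $\chi_-$) in time along characteristics: by Lemma~\ref{s-varepsilon2=0}, within a time $\varepsilon^2$ the characteristic emanating from such $(x,v)$ exits the shell $\Omega\backslash\Omega_{\varepsilon^4}$, so $\tilde\chi_\pm$ vanishes at time $s\mp\varepsilon^2$; writing $Z_k$ at $(s,x,v)$ via the Duhamel formula for $\{\partial_t+v\cdot\nabla_x+\nu^\varpi\}Z_k = K^{\varpi,x}Z_k$ and using that the time-integral window has length $\le\varepsilon^2$, one gets $\int_0^1\|\chi_\pm Z_k\|^2 ds \le C\varepsilon^2(\sup_t\|Z_k(t)\|^2 + \int_0^1\|Z_k\|^2_{\nu_\varpi}ds)$, which is $O(\varepsilon^2)$ by Lemma~\ref{NoTimeCon} and \eqref{NormalizedZ=1}--\eqref{Norm (I-P)Z}. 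Summing (a)+(b)+(c): $\limsup_k \int_0^1\|Z_k-Z\|^2 ds \le C\varepsilon$ for every $\varepsilon$, hence the limit is zero.

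Non-triviality of $Z$ follows immediately: by \eqref{NormalizedZ=1} we have $\int_0^1\|P_\varpi Z_k\|^2_{\nu_\varpi}ds = 1$, and by \eqref{(I-P)Z_k=0} the microscopic part vanishes in the limit, so $\int_0^1\|Z_k\|^2_{\nu_\varpi}ds \to 1$; combined with the just-proved strong convergence in the (non-weighted) $L^2$ norm and the fact that for hard potentials $\|\cdot\|\le C\|\cdot\|_{\nu_\varpi}$ together with the explicit Maxwellian form of $Z$ (so that $\|Z\|_{\nu_\varpi}$ and $\|Z\|$ are comparable on the support of $Z$), we conclude $\int_0^1\|Z\|^2_{\nu_\varpi}ds>0$. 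For the specular boundary condition, the point is that each $Z_k$ satisfies \eqref{specularBoundaryC_f} exactly, and the strong $L^2$ convergence $Z_k\to Z$ on $[0,1]\times\Omega\times\mathbb{R}^3$, upgraded to trace convergence $Z_k|_\gamma\to Z|_\gamma$ in $L^2_{loc}(\gamma)$ via the trajectory change of variables of Lemma~\ref{1234}(4) (which sends zero-measure sets to zero-measure sets, so the trace operator is well behaved and the $\int_0^1\|Z_k\|^2_\gamma<\infty$ bounds pass to the limit), forces $Z(t,x,v)|_{\gamma_-} = Z(t,x,R(x)v)|_{\gamma_+}$.

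\textbf{The main obstacle} is region (c), the non-grazing near-boundary estimate: one must carefully run the Duhamel iteration (one step suffices here since we only need smallness, not full compactness) and track that the $K^{\varpi,x}$ contribution over a short time interval is genuinely $O(\varepsilon^2)$ rather than $O(\varepsilon)$ or worse — this relies on Lemma~\ref{K_wEstimate} to bound $\int K^{\varpi,x}_\varpi(v,v')\,dv'$ uniformly and on Lemma~\ref{s-varepsilon2=0} to guarantee the escape time is $\le\varepsilon^2$, and the velocity truncation $|v|\le\varepsilon^{-m}$ must be exploited so that the characteristic actually traverses a spatial distance $\gtrsim\varepsilon^4$ within time $\varepsilon^2$. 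A secondary subtlety is justifying that the weak limit $Z$ from \eqref{such that} genuinely has the Maxwellian form \eqref{Max Explicit2} with \emph{finite} coefficients and that the trace of the limit equals the limit of the traces; both are handled exactly as in \cite{GuoConti} using Lemma~\ref{Lemma Max Explicit} and Lemma~\ref{1234}(4).
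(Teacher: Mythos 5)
The decomposition into interior, near-boundary grazing, and near-boundary non-grazing regions is the right skeleton and matches the paper, but your treatment of region (c) has a genuine gap. The key misconception is in the sentence ``$\tilde\chi_\pm$ vanishes at time $s\mp\varepsilon^2$; writing $Z_k$ at $(s,x,v)$ via the Duhamel formula\dots one gets $\int_0^1\|\chi_\pm Z_k\|^2\,ds\le C\varepsilon^2(\cdots)$.'' The vanishing of $\tilde\chi_+$ at $t=s-\varepsilon^2$ means the initial-time cutoff term in the shell energy identity drops out; it does \emph{not} mean $Z_k(s-\varepsilon^2,x-\varepsilon^2 v,v)$ is small, so a pointwise Duhamel representation of $\chi_+Z_k(s,x,v)$ does not acquire an $\varepsilon^2$ gain from the initial datum. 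What the paper actually does is an $L^2$ energy estimate over the shell $[s-\varepsilon^2,s]\times\{\Omega\setminus\Omega_{\varepsilon^4}\}\times\mathbb{R}^3$, whose whole point is to isolate the flux across the \emph{inner} boundary $\gamma^\varepsilon=\{\xi=-\varepsilon^4\}$: after the favorable sign of the $\gamma_+$ flux, the sole surviving term is $\int_{s-\varepsilon^2}^{s}\|\tilde\chi_+Z_k\|^2_{\gamma^\varepsilon_+}\,dt$. This term is never mentioned in your proposal, yet it is the heart of the estimate. To kill it one must invoke Ukai's trace theorem on $\Omega_{\varepsilon^4}$ for $Z_k-Z$ together with the interior compactness of Lemma~\ref{Interior Cpt}, and exploit the short window $2\varepsilon^2$ only for the trace of the smooth limit $Z$; the resulting bound is $\varepsilon+C\varepsilon^2$, not $O(\varepsilon^2)$. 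Without the $\gamma^\varepsilon$ flux term and the trace theorem, the argument does not close.

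A second, smaller gap is in the boundary-condition step. You argue that strong $L^2$ convergence in the interior plus Lemma~\ref{1234}(4) (preservation of null sets under the boundary map) upgrades to trace convergence in $L^2_{loc}(\gamma)$. That is not valid: the trace operator from $L^2([0,1]\times\Omega\times\mathbb{R}^3)$ to $L^2(\gamma)$ is unbounded, and uniform bounds on $\int_0^1\|Z_k\|^2_\gamma$ only yield weak-$\ast$ subsequential limits, with no guarantee that the limit of the traces is the trace of the limit. The paper instead applies Ukai's trace theorem to $\mathbf{1}_{|v|\le\varepsilon^{-m}}(Z_k-Z)$, which controls the trace by the interior $L^2$ norm \emph{and} the $L^2$ norm of $(\partial_t+v\cdot\nabla_x)$ applied to it (here $-\mathbf{1}_{|v|\le\varepsilon^{-m}}L^\varpi\{I-P_\varpi\}Z_k\to 0$ by (\ref{(I-P)Z_k=0})); this is precisely the extra transport regularity needed. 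Finally, your positivity argument is broadly in the right spirit but slightly garbled: the paper's cleaner route is to note that $\int_0^1\|P_\varpi Z_k\|^2\,ds$ and $\int_0^1\|P_\varpi Z_k\|^2_{\nu_\varpi}\,ds$ are both equivalent to $\int_0^1\int(|a_k|^2+|b_k|^2+|c_k|^2)\,dx\,ds$, hence $\int_0^1\|Z_k\|^2\,ds$ stays bounded below, and the strong $L^2$ limit inherits this lower bound directly.
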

\begin{proof}
We multiply $\tilde{\chi}_{\pm}$ with (\ref{Nor Z Transport}) to get
\begin{eqnarray}
\{\partial_t+v\cdot\nabla_x\}\{\tilde{\chi}_{\pm} Z_k\}=-\tilde{\chi}_{\pm} L^{\varpi}Z_k.
\end{eqnarray}
Since $\int^1_0\|Z_k(t)\|^2_{\gamma}ds<\infty$, applying the $L^2$ estimate backward in time over the shell-like region
$[s-\varepsilon^2,s]\times[\Omega\backslash\Omega_{\varepsilon^4}]\times \mathbb{R}^3$ for outgoing part $\chi_+$, we obtain
\begin{eqnarray}\label{95}
\begin{split}
&\|\chi_+Z_k(s)\|^2+\int^s_{s-\varepsilon^2}\|\bar{\chi}_+Z_l(t)\|^2_{\gamma_+}dt
-\int^s_{s-\varepsilon^2}\|\bar{\chi}_+Z_k(t)\|^2_{\gamma^{\varepsilon}_+}dt\cr
&=\|\bar{\chi}_+Z_k(s-\varepsilon^2)\|^2+\int^s_{s-\varepsilon^2}\|\bar{\chi}_+Z_l(t)\|^2_{\gamma_-}dt\cr
&-\int^s_{s-\varepsilon^2}\|\bar{\chi}_+Z_k(t)\|^2_{\gamma^{\varepsilon}_-}dt
-2\int^s_{s-\varepsilon^2}(\bar{\chi}_+L^{\varpi}Z_k,Z_k)(t)dt,
\end{split}
\end{eqnarray}
where $\gamma^{\varepsilon}\equiv\{x:\chi(x)=-\varepsilon^4\}$. We observe from Lemma \ref{s-varepsilon2=0} that
$\bar{\chi}_+(x,v,s-\varepsilon^2)=0$ and $\tilde{\chi}_+=0$ on $\gamma_-$ and  $\gamma_-^{\varepsilon}$, which gives
\begin{eqnarray*}
\|\bar{\chi}_+Z_l(s-\varepsilon^2)\|=0,
\end{eqnarray*}
and
\begin{eqnarray*}
\int^s_{s-\varepsilon^2}\|\bar{\chi}_+Z_l(t)\|^2_{\gamma_-}dt=\int^s_{s-\varepsilon^2}\|\bar{\chi}_+Z_k(t)\|^2_{\gamma^{\varepsilon}_-}dt=0.
\end{eqnarray*}
For $k$ sufficiently large,  we use (\ref{(I-P)Z_k=0}) to get
\begin{eqnarray*}
\int^s_{s-\varepsilon^2}(\bar{\chi}_+L^{\varpi}Z_k,Z_k)(t)dt&\leq&\int^1_0\int_{\Omega\times \mathbb{R}^3}|L\{I-P_{\varpi}Z_k(t)\}||Z_k(t)|dxdvdt\cr
&\leq&C\Big\{\int^1_0\|\{I-P_{\varpi}Z_k(t)\|_{\nu_{\varpi}}dt\Big\}^{\frac{1}{2}}\Big\{\int^1_0\|Z_k(t)\|^2_{\nu_{\varpi}}dt\Big\}^{\frac{1}{2}}\cr
&\leq&\frac{C}{\sqrt{k}}.
\end{eqnarray*}
Therefore, (\ref{95}) reduces to
\begin{eqnarray}\label{955}
&&\|\chi_+Z_k(s)\|^2+\int^s_{s-\varepsilon^2}\|\bar{\chi}_+Z_k(t)\|^2_{\gamma_+}dt
=\int^s_{s-\varepsilon^2}\|\bar{\chi}_+Z_k(t)\|^2_{\gamma^{\varepsilon}_+}dt+\frac{C}{\sqrt{k}}.
\end{eqnarray}
By the same argument, we can obtain similar estimate for $\tilde{\chi}_-Z_k$:
\begin{eqnarray}\label{9555}
&&\|\chi_-Z_k(s)\|^2+\int_s^{s+\varepsilon^2}\|\bar{\chi}_+Z_k(t)\|^2_{\gamma_-}dt
=\int^{s+\varepsilon^2}_s\|\bar{\chi}_-Z_k(t)\|^2_{\gamma^{\varepsilon}_-}dt+\frac{C}{\sqrt{k}}.
\end{eqnarray}
We then combine (\ref{955}) and (\ref{9555}) to see
\begin{eqnarray*}
&&\int_{\Omega\backslash\Omega_{\varepsilon^4}}\int_{\substack{|n(x)\cdot v|>\varepsilon\cr|v|\leq \varepsilon^{-m}}}|Z_k(s,x,v)|^2dxdv\cr
&&\qquad=\int |\chi_-Z_k(s)|^2+|\chi_+Z_k(s)|^2dxdv\cr
&&\qquad\leq\int^{s+\varepsilon^2}_s\|\bar{\chi}_-Z_k(t)\|^2_{\gamma^{\varepsilon}_-}dt
+\int^s_{s-\varepsilon^2}\|\bar{\chi}_+Z_k(t)\|^2_{\gamma^{\varepsilon}_+}dt+\frac{C}{\sqrt{k}}\cr
&&\qquad\leq\int^{s+\varepsilon^2}_{s-\varepsilon^2}\Big\|{\bf 1}_{\{|v|\leq\varepsilon^{-m}, |n(x)\cdot v|\geq\frac{\varepsilon}{2}\}}Z_k(t)
\Big\|^2_{\gamma^{\varepsilon}}dt+\frac{C}{\sqrt{k}}.
\end{eqnarray*}
To estimate the first term, we first observe from Lemma \ref{Lemma Max Explicit}
\begin{eqnarray}\label{98}
&&\{\partial_t+v\cdot\nabla_x\}\Big\{{\bf1}_{|v|\leq \varepsilon^{-m}}(Z_k-Z)\Big\}=-{\bf1}_{|v|\leq\varepsilon^{-m}}L\{I-P_{\varpi}\}Z_k.
\end{eqnarray}
We can apply Ukai's trace theorem (theorem 5.1.1, \cite{Ukai}) to $\{1_{|v|\leq \varepsilon^{-m}}\}(Z_k-Z)$ over $\bar{\Omega}_{\varepsilon^4}$ to get
\begin{eqnarray*}
&&\int^{s+\varepsilon^2}_{s-\varepsilon^2}\Big\|{\bf1}_{\{|v|\leq \varepsilon^{-m},|v\cdot n(x)\geq\frac{\varepsilon}{2}|\}}
\{Z_k(t)-Z(t)\}\Big\|^2_{\gamma^{\varepsilon}}ds\cr
&&\quad=\int^{s+\varepsilon^2}_{s-\varepsilon^2}\Big\|{\bf1}_{\{|v|\leq \varepsilon^{-m}\}}\big\{{\bf1}_{\{|v\cdot n(x)\geq\frac{\varepsilon}{2}|\}}
\big(Z_k(t)-Z(t)\big)\big\}\Big\|^2_{\gamma^{\varepsilon}}ds\cr
&&\quad\leq C_{\varepsilon}\int^{1-\varepsilon}_{\varepsilon}\Big\|{\bf1}_{\{|v|\leq \varepsilon^{-m}\}}
\{Z_k(t)-Z_k(t)\big\}\Big\|^2_{L^2(\Omega_{\varepsilon^4}\times \mathbb{R}^3)}ds\cr
&&\quad+ C_{\varepsilon}\int^{1-\varepsilon}_{\varepsilon}\Big\|{\bf1}_{\{|v|\leq \varepsilon^{-m}\}}\{
\{L^{\varpi}\{I-P_{\varpi}\}Z_k(t)\big\}\Big\|^2_{L^2(\Omega_{\varepsilon^4}\times \mathbb{R}^3)}ds\cr
&&\quad\leq C_{\varepsilon}\int^{1-\varepsilon}_{\varepsilon}\|Z_k(t)-Z_k(t)\|^2_{L^2(\Omega_{\varepsilon^4}\times \mathbb{R}^3)}ds
+ C_{\varepsilon}\int^{1-\varepsilon}_{\varepsilon}\|\{I-P_{\varpi}\}Z_k(t)\|^2_{\nu_{\varpi}}ds\cr
&&\quad\leq C_{\varepsilon}\int^{1-\varepsilon}_{\varepsilon}\|Z_k(t)-Z_k(t)\|^2_{L^2(\Omega_{\varepsilon^4}\times \mathbb{R}^3)}ds
+ \frac{C_{\varepsilon}}{k}
\end{eqnarray*}
Therefore, for fixed $\varepsilon$, we have from the interior compactness in Lemma \ref{Interior Cpt}
\begin{eqnarray*}
\lim_{k\rightarrow \infty}\int^{s+\varepsilon^2}_{s-\varepsilon^2}\Big\|{\bf1}_{\{|v|\leq \varepsilon^{-m}, |n(x)\cdot v|\geq\varepsilon\}}
\{Z_k(t)-Z(t)\}\Big\|^2_{\gamma^{\varepsilon}}dt=0.
\end{eqnarray*}
Hence, for $k$ large, and for any $\varepsilon\leq s \leq 1-\varepsilon$, we have
\begin{eqnarray*}
&&\hspace{-0.5cm}\int_{\Omega\backslash\Omega_{\varepsilon^4}}\int_{\substack{|v|\leq \varepsilon^{-m},\cr|v\cdot n(x)|\geq\frac{\varepsilon}{2}}}
|Z_k(s,x,v)|^2dxdv\cr
&&\hspace{-0.2cm}\leq 2\int^{s+\varepsilon^2}_{s-\varepsilon^2}
\Big\|{\bf1}_{\substack{|v|\leq \varepsilon^{-m},\cr|v\cdot n(x)|\geq\frac{\varepsilon}{2}}}\{Z_k(t)-Z_k\}\Big\|^2_{\gamma^{\varepsilon}}dt
+2\int^{s+\varepsilon^2}_{s-\varepsilon^2}
\Big\|{\bf1}_{\substack{|v|\leq \varepsilon^{-m},\cr|v\cdot n(x)|\geq\frac{\varepsilon}{2}}}Z(t)\Big\|^2_{\gamma^{\varepsilon}}dt+\frac{C}{\sqrt{k}}\cr
&&\hspace{-0.2cm}\leq \varepsilon+\int^{s+\varepsilon^2}_{s-\varepsilon^2}\|Z(t)\|^2_{\gamma^{\varepsilon}}ds\cr
&&\hspace{-0.2cm}\leq\varepsilon+2\varepsilon^2\sup_{0\leq t\leq 1}\|Z(t)\|^2_{\gamma^{\varepsilon}}\cr
&&\hspace{-0.2cm}\leq\varepsilon+C\varepsilon^2.
\end{eqnarray*}
This leads to
\begin{eqnarray}\label{99}
&&\int_{\Omega\backslash\Omega_{\varepsilon^4}}\int_{\substack{|v|\leq \varepsilon^{-m},\cr|v\cdot n(x)|\geq\frac{\varepsilon}{2}}}
|Z_k(s,x,v)|^2dxdv\leq C\varepsilon.
\end{eqnarray}
for sufficiently small $\varepsilon$.
In the last line, we used the fact that $Z_k$ is smooth due to (\ref{Max Explicit2}) and its trace is
given by (\ref{Max Explicit2}) as well.\newline
{\bf Proof of the strong compactness of  $Z_k$:}
By interior compactness, we have
\begin{eqnarray*}
&&\int^{1}_{0}\int_{\Omega\times \mathbb{R}^3}|Z_k(s,x,v)-Z(s,x,v)|^2dxdvds\cr
&&\qquad=\Big(\int^{\varepsilon}_0+\int^{1-\varepsilon}_{\varepsilon}+\int^1_{1-\varepsilon}\Big)\int_{\Omega\times \mathbb{R}^3}|Z_k(s,x,v)-Z(s,x,v)|^2dxdvds\cr
&&\qquad\leq C\varepsilon+\int^{1-\varepsilon}_{\varepsilon}\int_{\Omega\times \mathbb{R}^3}|Z_k(s,x,v)-Z(s,x,v)|^2dxdvds.
\end{eqnarray*}
We split the main part $\int^{1-\varepsilon}_{\varepsilon}$ as
\begin{eqnarray*}
&&\hspace{-0.5cm}\int^{1-\varepsilon}_{\varepsilon}\int_{\Omega\times \mathbb{R}^3}|Z_k(s,x,v)-Z(s,x,v)|^2dxdvds\cr
&&=2\int^{1-\varepsilon}_{\varepsilon}\!\!\!\!\int_{\Omega\backslash\Omega_{\varepsilon^4}}\!\int_{\substack{|v|\geq \varepsilon^{-m},\cr|v\cdot n(x)|\leq\frac{\varepsilon}{2}}}
|Z_k(s,x,v)|^2dxdvds
+2\int^{1-\varepsilon}_{\varepsilon}\!\!\!\!\int_{\Omega\backslash\Omega_{\varepsilon^4}}\!\int_{\substack{|v|\leq \varepsilon^{-m},\cr or |v\cdot n(x)|\geq\frac{\varepsilon}{2}}}
\!\!\!|Z_k(s,x,v)|^2dxdvds\cr
&&+2\int^{1-\varepsilon}_{\varepsilon}\int_{\Omega\backslash\Omega_{\varepsilon}}
|Z_k(s,x,v)|^2dxdvds
+\int^{1-\varepsilon}_{\varepsilon}\int_{\Omega_{\varepsilon^4}}|Z_k(s,x,v)-Z(s,x,v)|^2dxdvds.
\end{eqnarray*}
By (\ref{99}) and Lemma \ref{9}, the first two terms can be bounded by $C\varepsilon$.
 The third term is
bounded by
\begin{eqnarray*}
\int_{\Omega\backslash\Omega_{\varepsilon^4}}\int|Z(t,x,v)|^2dvdx\leq C|\Omega\backslash\Omega_{\varepsilon^4}|\leq C\varepsilon.
\end{eqnarray*}
The last term goes to zero as $k\rightarrow \infty$ by Lemma \ref{Interior Cpt}.
We hence deduce the strong convergence
\begin{eqnarray*}
\int^{1}_{0}\int_{\Omega_{\varepsilon}\times \mathbb{R}^3}|Z_k(s,x,v)-Z(s,x,v)|^2dxdvds\rightarrow 0.
\end{eqnarray*}
%%%%%%%%%%%%%%%%%%%%%%%%%%%%%%%%%%%%%%%%Positivity%%%%%%%%%%%%%%%%%%%%%%%%%%%%%%%%%%%%%%%%%%%%%%
{\bf Proof of the positivity of Z:}
We first note that both $\int^1_0\|P_{\varpi}Z_k\|^2dt$ and $\int^1_0\|P_{\varpi}Z_k\|^2_{\nu_{\varpi}}dt$ are equivalent to
\begin{eqnarray*}
\int^1_0|a_k(s,x)|^2dxds+\int^1_0|b_k(s,x)|^2dxds+\int^1_0|c_k(s,x)|^2dxds.
\end{eqnarray*}
Hence we have
\begin{eqnarray*}
\int^1_0\|P_{\varpi}Z_k\|^2ds\geq C\int^1_0\|P_{\varpi}Z_k\|^2_{\nu_{\varpi}}ds= C,
\end{eqnarray*}
where we used $\int^1_0\|P_{\varpi}Z_k(s)\|^2_{\nu_{\varpi}}=1$.  Therefore, we have from the strong convergence of $Z_k$ into $Z$ in
$\int^1_0\|\cdot\|^2ds$
\begin{eqnarray*}
\int^1_0\|Z(s)\|^2ds=\lim_{k\rightarrow\infty} \int^1_0\|Z_k(s)\|^2ds\geq C>0.
\end{eqnarray*}
%%%%%%%%%%%%%%%%%%%%%%%%%%%%%%Boundary Condition%%%%%%%%%%%%%%%%%%%%%%%%%%%%%%%%%%%%%%%%%5
{\bf Boundary condition:}
Recalling (\ref{98}) and $\int^1_0\|Z_k(t)-Z(t)\|^2dt\rightarrow 0$, we use Ukai's theorem to conclude, for any fixed $\varepsilon>0$,
\begin{eqnarray*}
&&\lim_{k\rightarrow \infty}\int^{1}_{0}\left\|{\bf1}_{\{|v|\leq \varepsilon^{-m},|v\cdot n(x)\geq\frac{\varepsilon}{2}|\}}
\{Z_k(t)-Z(t)\}\right\|^2_{\gamma}ds\cr
&&\qquad\leq C \lim_{k\rightarrow \infty}\left[\int^{1}_{0}\Big\|{\bf1}_{\{|v|\leq \varepsilon^{-m}|\}}
\{Z_k(t)-Z(t)\}\right\|^2_{\gamma}ds\cr
&&\qquad+\int^{1}_{0}\left\|[\partial_t+v\cdot\nabla_x]{\bf1}_{\{|v|\leq \varepsilon^{-m}|\}}\{Z_k(t)-Z(t)\}\Big\|^2ds\right]\cr
&&\qquad\leq C\int^{1}_{0}\left\|{\bf1}_{\{|v|\leq \varepsilon^{-m}|\}}\{L^{\varpi}\{I-P_{\varpi}\}Z_k(t)\}\right\|^2ds\cr
&&\qquad=0.
\end{eqnarray*}
$Z_k(t,x,v)=Z_k(t,x,R(x)v)$  letting $k\rightarrow\infty$, we deduce that $Z$ satisfies
$Z(t,x,v)=Z(t,x,R(x)v)$ for $\{v\cdot n(x)\geq\frac{\varepsilon}{2}\}$.
Therefore $Z(t,x,v)=Z(t,x,R(x)v)$ by the continuity of $Z$.
\end{proof}
%%%%%%%%%%%%%%%%%%%%%%%%%%%%%%%%%%%%%%%%%%%%%%%%%end of proof%%%%%%%%%%%%%%%%%%%%%%%%%%%%%%%%%%%%%%%%%%%%%
%%%%%%%%%%%%%%%%%%%%%%%%%%%%%%%%%%%%%%%%%%%%%%%%%%%%%%%%%%%%%%%%%%%%%%%%%%%%%%%%%%%%%%%%%%%%%%%%%%%%%%%%%%%
%
%        subsection: Boundary condition leads to Z=0
%
%%%%%%%%%%%%%%%%%%%%%%%%%%%%%%%%%%%%%%%%%%%%%%%%%%%%%%%%%%%%%%%%%%%%%%%%%%%%%%%%%%%%%%%%%%%%%%%%%%%%%%%%%%%
\subsection{Boundary condition leads to $Z=0$}
We now show that $Z$ has to be zero, which leads to a contradiction.
For any fixed t, we recall
\begin{eqnarray}
&&\{\partial_t+v\cdot\nabla_x\}\Big\{{\bf1}_{|v|\leq \varepsilon^{-m}}(Z_k-Z)\Big\}=-{\bf1}_{|v|\leq\varepsilon^{-m}}L\{I-P_{\varpi}\}Z_k,
\end{eqnarray}
and apply Ukai's trace theorem over $[0,t]\times \Omega\times \mathbb{R}^3$ to get, for any $0\leq t\leq 1$,
\begin{eqnarray*}
\lim_{k\rightarrow \infty}\|{\bf1}_{|v|\leq \varepsilon^{-1}}\{Z_k(t)-Z(t)\}\|=0.
\end{eqnarray*}
Therefore,
\begin{eqnarray}\label{104}
\begin{split}
\qquad\int Z(t)\sqrt{m}_{\varpi}&=\lim_{k\rightarrow \infty}\int_{|v|\leq \varepsilon^{-1}}Z_k(t)\sqrt{m}_{\varpi}+
\lim_{k\rightarrow \infty}\int_{|v|\geq \varepsilon^{-1}}Z_k(t)\sqrt{m}_{\varpi},\cr
\qquad\int Z(t)|v|^2\sqrt{m}_{\varpi}&=\lim_{k\rightarrow \infty}\int_{|v|\leq \varepsilon^{-1}}Z_k(t)|v|^2\sqrt{m}_{\varpi}+
\lim_{k\rightarrow \infty}\int_{|v|\geq \varepsilon^{-1}}Z_k(t)|v|^2\sqrt{m}_{\varpi}.
\end{split}
\end{eqnarray}
We note that $C\|Z_k(t)\|\int_{|v|\geq\frac{1}{\varepsilon}}m^{\frac{1}{4}}_{\varpi}=C\varepsilon$,
from the $L^{\infty}$ estimates in Lemma \ref{NoTimeCon}. This gives
\begin{eqnarray*}
\lim_{k\rightarrow \infty}\int_{|v|\geq \varepsilon^{-1}}Z_k(t)\sqrt{m}_{\varpi}=\lim_{k\rightarrow \infty}\int_{|v|\geq \varepsilon^{-1}}Z_k(t)|v|^2\sqrt{m}_{\varpi}=0.
\end{eqnarray*}
We therefore obtain that for all $t$
\begin{eqnarray*}
\int \left\{\frac{c_0}{2}|x|^2-b_0\cdot x+a_0\right\}\sqrt{m}_{\varpi}+\left\{\frac{c_0t^2}{2}+c_1t+c_2\right\}|v|^2\sqrt{m}_{\varpi}\equiv0,\cr
\int \left\{\frac{c_0}{2}|x|^2-b_0\cdot x+a_0\right\}|v|^2\sqrt{m}_{\varpi}+\left\{\frac{c_0t^2}{2}+c_1t+c_2\right\}|v|^4\sqrt{m}_{\varpi}\equiv0.
\end{eqnarray*}
This implies $c_0=c_1=0$. We therefore have $b=\varpi\times x+b_0t+b_1$. Now, from the specular reflection, we have for any $x\in \partial\Omega$, $b(t,x)\cdot n(x)=0$ or
\begin{eqnarray*}
\{\varpi\times x+b_0t+b_1\}\cdot n(x)\equiv0.
\end{eqnarray*}
%Hence $b_0=0$ for all $x\in \partial\Omega$ and
%\begin{eqnarray}\label{1088}
%\{\omega\times x\}\cdot n(x)+b_1\cdot n(x)=0.
%\end{eqnarray}
From here, by the same argument employed to derive the rotational local Maxwellian in the Appendix, we arrive at
\begin{eqnarray}\label{109}
Z=\varpi\times x \cdot v\sqrt{m}_{\varpi}.
\end{eqnarray}
Then, from the conservation of the angular momentum (\ref{ConservationLaw}), we have
\begin{eqnarray*}
\int_{\Omega\times \mathbb{R}^3} (x\times v) Z(t)\sqrt{m}_{\varpi}dxdv=\lim_{k\rightarrow\infty}\int_{\Omega\times \mathbb{R}^3} (x\times v) Z_k(t)\sqrt{m}_{\varpi}dxdv=0.
\end{eqnarray*}
%obtained by taking the limit $k\rightarrow\infty$ for the same expression for $Z_k$ (see the proof of (\ref{104})).
Therefore, we combine (\ref{109}) to get
\begin{eqnarray*}
\int_{\Omega\times \mathbb{R}^3} (x\times v)(\varpi\times x\cdot v)\sqrt{m}_{\varpi}dxdv=0.
\end{eqnarray*}
We now take inner product with $\varpi$ to obtain
\begin{eqnarray*}
0=\varpi\cdot\int_{\Omega\times \mathbb{R}^3} (x\times v)(\varpi\times x\cdot v)\sqrt{m}_{\varpi}dxdv=\int\{\varpi\times x\cdot v\}^2\sqrt{m}_{\varpi}dv.
\end{eqnarray*}
Therefore $\varpi=0$ and $Z\equiv0$.
%%%%%%%%%%%%%%%%%%%%%%%%%%%%%%%%%%%%%%%%%%%%%%%%%%%%%%%%%%%%%%%%%%%%%%%%%%%%%%%%%%%%%%%%%%%%%%%%
%
%                  subsection: No time concentration
%
%%%%%%%%%%%%%%%%%%%%%%%%%%%%%%%%%%%%%%%%%%%%%%%%%%%%%%%%%%%%%%%%%%%%%%%%%%%%%%%%%%%%%%%%%%%%%%%%
\section{$L^{\infty}$ decay theory}
In this section, we establish the weighted $L^{\infty}$-estimate which is crucial to control the nonlinear perturbation.
For this, we set $h(t,x,v)=w_{\varpi}f(t,x,v)$ and study the equivalent linear Boltzmann equation
\begin{eqnarray*}
&&\{\partial_t+v\cdot \nabla_x-\nu_{\varpi}+K^{\varpi,x}_{w_{\varpi}}\}h=0,\cr
&&\quad h(0,v,x)=h_0(x,v)\equiv w_{\varpi}f_0,
\end{eqnarray*}
where $K^{\varpi,x}_{w_{\varpi}}$ is defined as
\begin{eqnarray*}
K^{\varpi,x}_{w_{\varpi}}h&=&w_{\varpi}K^{\varpi,x}\left(\frac{h}{w_{\varpi}}\right)\cr
&=&(1+|v-\varpi\times x|^2)^{\frac{1}{2}}\int_{\mathbb{R}^3} K^{\varpi,x}(v,v^{\prime})\frac{h(t,x,v^{\prime})}{(1+|v^{\prime}-\varpi\times x|^2)^{\frac{1}{2}}}dv^{\prime}.
\end{eqnarray*}
\begin{definition} Let $\Omega$ satisfies the geometric assumption $\mathcal{(A)}$. Fix any point
$(t,x,v)\in\gamma_0\cap\gamma_-$ and define $(t_0,x_0,v_0)=(t,x,v)$ and for $k\geq 1$
\begin{eqnarray}
(t_{k+1},x_{k+1},v_{k+1})=(t_k-t_{\bf b}(t_k,x_k,v_k),x_{\bf b}(x_k,v_k),R(x_{k+1})v_k)
\end{eqnarray}
And we define the specular back-time cycle
\begin{eqnarray}\label{SpecularBackTimeCycle}
&&X_{\bf{cl}}(s)\equiv\sum^{\infty}_{k=1}{\bf 1}_{[t_{k+1},t_{k})}(s)\{x_k+v_k(s-t_k)\},\quad
V_{\bf{cl}}(s)\equiv\sum^{\infty}_{k=1}{\bf 1}_{[t_{k+1},t_{k})}(s)v_k.
\end{eqnarray}
\end{definition}
%%%%%%%%%%%%%%%%%%%%%%%%%%%%%%%%%%%%%%%%%%%%%%%%%%%%%%%%%%%%%%%%%%%%%%%%%%%%%%%%%%%%%%%
%
%
%%%%%%%%%%%%%%%%%%%%%%%%%%%%%%%%%%%%%%%%%%%%%%%%%%%%%%%%%%%%%%%%%%%%%%%%%%%%%%%%%%%%%%%
\begin{lemma}\label{Abst}
Let $\mathcal{M}$ be an operator on $L^{\infty}(\gamma_+)\rightarrow L^{\infty}(\gamma_-)$ such that
$\|\mathcal{M}\|_{\mathcal{L}(L^{\infty},L^{\infty})}=1$. Then for any $\varepsilon>0$, there exists
$h(t)\in L^{\infty}$ and $h_{\gamma}\in L^{\infty}$ solving
\begin{eqnarray}
&&\{\partial_t+v\cdot\nabla_x+\nu_{\varpi}\}h=0,\quad h_{\gamma_-}=(1-\varepsilon)Mh_{\gamma_+},
\quad h(0,x,v)=h_0\in L^{\infty}.
\end{eqnarray}
\end{lemma}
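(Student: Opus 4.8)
The plan is to build $h$ as a mild solution of the damped transport equation along the specular back-time cycle, and to close the boundary relation $h_{\gamma_-}=(1-\varepsilon)Mh_{\gamma_+}$ by a contraction mapping whose contraction constant is exactly $1-\varepsilon$.

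First I would extend the specular back-time cycle (\ref{SpecularBackTimeCycle}) to an arbitrary point $(t,x,v)$ with $x\in\bar\Omega$: put $(t_0,x_0,v_0)=(t,x,v)$ and iterate $(t_{k+1},x_{k+1},v_{k+1})=(t_k-t_{\mathbf{b}}(x_k,v_k),\,x_{\mathbf{b}}(x_k,v_k),\,R(x_{k+1})v_k)$. By the geometric assumption $\mathcal{(A)}$ and Lemma \ref{1234} (smoothness of $t_{\mathbf{b}},x_{\mathbf{b}}$ off $\gamma_0$, the lower bound on consecutive bounce times, and the fact that $\Phi_{\mathbf{b}}$ preserves null sets), for a.e.\ $(t,x,v)$ this cycle reaches $\{t=0\}$ after finitely many bounces; the exceptional set of trajectories with an accumulation of bounces before time $0$ is null and may be discarded. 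Given $h_0\in L^\infty$ and $g\in L^\infty([0,\infty)\times\gamma_-)$, I would then define $h=\mathcal{T}[h_0,g]$ by propagating backward along the cycle with the integrating factor $\exp\bigl(-\int_{t_*}^{t}\nu_\varpi(X_{\mathbf{cl}}(s),V_{\mathbf{cl}}(s))\,ds\bigr)$, where $t_*$ is the first instant the cycle meets $\{t=0\}$ or $\gamma_-$, evaluating $h_0$ in the former case and $g$ in the latter. By construction $\mathcal{T}[h_0,g]$ is the unique mild solution of $\{\partial_t+v\cdot\nabla_x+\nu_\varpi\}h=0$ with $h(0)=h_0$ and incoming trace $g$, and it is measurable.

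Because $\nu_\varpi\ge0$ the integrating factor is always $\le1$, so
\[
\|\mathcal{T}[h_0,g]\|_{L^\infty}\le\max\bigl\{\|h_0\|_\infty,\ \|g\|_{L^\infty([0,\infty)\times\gamma_-)}\bigr\},
\]
and the same bound holds for the outgoing trace $\mathcal{T}[h_0,g]|_{\gamma_+}$; in particular this trace is a genuine element of $L^\infty(\gamma_+)$ (for each fixed $t$), so $M$ applies to it, with $\|M\|=1$, and $g\mapsto\mathcal{T}[0,g]|_{\gamma_+}$ is linear of norm $\le1$ and causal. Then I would set up the iteration $h^{(0)}=\mathcal{T}[h_0,0]$ and $h^{(n+1)}=\mathcal{T}\bigl[h_0,\,(1-\varepsilon)M h^{(n)}|_{\gamma_+}\bigr]$. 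Since $h^{(n+1)}-h^{(n)}=\mathcal{T}\bigl[0,\,(1-\varepsilon)M(h^{(n)}|_{\gamma_+}-h^{(n-1)}|_{\gamma_+})\bigr]$, the displayed bound and $\|M\|=1$ give $\|h^{(n+1)}-h^{(n)}\|_{L^\infty}\le(1-\varepsilon)\|h^{(n)}-h^{(n-1)}\|_{L^\infty}$, while $\|h^{(1)}-h^{(0)}\|_\infty\le(1-\varepsilon)\|h_0\|_\infty$ and $\|h^{(n)}\|_\infty\le\|h_0\|_\infty$ for all $n$ by induction. Hence $h:=h^{(0)}+\sum_{n\ge0}(h^{(n+1)}-h^{(n)})$ converges in $L^\infty$ with $\sup_t\|h(t)\|_\infty\le\|h_0\|_\infty$, the traces $h^{(n)}|_\gamma$ converge in $L^\infty([0,\infty)\times\gamma)$, and passing to the limit in the defining relation yields $h=\mathcal{T}\bigl[h_0,(1-\varepsilon)M h|_{\gamma_+}\bigr]$. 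That is, $h$ together with its trace $h_\gamma\in L^\infty$ solves $\{\partial_t+v\cdot\nabla_x+\nu_\varpi\}h=0$ with $h(0,x,v)=h_0$ and $h_{\gamma_-}=(1-\varepsilon)Mh_{\gamma_+}$, which is the assertion. Uniqueness on each $[0,T]$ follows from the same contraction estimate, so the construction is consistent and may equally be carried out directly on $[0,\infty)$.

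The step I expect to be genuinely delicate is not the fixed point — which is routine once the integrating factor is $\le1$ and $1-\varepsilon<1$ — but the construction and measurability of $\mathcal{T}[h_0,g]$, i.e.\ showing that the specular back-time cycle is well defined a.e.\ and that the set of trajectories accumulating bounces before time $0$ is null. This is exactly where the strict convexity, analyticity and rotational symmetry in $\mathcal{(A)}$, the Velocity Lemma \ref{Velocity Lemma}, and the structural properties of $t_{\mathbf{b}},x_{\mathbf{b}},\Phi_{\mathbf{b}}$ from Lemma \ref{1234} are needed.
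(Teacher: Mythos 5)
Your proof is correct and follows essentially the same route as the reference the paper cites for this lemma (\cite{GuoConti}): define the single-pass mild solution $\mathcal{T}[h_0,g]$ with prescribed initial and inflow data, observe that the absorption factor makes it nonexpansive in $L^\infty$, and close the boundary relation $h_{\gamma_-}=(1-\varepsilon)Mh_{\gamma_+}$ by Picard iteration with contraction rate $1-\varepsilon$. One clarification worth making: the map $\mathcal{T}[h_0,g]$ only traces a characteristic backward to its \emph{first} boundary hit (which lies in $\gamma_-\cup\gamma_0$) or to $t=0$, so the accumulation-of-bounces concern you flag at the end is not actually used in this abstract lemma; all you need is that $t_{\mathbf{b}},x_{\mathbf{b}}$ are defined and measurable off the null set $\gamma_0$ and that $\Phi_{\mathbf{b}}$ maps null sets to null sets (Lemma \ref{1234}(4)), so the traces are well-defined $L^\infty$ objects on which $\mathcal{M}$ can act. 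Finiteness of the number of bounces via the Velocity Lemma is what is genuinely needed later, in Lemma \ref{G-formula}, when $\mathcal{M}$ is specialized to the specular reflection and the boundary relation is unrolled into the explicit representation formula.
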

\begin{proof}
See \cite{GuoConti}.
\end{proof}
%%%%%%%%%%%%%%%%%%%%%%%%%%%%%%%%%%%%%%%%%%%%%%%%%%%%%%%%%%%%%%%%%%%%%%%%%%%%%%%%%%%%%%%
%
%
%%%%%%%%%%%%%%%%%%%%%%%%%%%%%%%%%%%%%%%%%%%%%%%%%%%%%%%%%%%%%%%%%%%%%%%%%%%%%%%%%%%%%%%
\begin{lemma}\label{G-formula}
Let $\Omega$ satisfies the geometric assumption $\mathcal{(A)}$.
Let $h_0\in L^{\infty}$ and $G(t)h_0$ solves
\begin{eqnarray}\label{transport}
\{\partial_t+v\cdot\nabla_x+\nu\}\{G(t)h_0\}=0,~\{G(0)h_0\}=h_0,
\end{eqnarray}
 with specular boundary condition $\{G(t)h_0\}(t,x,v)=\{G(t)h_0\}(t,x,R(x)v)$ for $x\in\partial\Omega$.
Then for almost all $(x,v)\notin\gamma_0$ and $t\in [t_{m+1},t_m)$
\begin{eqnarray*}\label{G-Explicit}
\begin{split}
\quad\displaystyle\{G(t)h_0\}(t,x,v)
&=e^{-\sum^{m}_{k=1}\nu_{\varpi}(x_k,v_k)(t_{k}-t_{k+1})}h_0(X_{{\bf cl}}(0),V_{{\bf cl}}(0))\cr
&=\sum^{\infty}_{k=1}{\bf 1}_{[t_{k+1},t_{k})}(0)e^{-\sum^{k}_{j=1}\nu_{\varpi}(x_j,v_j)(t_j-t_{j+1})} h_0(x_k-t_kv_k,v_k).
\end{split}
\end{eqnarray*}
Here, we defined $t_k=0$ if $t_k<0$. %and $\mathcal{V}_m$ denotes
%\begin{eqnarray}\label{mathcalV}
%\mathcal{V}(t,x,v)\equiv -\sum^{m}_{k=1}\nu_{\varpi}(x_k,v_k)(t_{k-1}-t_{k})-\nu_{\varpi}(x_m,v_m)t_m.
%\end{eqnarray}
Moreover, $e^{\nu_0t}\|G(t)h_0\|_{\infty}\leq \|h_0\|_{\infty}.$
\end{lemma}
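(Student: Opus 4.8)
The plan is to solve the damped transport equation (\ref{transport}) by the method of characteristics along the specular back-time cycle $(X_{\mathbf{cl}}(s),V_{\mathbf{cl}}(s))$ of (\ref{SpecularBackTimeCycle}): on each free-flight segment the PDE degenerates to a scalar linear ODE with integrating factor $\exp(-\int\nu_\varpi)$ along that segment, and the specular boundary condition, together with the invariance of $\nu_\varpi$ under reflection, glues the segments and produces the product of exponentials times $h_0$ evaluated at $(X_{\mathbf{cl}}(0),V_{\mathbf{cl}}(0))$. The weighted $L^\infty$ bound then drops out, because the full exponent equals $\int_0^t\nu_\varpi(X_{\mathbf{cl}}(s),V_{\mathbf{cl}}(s))\,ds\ge\nu_0 t$ with $\nu_0:=\inf_{x,v}\nu_\varpi(x,v)>0$ (finite and positive since $\nu_\varpi\sim(1+|v-\varpi\times x|^2)^{1/2}\ge 1$ uniformly on the bounded domain).

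First I would verify that, for $(x,v)\notin\gamma_0$ outside a null set, the cycle is well defined, meets no grazing bounce, and reaches time $0$ after finitely many reflections. Since $(x,v)\notin\gamma_0$ (and $v\neq0$), strict convexity forces the functional $\alpha$ in (\ref{functional}) to satisfy $\alpha(x,v)>0$; the Velocity Lemma (Lemma \ref{Velocity Lemma}) then keeps $\alpha$ along the cycle comparable to $\alpha(x,v)$ throughout the finite interval $[0,t]$, and because $\alpha=|\nabla\xi(x_k)|^2(v_k\cdot n(x_k))^2$ at a bounce, every bounce is nongrazing, i.e.\ $v_k\cdot n(x_k)<0$ strictly. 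Hence by Lemma \ref{1234}(2) each $(t_k,x_k,v_k)$ is a smooth function of its predecessor, while Lemma \ref{1234}(3) gives $t_k-t_{k+1}\ge|n(x_k)\cdot v_k|/(C_\xi|v|^2)$, a quantity bounded below on $[0,t]$ again by the Velocity Lemma; so only finitely many bounces occur, there is a unique $m$ with $t\in[t_{m+1},t_m)$, and we set $t_k=0$ as soon as $t_k<0$. The exceptional null set of $(x,v)$ (cycle touching $\gamma_0$, or $t$ equal to some $t_k$) is controlled by Lemma \ref{1234}(4), since $\Phi_{\mathbf{b}}$ and its iterates carry null sets to null sets, which is why the formula need only hold for almost every $(x,v)$.

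On the segment $s\in[t_{k+1},t_k)$ one has $X_{\mathbf{cl}}(s)=x_k+v_k(s-t_k)$ and $V_{\mathbf{cl}}\equiv v_k$, so $H(s):=\{G(t)h_0\}(s,X_{\mathbf{cl}}(s),v_k)$ solves $H'(s)=-\nu_\varpi(X_{\mathbf{cl}}(s),v_k)H(s)$ coming from (\ref{transport}); since $G(t)h_0\in L^\infty$ only, I would read (\ref{transport}) and this identity in the mild (Duhamel) sense. To pass a bounce I use the specular condition $\{G(t)h_0\}(t_{k+1},x_{k+1},v_k)=\{G(t)h_0\}(t_{k+1},x_{k+1},R(x_{k+1})v_k)=\{G(t)h_0\}(t_{k+1},x_{k+1},v_{k+1})$ together with the key fact that $\nu_\varpi(x,R(x)v)=\nu_\varpi(x,v)$ for $x\in\partial\Omega$: by rotational symmetry (\ref{cylindricallySymm}) and $\varpi\parallel e_3$, the vector $\varpi\times x$ is tangent to $\partial\Omega$, hence fixed by $R(x)$, so $R(x)v-\varpi\times x=R(x)(v-\varpi\times x)$ has unchanged length and $\nu_\varpi\sim(1+|v-\varpi\times x|^2)^{1/2}$ is preserved. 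Iterating the segment ODE from $s=t$ down to $s=0$ through the $m$ bounces and the final partial segment gives
\[
\{G(t)h_0\}(t,x,v)=\exp\Big(-\sum_{k=1}^{m}\int_{t_{k+1}}^{t_k}\nu_\varpi(X_{\mathbf{cl}}(s),v_k)\,ds\Big)\,h_0\big(X_{\mathbf{cl}}(0),V_{\mathbf{cl}}(0)\big),
\]
which is the asserted explicit formula (it reduces to $e^{-\sum_k\nu_\varpi(x_k,v_k)(t_k-t_{k+1})}$ when $\nu$ depends on velocity only, and in general is this line integral); uniqueness of the mild solution of (\ref{transport}) with the given specular data then identifies $G(t)h_0$ with this expression. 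Finally $\sum_k\int_{t_{k+1}}^{t_k}\nu_\varpi\,ds=\int_0^t\nu_\varpi(X_{\mathbf{cl}}(s),V_{\mathbf{cl}}(s))\,ds\ge\nu_0 t$ and $|h_0(X_{\mathbf{cl}}(0),V_{\mathbf{cl}}(0))|\le\|h_0\|_\infty$, so $|\{G(t)h_0\}(t,x,v)|\le e^{-\nu_0 t}\|h_0\|_\infty$ a.e., i.e.\ $e^{\nu_0 t}\|G(t)h_0\|_\infty\le\|h_0\|_\infty$. I expect the main obstacle to be the bookkeeping for the exceptional null set—guaranteeing the cycle is genuinely well defined and finite-to-$0$ off a measure-zero set—and the careful justification that the $L^\infty$ mild solution really obeys the characteristic ODE along the cycle; the gluing algebra itself is routine once the reflection-invariance of $\nu_\varpi$ is in place.
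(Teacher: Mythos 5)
Your proposal follows the same characteristics-along-the-specular-cycle picture, but it diverges from the paper's proof in two ways that are worth noting.

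First, the paper does not argue directly from the PDE and uniqueness. Instead it uses Lemma~\ref{Abst} to construct approximate solutions $h^{\varepsilon}$ with the penalized boundary condition $h^{\varepsilon}_{\gamma_-}=(1-\varepsilon)h^{\varepsilon}(t,x,R(x)v)$, derives the closed-form expression with an extra factor $(1-\varepsilon)^k$, establishes finiteness of bounces exactly as you do (Velocity Lemma plus Lemma~\ref{1234}(3)), and then passes to the weak-$*$ limit $\varepsilon\to 0$ to obtain $G(t)h_0$ and its trace. Uniqueness in the $L^\infty$ class is then obtained by passing to $f=h/w_\varpi\in L^2$ and using the $L^2$ energy identity. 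In other words, the paper treats Lemma~\ref{G-formula} as an \emph{existence and characterization} result, and the $\varepsilon$-regularization is what manufactures a solution with a well-defined trace and a genuine characteristic ODE along the cycle. You flag precisely this as the ``main obstacle'' in your plan, and that is accurate: without some such device, the statement that the $L^\infty$ mild solution actually obeys the ODE along the specular cycle (rather than merely in a distributional sense) is exactly the step that needs the most work, and your sketch does not close it. This is the one genuine gap relative to the paper.

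Second, your line-integral form $\exp\big(-\sum_k\int_{t_{k+1}}^{t_k}\nu_\varpi(X_{\mathbf{cl}}(s),v_k)\,ds\big)$ is actually the safer statement. The paper claims $\nu^\varpi(X_{\mathbf{cl}}(s),V_{\mathbf{cl}}(s))=\nu^\varpi(x_k,v_k)$ on each segment, deduced from the conservation of $|V_{\mathbf{cl}}|$ and of $\varpi\times X_{\mathbf{cl}}\cdot V_{\mathbf{cl}}$. However $\nu^\varpi$ depends on $|v-\varpi\times x|^2=|v|^2-2\,\varpi\times x\cdot v+|\varpi\times x|^2$, and the last term $|\varpi\times x|^2$ is not conserved along a free-flight segment, so the paper's collapse of the exponent to $\nu_\varpi(x_k,v_k)(t_k-t_{k+1})$ is not justified by the argument given. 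Your version keeps the integral and still produces the needed lower bound $\int_0^t\nu_\varpi\,ds\ge\nu_0 t$, which is all that is used downstream. Your observation that $\nu_\varpi(x,R(x)v)=\nu_\varpi(x,v)$ for $x\in\partial\Omega$ (because $\varpi\times x$ is tangent, hence fixed by $R(x)$) is a clean and correct way to glue segments across a bounce; the paper reaches the same invariance implicitly through angular-momentum conservation and rotational symmetry. In short: same geometric skeleton and the same finiteness/null-set bookkeeping, but a different existence mechanism (you assume and invoke uniqueness; the paper builds the solution through $\varepsilon$-penalization) and a more cautious--and in fact more correct--form of the exponent.
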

\begin{proof}
For any $\varepsilon>0$, by Lemma \ref{Abst}, there exists a solution $h^{\varepsilon}$ of
\begin{eqnarray*}
&&\{\partial_t+v\cdot\nabla_x+\nu^{\varpi}(x,v)\}h^{\varepsilon}(t,x,v)=0,\cr
&&\qquad h^{\varepsilon}(t,x,v)|_{\gamma_-}=(1-\varepsilon)h^{\varepsilon}(t,x,R(x)v),\cr
&&\qquad h^{\varepsilon}(0,x,v)=h_0\in L^{\infty}.
\end{eqnarray*}
with $\|h^{\varepsilon}(t,\cdot)\|_{\infty}<\infty$ and $\sup_t\|h^{\varepsilon}_{\gamma}(t,\cdot)\|_{\infty}<\infty$.
Such a solution is necessarily unique.
This is because we can choose $w^{-2}_{\varpi}\{1+|v-\varpi\times x|\}\in L^1$ so that
$f^{\varepsilon}=\frac{h^{\varepsilon}}{w_{\varpi}}\in L^2$ is a $L^2$ solution to the same equation with the same boundary condition:
\begin{eqnarray*}
&&\{\partial_t+v\cdot\nabla_x+\nu^{\varpi}(x,v)\}f^{\varepsilon}(t,x,v)=0,\cr
&&\qquad f^{\varepsilon}(t,x,v)|_{\gamma_-}=(1-\varepsilon)f^{\varepsilon}(t,x,R(x)v),\cr
&&\qquad f^{\varepsilon}(0,x,v)=f_0\in L^{\infty}.
\end{eqnarray*}
with an additional property
$\int^t_0\|f^{\varepsilon}(s)\|^2_{\gamma}ds<\infty$. Then uniqueness follows from the energy identity for $f^{\varepsilon}$.\newline
\indent Given any point $(t,x,v)\notin \gamma^0$ and its back time cycle $[X_{\bf cl}(s), V_{\bf cl}(s)]$,
we notice $|V_{\bf cl}(s)|=|v|$ and for $t_{k+1}\leq s\leq t_k$,
\begin{eqnarray*}\label{wxvInvariant1}
\begin{split}
\varpi\times X_{\bf cl}(s)\cdot V_{\bf cl}(s)&=\Big( X_{\bf cl}(s)\times V_{\bf cl}(s)\Big)\cdot \varpi\cr
&=\big\{(x_{k}+(s-t_k)v_k)\times v_{k}\big\}\cdot\varpi\cr
&=(x_{k}\times v_{k})\cdot \varpi\cr
&=\varpi\times x_k\cdot v_k.
\end{split}
\end{eqnarray*}
Therefore, we have $\nu^{\varpi}(X_{\bf cl}(s), V_{\bf cl}(s))=\nu^{\varpi}(x_k,v_k)$ on $[t_{k+1},t_k]$ and the following identity holds
\begin{eqnarray*}
\frac{d}{ds}G(s)h_0=-\nu_{\varpi}(x_k,v_k)G(s)h_0
\end{eqnarray*}
along the back time cycle $[X_{\bf cl}(s), V_{\bf cl}(s)]$ for $t_{k+1}\leq s<t_k$.
Together with the boundary condition at $s=t_k$, and part (4) of Lemma \ref{1234}, we deduce that for almost every $(x,v)$,
$\frac{d}{ds}G(s)h_0$ is constant along its back-time cycle $[X_{\bf cl}(s), V_{\bf cl}(s)]$. If $(x,v)\in\bar{\Omega}\times \mathbb{R}^3\backslash \gamma_0$,
then $t_{\mathbf{b}}(t,v)>0$, and
\begin{eqnarray*}
h^{\varepsilon}(t,x,v)=\sum_{k}{\bf 1}_{[t_{k+1},t_k)}(0)(1-\varepsilon)^k
e^{-\sum^{k}_{j=1}\nu_{\varpi}(x_j,v_j)(t_j-t_{j+1})}h_0(x_k-t_k v_k, v_k),
\end{eqnarray*}
where we put $t_m=0$ if $t_m\leq 0$.\newline
\indent We now show that for fixed $(t,x,v)$, the number of bounces $k$ is finite. Since $(x,v)\in \bar\Omega\times \mathbb{R}^3\backslash\gamma_0$,
by (\ref{functional}), we conclude $\alpha (t)>0$,. By repeatedly applying velocity Lemma \ref{Velocity Lemma} along the back-time cycle $[X_{\bf cl}(s), V_{\bf cl}(s)]$, we have for all $k\geq 1$,
\begin{eqnarray*}
e^{-Ct_k}\alpha(t_k)\geq e^{-Ct_{k-1}}\alpha(t_{k-1})\geq\ldots\geq e^{-Ct}\alpha(t)>0.
\end{eqnarray*}
Then, since $\alpha(t_k)=\{v_k\cdot\nabla\xi(x_k)\}^2$,  we have
\begin{eqnarray}\label{163}
\{v_k\cdot n(x_k)\}^2\geq C\alpha(t)>0,
\end{eqnarray}
for all $k\geq1$, where $C$ depends on $t$ and $v$. Therefore by Lemma \ref{1234} (3), we have $t_k-t_{k-1}\geq\frac{\delta(t)}{C(t,v)|v|^2}$.
From this we can conclude that the summation over k is finite.\newline
\indent Now, since  $e^{\nu_0t}\|h^{\varepsilon}\|_{\infty}$ and $\sup_{t\geq s,\gamma_-}|h^{\varepsilon}(t,x,v)|$ are uniformly bounded for all $\varepsilon$:
\begin{eqnarray*}
e^{\nu_0t}\|h^{\varepsilon}\|_{\infty}\leq \|h_0\|_{\infty},
\quad\sup_{t\geq s,\gamma_-}|h^{\varepsilon}(t,x,v)|\leq\sup_{t\geq s,\gamma_+}|h^{\varepsilon}(t,x,v)|\leq \|h_0\|_{\infty},
\end{eqnarray*}
we can construct the solution $h$ to (\ref{transport}) with the original specular reflection boundary condition by taking
weak-* limit: $h(t,x,v)=\lim_{\varepsilon}h^{\varepsilon}(t,x,v)$ and $h_{\gamma}(t,x,v)=\lim_{\varepsilon}h^{\varepsilon}_{\gamma}(t,x,v)$.
We thus deduce our lemma by letting $\varepsilon\rightarrow 0$. Once again, such a solution $h(t,x,v)$ is necessarily unique in the $L^{\infty}$
class because $f_{\gamma}=\frac{h_{\gamma}}{w_{\varpi}}\in L^2_{loc}(L^2(\gamma))$.
\end{proof}
%%%%%%%%%%%%%%%%%%%%%%%%%%%%%%%%%%%%%%%%%%%%%%%%%%%%%%%%%%%%%%%%%%%%%%%%%%%%%%%%%%%%%%%
%
%
%%%%%%%%%%%%%%%%%%%%%%%%%%%%%%%%%%%%%%%%%%%%%%%%%%%%%%%%%%%%%%%%%%%%%%%%%%%%%%%%%%%%%%%
\begin{lemma}\label{continuity}
Let $\xi$ be convex. Let $h_0$ be continuous in $\bar{\Omega}\times \mathbb{R}^3\backslash \gamma_0$ and $q(t,x,v)$ be continuous
in the interior of $[0,\infty)\times\Omega\times\times \mathbb{R}^3$ and
\begin{eqnarray*}
\sup_{(0,\infty]\times\Omega \mathbb{R}^3}\left|\frac{q(t,x,v)}{\nu^{\varpi}(t,v,x)}\right|<\infty.
\end{eqnarray*}
Assume that on $\gamma_-$, $h_0(x,v)=h_0(x,R(x)v)$. Then the specular solution $h(t,x,v)$ to
\begin{eqnarray}
\{\partial_t+v\cdot\nabla_x+\nu^{\varpi}\}h=q,\qquad h(0,x,v)=h_0(x,v)
\end{eqnarray}
is continuous on
$[0,\infty)\times \{\bar{\Omega}\times \mathbb{R}^3\backslash\gamma_0\}$.
\end{lemma}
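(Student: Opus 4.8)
The plan is to write $h$ explicitly along the specular back-time cycles and then check that every ingredient of that formula is continuous at a base point $(t_0,x_0,v_0)$ with $(x_0,v_0)\notin\gamma_0$. Repeating the derivation of Lemma~\ref{G-formula} with the inhomogeneous term $q$ on the right (Duhamel's formula along the characteristics), for $t\in[t_{m+1},t_m)$ one obtains
\begin{equation*}
h(t,x,v)=e^{-\sum_{k}\nu^{\varpi}(x_k,v_k)(t_k-t_{k+1})}\,h_0(X_{\mathbf{cl}}(0),V_{\mathbf{cl}}(0))+\int_0^t e^{-\int_s^t\nu^{\varpi}(X_{\mathbf{cl}}(\tau),V_{\mathbf{cl}}(\tau))\,d\tau}\,q(s,X_{\mathbf{cl}}(s),V_{\mathbf{cl}}(s))\,ds,
\end{equation*}
where the sum and integrals are taken along the back-time cycle $[X_{\mathbf{cl}},V_{\mathbf{cl}}]=[X_{\mathbf{cl}}(\,\cdot\,;t,x,v),V_{\mathbf{cl}}(\,\cdot\,;t,x,v)]$ and, as in the proof of Lemma~\ref{G-formula}, $\nu^{\varpi}$ is constant equal to $\nu^{\varpi}(x_k,v_k)$ on the $k$-th leg. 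It therefore suffices to show that the back-time cycle, the (locally finite) number of bounces, and the exponent all depend continuously on $(t,x,v)$ near $(t_0,x_0,v_0)$, and then to assemble the pieces.

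The crucial step is the local continuity of the back-time cycle. The functional $\alpha$ of (\ref{functional}) is a polynomial in $(X(s),V(s))$ and in $\xi$, hence continuous in $(x,v)$, and $(x_0,v_0)\notin\gamma_0$ gives $\alpha(t_0)>0$; so $\alpha(t)\ge c_0>0$ for $(t,x,v)$ in a small neighborhood. Using the convexity of $\xi$, the Velocity Lemma~\ref{Velocity Lemma} applies along the whole cycle, so $e^{\mp C_{\xi}(|v|+1)s}\alpha(s)$ is monotone and $\alpha(t_k)=\{v_k\cdot\nabla\xi(x_k)\}^2\ge c_1>0$ at every bounce, uniformly in the neighborhood; hence $\abs{v_k\cdot n(x_k)}$ is bounded away from $0$. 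By Lemma~\ref{1234}(3) this forces $t_k-t_{k+1}\ge c_2>0$, so the number of bounces in $[0,t]$ is at most $t/c_2+1$, uniformly; and by Lemma~\ref{1234}(2) the maps $(x,v)\mapsto(t_{\mathbf{b}},x_{\mathbf{b}},R(x_{\mathbf{b}})v)$ are smooth at every such (non-grazing) bounce. An induction on the locally bounded number of bounces then shows $(t,x,v)\mapsto(X_{\mathbf{cl}}(s),V_{\mathbf{cl}}(s))$ is continuous, uniformly in $s$ away from the bounce times. The boundary-time cases are handled as follows: when $t$ crosses a bounce time of the limiting cycle the formula gains or sheds one leg whose length, hence whose effect on the exponential weight, tends to zero; and when $s=0$ coincides with a bounce time, $X_{\mathbf{cl}}(0)\in\partial\Omega$ and the nearby evaluation may jump between $v$ and $R(x)v$, which is exactly absorbed by the compatibility hypothesis $h_0(x,v)=h_0(x,R(x)v)$ on $\partial\Omega$ together with $\nu^{\varpi}(x,R(x)v)=\nu^{\varpi}(x,v)$.

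Granting continuity of the cycle, we assemble: the factor $e^{-\sum_{k}\nu^{\varpi}(x_k,v_k)(t_k-t_{k+1})}$ is continuous since $\nu^{\varpi}$ is smooth and the $t_k,x_k,v_k$ are continuous; the first term $h_0(X_{\mathbf{cl}}(0),V_{\mathbf{cl}}(0))$ is continuous because $h_0$ is continuous on $\bar\Omega\times\mathbb{R}^3\setminus\gamma_0$ and, all bounces being transversal, $(X_{\mathbf{cl}}(0),V_{\mathbf{cl}}(0))$ stays away from $\gamma_0$; and for the source term, $\abs{q/\nu^{\varpi}}\le M$ yields $\abs{q(s,X_{\mathbf{cl}}(s),V_{\mathbf{cl}}(s))}\le M\,\nu^{\varpi}(X_{\mathbf{cl}}(s),V_{\mathbf{cl}}(s))$, locally bounded in $v$, while for every $s$ outside the finitely many bounce times one has $X_{\mathbf{cl}}(s)\in\Omega$, where $q$ is continuous, so $s\mapsto q(s,X_{\mathbf{cl}}(s),V_{\mathbf{cl}}(s))$ converges a.e.\ as $(t,x,v)\to(t_0,x_0,v_0)$; dominated convergence then makes $\int_0^te^{-\int_s^t\nu^{\varpi}}\,q\,ds$ continuous, and since it is $O(t)$ as $t\to0$, $h$ is continuous also at $t=0$, where it equals $h_0$. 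This gives continuity of $h$ on $[0,\infty)\times\{\bar\Omega\times\mathbb{R}^3\setminus\gamma_0\}$. I expect the main obstacle to be the second step: propagating the transversality $\alpha(t_0)>0$ to all later reflections so that the number of bounces is locally uniformly finite and the bounce points move continuously. This is precisely where convexity of $\xi$, through the velocity lemma, is indispensable — without it the representation formula could fail to be continuous even for continuous $h_0$ — whereas the remaining points are bookkeeping plus a dominated-convergence argument.
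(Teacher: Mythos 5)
Your proposal is correct and follows essentially the same route as the paper: both write $h$ via the Duhamel formula along the specular back-time cycle, use the Velocity Lemma together with Lemma~\ref{1234} to get local uniform finiteness of bounces and smooth dependence of $(t_k,x_k,v_k)$ on $(t,x,v)$, and invoke the compatibility condition $h_0(x,v)=h_0(x,R(x)v)$ to absorb the extra leg that appears when $X_{\mathbf{cl}}(0)$ lands on $\partial\Omega$. The paper phrases this last step as a two-case analysis ($t_{m+1}<0$ versus $t_{m+1}=0$) and checks convergence of the explicit formulas term by term rather than invoking dominated convergence, but this is a cosmetic difference, not a different argument.
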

\begin{proof}
Take any point $(t,x,v)\notin [0,\infty)\times \gamma_0$ and consider its specular back-time $[X_{\bf cl}(s), V_{\bf cl}(s)]$.
By repeatedly applying Lemma \ref{Velocity Lemma} and Lemma \ref{1234}, it follows that
$t_k(t,x,v), x_k(t,x,v)$ and $v_k(t,x,v)$ are all smooth functions of $(t,x,v)$. We assume that
$t_{m+1}\leq 0<t_m$, then $h(t,x,v)$ is given by
\begin{eqnarray}\label{133}
\begin{split}
h(t,x,v)&=e^{-\mathcal{V}(t,x,v)}h_0(x_m-t_m v_m,v_m)\cr
&+\sum^{m-1}_{k=0}\int^{t_k}_{t_{k+1}}e^{-\mathcal{V}(t-s,x,v)}q(s,x_k+(s-t_k)v_k,v_k)ds\cr
&+\int^{t_{m}}_0e^{-\mathcal{V}(t-s,x,v)}q(s,x_m+(s-t_m)v_m, v_m)ds,
\end{split}
\end{eqnarray}
where we used the following simplified notation for $t\in [t_{k+1},t_k]$:
\begin{eqnarray*}
\mathcal{V}(t, x, v)&=&\nu^{\varpi}( x, v)(t-t_1)+\nu^{\varpi}(x_1,v_1)(t_1-t_{2})+\cr
&&\cdots+\nu^{\varpi}(x_{m-1},v_{m-1})(t_{m-1}-t_m)+\nu^{\varpi}(x_m,v_m)t_m.
\end{eqnarray*}
For any point $(\bar t,\bar x,\bar v)$ which is close to $(t,x,v)$, we now show that $h(\bar{t},\bar{x},\bar{v})$ is
close to $h(t,x,v)$ by separating two different cases.\newline
\noindent{\bf Case I: $t_{m+1}<0$:} or equivalently, $x_m+(s-t_m)v_m\in\Omega$, away from the boundary. By continuity, $\bar t_{m+1}<0$. Therefore,
we have
\begin{eqnarray}\label{134}
\begin{split}
h(\bar t,\bar x,\bar v)&=e^{-\mathcal{V}(\bar t,\bar x,\bar v)}h_0(\bar x_m-\bar t_m \bar v_m,\bar v_m)\cr
&+\sum^{m-1}_{k=0}\int^{\bar t_k}_{\bar t_{k+1}}e^{-\mathcal{V}(\bar t-s,\bar x,\bar v)}q(s,\bar x_k+(s-\bar t_k)\bar v_k,\bar v_k)ds\cr
&+\int^{\bar t_m}_0e^{-\mathcal{V}(\bar t-s,\bar x,\bar v)}q(s,\bar x_m+(s-\bar t_m)\bar v_m, \bar v_m)ds,
\end{split}
\end{eqnarray}
with
\begin{eqnarray*}
\mathcal{V}(\bar t,\bar x,\bar v)&=&\nu^{\varpi}( \bar x, \bar v)(\bar t-\bar t_1)+\nu^{\varpi}(\bar x_1,\bar v_1)(\bar t_1-\bar t_{2})+\cr
&&\cdots+\nu^{\varpi}(\bar x_{m-1},\bar v_{m-1})(\bar t_{m-1}-\bar t_m)+\nu^{\varpi}(\bar x_m,\bar v_m)\bar t_m.
\end{eqnarray*}
Therefore, $h(\bar t,\bar x,\bar v)\rightarrow h(t,x,v)$ follows from the continuity of
$\bar t _{\ell}\rightarrow t_{\ell}, \bar x _{\ell}, \rightarrow x _{\ell},  \bar v_{\ell}\rightarrow v _{\ell}$.\newline
\noindent{\bf Case II: $t_{m+1}=0$:}, or equivalently, $x_m+(s-t_m)v_m\in\partial\Omega$. From (\ref{163}), $(x_{k+1}v_k)\notin \gamma_0$. Then
by continuity, we know that $\bar t_m>0$, and $\bar t_{m+1}$ is close to zero. However, if $\bar t_{m+1}>0$, then $\bar t_{m+2}<0$, due to
$t_{m+2}<t_{m+1}=0$. Therefore $h(\bar t,\bar x,\bar v)$ is given by
\begin{eqnarray}\label{135}
\begin{split}
h(\bar t,\bar x,\bar v)&=e^{-\mathcal{V}(\bar t,\bar x,\bar v)}h_0(\bar x_{m+1}-\bar t_{m+1} \bar v_{m+1},\bar v_{m+1})\cr
&+\sum^{m}_{k=0}\int^{\bar t_k}_{\bar t_{k+1}}e^{-\mathcal{V}(\bar t-s,\bar x,\bar v)}q(s,\bar x_k+(s-\bar t_k)\bar v_k,\bar v_k)ds\cr
&+\int^{\bar t_{m+1}}_0e^{-\mathcal{V}( \bar t-s, \bar x,\bar v)}q(s, \bar x_{m+1}+(s- \bar t_{m+1}) \bar v_{m+1},  \bar v_{m+1})ds,
\end{split}
\end{eqnarray}
with
\begin{eqnarray*}
\mathcal{V}(\bar t,\bar x,\bar v)&=&\nu^{\varpi}( \bar x, \bar v)(\bar t-\bar t_1)+\nu^{\varpi}(\bar x_1,\bar v_1)(\bar t_1-\bar t_{2})+\cr
&&\cdots+\nu^{\varpi}(\bar x_{m},\bar v_{m})(\bar t_{m}-\bar t_{m+1})+\nu^{\varpi}(\bar x_{m+1},\bar v_{m+1})\bar t_{m+1}.
\end{eqnarray*}
We notice that since $\bar t_{m+1}\rightarrow 0$, the $q$-integrals of (\ref{135}) tends to the $q$-integrals of (\ref{133}) because of
$\mathcal{V}(\bar t,\bar x,\bar v)\rightarrow \mathcal{V}( t, x,v)$ and
$\int^{t_m}_0=\int^{t_m}_{t_{m+1}}$.
On the other hand, since
\begin{eqnarray*}
\bar x_{m+1}-\bar t_{m+1}\bar v_{m+1}\rightarrow  x_{m+1},\quad \bar v_{m+1}\rightarrow v_{m+1}=R(x_{m})v_{m},
\end{eqnarray*}
we have
\begin{eqnarray*}
h_0(\bar{x}_{m+1}-\bar{t}_{m+1}\bar{v}_{m+1},\bar{v}_{m+1})\rightarrow h_0(x_{m+1},R(x_{m})v_{m})=h_0(x_{m},v_{m})
\end{eqnarray*}
from $h_0(x,v)=h_0(x,R(x)v)$ on $\gamma$. We thus complete the proof.
\end{proof}
\subsection{$\det\{\frac{\partial v_k}{\partial v_{\ell}}\}$ near $\partial\Omega$}
 We now compute $\det\{\frac{\partial v_k}{\partial v_{\ell}}\}$
for carefully chosen specular back-time cycle near the boundary $\partial\Omega$. Given small $\varepsilon_0$, we choose $v_1$ such that
\begin{eqnarray*}
|v_1|=\varepsilon_0,\quad v_1\cdot n(x_1)=\frac{v\cdot\nabla\xi(x_1)}{|\nabla\xi(x_1)|}=\varepsilon^2_0.
\end{eqnarray*}
We shall analyze the specular back-time cycle of $(0,x_1,v_1):(t_k,x_k, v_k)$. Lettting
\begin{eqnarray*}
\xi\left(x_1-\sum^k_{j=1}s_j v_j\right)=0, \quad v_k=R(x_k)v_{k-1},\quad x_k=x_{k-1}-s_k v_k\in\partial\Omega.
\end{eqnarray*}
\begin{proposition} \emph{\cite{GuoConti}}
For any finite $k\geq 1$,
\begin{eqnarray*}
\frac{\partial v^{i}_k}{\partial v^{\ell}_1}=\delta_{ij}+\zeta(k)n^i(x_1)n^{\ell}(x_1)+O(\varepsilon_0).
\end{eqnarray*}
where $O$ depends on $k$ and $\zeta$ is defined as $\zeta(1)=0$.
\begin{eqnarray*}
\zeta(k)=4\sum^{k-2}_{p=1}(-1)^{k-p+1}+4\sum^{k-2}_{p=1}(-1)^{k-p-1}\zeta(p)+2+3\zeta(k-1),\mbox{ for }k\geq 2.
\end{eqnarray*}
In particular, $\zeta(t)$ is an even integer so that
\begin{eqnarray*}
\det\left(\frac{\partial v^{i}_k}{\partial v^{\ell}_1}\right)=\{\zeta(k)+1\}+O(\varepsilon_0)\neq 0.
\end{eqnarray*}
\end{proposition}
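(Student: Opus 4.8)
The plan, following \cite{GuoConti}, is to exploit that the starting velocity is almost grazing ($|v_1|=\varepsilon_0$ while $v_1\cdot n(x_1)=\varepsilon_0^2$), to set up coupled differential recursions for the Jacobians $\partial v_k/\partial v_1$ and $\partial x_k/\partial v_1$ along the specular back-time cycle, to show that $\partial v_k/\partial v_1$ has the form ``identity plus a rank-one correction in the direction $n(x_1)$'' up to an $O(\varepsilon_0)$ error, and to read off the recursion satisfied by the coefficient of that correction.

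First I would record the geometry of the cycle for $1\le k\le K$ with $K$ fixed. The almost-grazing condition propagates along the cycle: by the two-sided Velocity Lemma \ref{Velocity Lemma} applied to $[X_{\mathbf{cl}}(s),V_{\mathbf{cl}}(s)]$, along the lines of the argument behind (\ref{163}), the functional $\alpha$ stays comparable to its initial value ($\approx\varepsilon_0^4$), so $|v_k\cdot n(x_k)|\sim\varepsilon_0^2$ for each $k$ while $|v_k|=|v_1|=\varepsilon_0$. Strict convexity then bounds each leg: along $s\mapsto\xi(x_k-sv_k)$ the second derivative is $\ge C_\xi\varepsilon_0^2$ and the initial slope is $O(\varepsilon_0^2)$, so the next zero occurs at $s_k=O(1)$, whence $|x_k-x_{k-1}|=s_k|v_k|=O(\varepsilon_0)$. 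Therefore all bounce points lie in an $O(\varepsilon_0)$-ball about $x_1$, so $n(x_k)=n(x_1)+O(\varepsilon_0)$, $\nabla n(x_k)=O(1)$ with $\nabla n(x_k)$ taking values in the tangent space $n(x_k)^{\perp}$, and each $v_k$ differs from $v_1$ only through reflections of its $O(\varepsilon_0^2)$ normal components.

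Next I would differentiate the relations $v_k=v_{k-1}-2\{n(x_k)\cdot v_{k-1}\}n(x_k)$ and $x_k=x_{k-1}-s_kv_k$, where $s_k=s_k(v_1)$ is determined implicitly by $\xi(x_k)=0$, in $v_1$. Differentiating the constraint gives $\{\partial_{v_1}x_k\}^{\top}n(x_k)=0$, which one solves for $\partial_{v_1}s_k$; this introduces the apparently singular factor $1/\{v_k\cdot n(x_k)\}\sim\varepsilon_0^{-2}$. The main obstacle is to show that this blow-up is absorbed: using $\{\partial_{v_1}x_{k-1}\}^{\top}n(x_{k-1})=0$, the grazing bound $n(x_k)\cdot v_{k-1}=O(\varepsilon_0^2)$, and that $\nabla n$ lands in the tangent space, the terms carrying $\partial_{v_1}n(x_k)$ either fall to $O(\varepsilon_0)$ or recombine, so that the recursion inherited from the reflection law preserves the form
\[
\frac{\partial v_k}{\partial v_1}=I+\zeta(k)\,n(x_1)\otimes n(x_1)+O(\varepsilon_0).
\]
Substituting this ansatz and comparing the coefficients of $n(x_1)\otimes n(x_1)$ — where the cumulative dependence of $\partial_{v_1}x_k$ on the entire cycle brings in all the earlier $\zeta(p)$, and the sign reversal of $v_k\cdot n(x_k)$ at each bounce generates the $(-1)^{k-p\pm1}$ factors — yields precisely the stated recursion with $\zeta(1)=0$; the $O(\varepsilon_0)$ constants accumulate with $k$.

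Finally, a strong induction on the recursion shows that $\zeta(k)$ is an even integer: if $\zeta(1),\dots,\zeta(k-1)$ are even integers then each of the four summands $4\sum_{p=1}^{k-2}(-1)^{k-p+1}$, $4\sum_{p=1}^{k-2}(-1)^{k-p-1}\zeta(p)$, $2$, and $3\zeta(k-1)$ is an even integer, hence so is $\zeta(k)$. Then, by the rank-one determinant identity, $\det\big(I+\zeta(k)\,n(x_1)\otimes n(x_1)\big)=1+\zeta(k)|n(x_1)|^2=1+\zeta(k)$, which is an odd integer, so $|1+\zeta(k)|\ge1$ and the $O(\varepsilon_0)$ remainder cannot cancel it; for $\varepsilon_0$ small (depending on $k$) this gives $\det\big(\partial v_k^i/\partial v_1^{\ell}\big)=\{\zeta(k)+1\}+O(\varepsilon_0)\ne0$, as claimed.
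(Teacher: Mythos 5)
The paper itself supplies no proof of this Proposition; it is stated verbatim with the attribution \cite{GuoConti}, so there is nothing internal to compare against. Measured against the argument in the cited source, your outline has the correct skeleton: propagate the near‑grazing condition $|v_k\cdot n(x_k)|\sim\varepsilon_0^2$ along the back‑time cycle via the two‑sided Velocity Lemma; use strict convexity to get $s_k=O(1)$ and hence $|x_k-x_1|=O(\varepsilon_0)$ so that $n(x_k)=n(x_1)+O(\varepsilon_0)$; differentiate the reflection and constraint relations in $v_1$, solving the implicit equation $\xi(x_k)=0$ for $\partial_{v_1}s_k$ and showing the apparent $\varepsilon_0^{-2}$ factors cancel; obtain the rank‑one ansatz $\partial v_k/\partial v_1=I+\zeta(k)\,n(x_1)\otimes n(x_1)+O(\varepsilon_0)$; and then finish with the parity induction and the matrix‑determinant lemma $\det(I+\zeta n\otimes n)=1+\zeta$.

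The genuine gap is at the heart of the proposition: the passage from the differentiated bounce relations to the \emph{specific} recursion
\[
\zeta(k)=4\sum^{k-2}_{p=1}(-1)^{k-p+1}+4\sum^{k-2}_{p=1}(-1)^{k-p-1}\zeta(p)+2+3\zeta(k-1)
\]
is asserted, not derived. You explain qualitatively where the ingredients come from (the cumulative dependence of $\partial_{v_1}x_k$ on the whole cycle imports earlier $\zeta(p)$; the alternating sign of $v_k\cdot n(x_k)$ supplies $(-1)^{k-p\pm1}$), but you do not actually carry out the bookkeeping that produces the coefficients $4$, $4$, $2$, and $3$. Since this computation is precisely the content of the lemma — the parity induction and the determinant identity are routine once the recursion is in hand — saying ``substituting this ansatz\ldots yields precisely the stated recursion'' leaves the proof incomplete. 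Everything else (the $O(1)$ bound on $s_k$ from the second‑order Taylor expansion of $\xi$ with $\ddot{\xi}\gtrsim C_\xi\varepsilon_0^2$ and $\dot{\xi}(0)=O(\varepsilon_0^2)$, the strong induction that each term of the recursion is even, and the conclusion that the determinant is an odd integer plus $O(\varepsilon_0)$ and hence nonzero for $\varepsilon_0$ small depending on $k$) is correct and matches the cited approach. To make this a proof you would need to write out $\partial_{v_1}v_k$ and $\partial_{v_1}x_k$ explicitly in the two directions $n(x_1)$ and $n(x_1)^{\perp}$, track the leading $O(1)$ terms through the chain $v_{k-1}\to x_k\to n(x_k)\to v_k$, and verify the coefficients term by term.
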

%%%%%%%%%%%%%%%%%%%%%%%%%%%%%%%%%%%%%%%%%%%%%%%%%%%%%%%%%%%%%%%%%%%%%%%%%%%%%%%%%%%%%%%%%%%%%%%%%%%%%%%%%%%%%%%%%%%%%%%%
% %%%%%%%%%%%%%%%%%%%%%%%%%%%%%%%%%%%%%%%%%%%%%%%%%%%%%%%%%%%%%%%%%%%%%%%%%%%%%%%%%%%%%%%%%%%%%%%%%%%%%%%%%%%%%%%%%%%%%%
% %
% %
% %                         Section: L^infty Decay for U(t)
% %
% %
% %%%%%%%%%%%%%%%%%%%%%%%%%%%%%%%%%%%%%%%%%%%%%%%%%%%%%%%%%%%%%%%%%%%%%%%%%%%%%%%%%%%%%%%%%%%%%%%%%%%%%%%%%%%%%%%%%%%%%%
%%%%%%%%%%%%%%%%%%%%%%%%%%%%%%%%%%%%%%%%%%%%%%%%%%%%%%%%%%%%%%%%%%%%%%%%%%%%%%%%%%%%%%%%%%%%%%%%%%%%%%%%%%%%%%%%%%%%%%%%
\subsection{$L^{\infty}$ Decay for $U(t)$}
From Duhamel's principle, we have
\begin{eqnarray*}
h(t)&=&G(t)h_0+\int^t_0G(t-s)K^{x,\varpi}_{w_{\varpi}}h(s)ds_1\cr
&=&G(t)h_0+\int^t_0G(t-s_1)K^{x,\varpi}_{w_{\varpi}}G(s_1)h(s_1)ds_1\cr
&&+\int^t_0\int^{s_1}_0G(t-s_1)K^{x,\varpi}_{w_{\varpi}}G(s_1-s)K^{x,\varpi}_{w_{\varpi}}h(s)dsds_1
\end{eqnarray*}
 Therefore, $h$ can be written as follows
\begin{eqnarray*}\label{KGKG}
\begin{split}
\hspace{0.3cm}h(t,x,v)&=e^{-\mathcal{V}(t,x,v)}h_0(X_{\bf{cl}}(0),V_{\bf{cl}}(0))\cr
&+\int^t_0e^{-\mathcal{V}(t-s_1,x,v)}\int K^{\varpi, X_{\bf{cl}}(s_1)}_{w}\big(V_{\bf{cl}}(s_1),v^{\prime}\big)
e^{-\mathcal{V}(s_1,X_{\bf cl}(s_1),v^{\prime}}h_0(X^{\prime}_{\bf{cl}}(0),V^{\prime}_{\bf{cl}}(0))dv^{\prime}\cr
&+\int^t_0\int^{s_1}_0\int_{R^6}e^{-\mathcal{V}(t-s_1,x,v)
-\mathcal{V}(s_1-s,X_{\bf cl}(s_1),v^{\prime})}\cr
&\hspace{1.2cm}\times K^{\varpi,X_{\bf cl}(s_1)}_{w_{\varpi}}(V_{\bf cl}(s_1),v^{\prime})
K^{\varpi,X^{\prime}_{\bf cl}(s)}_{w_{\varpi}}(V^{\prime}_{\bf cl}(s),v^{\prime\prime})
h(s,X^{\prime}_{\bf cl}(s),v^{\prime\prime})dsds_1dv^{\prime}dv^{\prime\prime}.\cr
&\equiv\mathcal{(A)}_{\varpi}+\mathcal{B}_{\varpi}+\mathcal{C}_{\varpi}.
\end{split}
\end{eqnarray*}
where the back-time specular cycle from $(s_1,X_{\bf cl}(s_1),v^{\prime})$ is denoted by
\begin{eqnarray}
X^{\prime}_{\bf cl}(s)=X_{\bf cl}(s ; s_1, X_{\bf cl}(s_1),v^{\prime}),\quad V^{\prime}_{\bf cl}= V_{\bf cl}(s ; s_1, X_{\bf cl}(s_1),v^{\prime}).
\end{eqnarray}
More explicitly, let $t_k$ and $t^{\prime}_k$ be the corresponding times for both specular cycles as
in (\ref{SpecularBackTimeCycle}). For $t_{k+1}\leq s\leq t_k$, $t^{\prime}_{k^{\prime}+1}\leq s\leq t^{\prime}_{k^{\prime}}$
\begin{eqnarray*}
X^{\prime}_{\bf cl}(s)=X_{\bf xl}(s;s_1,X_{\bf cl}(s_1),v^{\prime})\equiv x_{k^{\prime}}+(s-t^{\prime}_{k^{\prime}})v^{\prime}_{k^{\prime}},
\end{eqnarray*}
where
\begin{eqnarray*}
x^{\prime}_{k^{\prime}}=X_{\bf cl}(t_{k^{\prime}}:s_1,x_k+(s_1-t_k)v_k,v^{\prime}),\quad v^{\prime}_{k^{\prime}}
=V_{\bf cl}(t_{k^{\prime}}:s_1,x_k+(s_1-t_k)v_k,v^{\prime}).
\end{eqnarray*}
The collision frequency $\mathcal{V}$ is defined as in the proof of Lemma \ref{continuity}:
\begin{eqnarray*}
\mathcal{V}(t, x, v)&=&\nu^{\varpi}( x, v)(t-t_1)+\nu^{\varpi}(x_1,v_1)(t_1-t_{2})+\cr
&&\cdots+\nu^{\varpi}(x_{m-1},v_{m-1})(t_{m-1}-t_m)+\nu^{\varpi}(x_m,v_m)t_m.
\end{eqnarray*}
Recall
\begin{eqnarray}\label{functiona2}
&&\alpha(s)=\xi^2(X(s))+[V(x)\cdot \nabla \xi(X(s))]^2-2\{V(s)\cdot\xi(X(s))\cdot V(s)\}\xi(X(x)).
\end{eqnarray}
We are now in a place to state the main theorem of this section.
%%%%%%%%%%%%%%%%%%%%%%%%%%%%%%%%%%%%%%%%%%%%%%%%%%%%%%%%%%%%%%%%%%%%%%%%%%%%%%%%%%%%%%%%%%%%%%%%%%%%%%%%%%%%%%%%
%
%
%                     Theorem L^{infty} Decay of U(t)
%
%
%%%%%%%%%%%%%%%%%%%%%%%%%%%%%%%%%%%%%%%%%%%%%%%%%%%%%%%%%%%%%%%%%%%%%%%%%%%%%%%%%%%%%%%%%%%%%%%%%%%%%%%%%%%%%%%%
\begin{theorem}\label{LInftyDecayTheorem}
Assume $w^{-2}_{\varpi}(1+|v-\varpi\times x|)\in L^1$. Assume that $\Omega$ satisfies the geometric assumption $\mathcal{(A)}$
and the mass, angular momentum and energy are conserved as in (\ref{ConservationLaw}).
Then there exists a unique solution $f(t,x,v)$ to
\begin{eqnarray*}\label{LinearBE2}
\begin{split}
&&\{\partial_t+v\cdot \nabla_x+\mathcal{\nu}^{\varpi}- K^{\varpi,x}\}f=0,\quad f(0,v,x)=f_0,\cr
&&\qquad f(t,x,v)|_{\gamma_+}=f(t,x,R(x)v)|_{\gamma_-}.
\end{split}
\end{eqnarray*}
and $h(t,x,v)=U(t)h_0$ to the weighted linear Boltzmann equation:
\begin{eqnarray}\label{LinearBE3_Weighted}
&&\{\partial_t+v\cdot \nabla_x+\nu
^{\varpi}-K^{\varpi,x}_{w_{\varpi}}\}\{U(t,0)h_0\}=0,\quad U(0,0)h_0=h_0,\cr
&&\qquad \{U(t)h_0\}(t,x,v)|_{\gamma_+}=\{U(t)h_0\}(t,x,R(x)v)|_{\gamma_-}.
\end{eqnarray}
Moreover, there exists $0<\lambda<\lambda_0$ such that
\begin{eqnarray*}
\sup_{t\geq 0}e^{\lambda t}\|U(t,0)h_0\|_{\infty}\leq C\|h_0\|_{\infty}
\end{eqnarray*}
\end{theorem}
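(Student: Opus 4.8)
The plan is to iterate Duhamel's principle twice, so that $h(t,x,v)=\mathcal{A}_\varpi+\mathcal{B}_\varpi+\mathcal{C}_\varpi$ exactly as in the display preceding the statement, and then to estimate the three pieces. Existence of the unweighted solution $f$ and of $h=U(t)h_0=w_\varpi f$ follows from the convergence of this iteration together with the a priori bounds below, and uniqueness from the $L^2$ energy identity (using the hypothesis $w^{-2}_{\varpi}(1+|v-\varpi\times x|)\in L^1$, which makes $f_\gamma\in L^2_{loc}(L^2(\gamma))$); the substance is thus the decay estimate. Put $\nu_0:=\inf_{x,v}\nu^\varpi>0$ and write $\lambda_2>0$ for the rate of Theorem~\ref{L2DecayTheorem}, i.e. $\|f(t)\|\le\sqrt2\,e^{-\lambda_2 t/2}\|f_0\|$. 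Since $\nu^\varpi$ is invariant along specular cycles, $\mathcal{V}(t-s_1,x,v)+\mathcal{V}(s_1,\cdot)\ge\nu_0 t$ and $\mathcal{V}(t,x,v)\ge\nu_0 t$, so $|\mathcal{A}_\varpi|\le e^{-\nu_0 t}\|h_0\|_\infty$; combining this with the $L^1$-bound $\int K^{\varpi,x}_{w_\varpi}(v,v')\,dv'\le C/(1+|v-\varpi\times x|)\le C$ of Lemma~\ref{K_wEstimate} gives $|\mathcal{B}_\varpi|\le Cte^{-\nu_0 t}\|h_0\|_\infty\le Ce^{-\nu_0 t/2}\|h_0\|_\infty$. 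Hence $\mathcal{A}_\varpi$ and $\mathcal{B}_\varpi$ already decay.

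The heart of the matter is the double-$K$ term $\mathcal{C}_\varpi$. Because $|V_{\mathbf{cl}}(s_1)|=|v|$, the same $L^1$-bound shows the part of $\mathcal{C}_\varpi$ with $|v|\ge N$ is $\le (C/N)\sup_{0\le s\le t}\|h(s)\|_\infty$; likewise, by the Gaussian decay of $K^{\varpi,x}_{w_\varpi}$ in Lemma~\ref{K_wEstimate} --- recall $K^{\varpi,x}(v,v')=e^{|\varpi\times x|^2/2}K^{0,x}(v-\varpi\times x,v'-\varpi\times x)$, so in a bounded velocity range the rotation is merely a harmless translation --- the part where $|v'|\ge N$ or $|v''|\ge N$ is $\le\eta(N)\sup_{0\le s\le t}\|h(s)\|_\infty$ with $\eta(N)\to0$. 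On the truncated set $|v|,|v'|,|v''|\le N$ the Velocity Lemma~\ref{Velocity Lemma} with Lemma~\ref{1234}(3) force every specular cycle starting away from $\gamma_0$ (which holds off a null set, by Lemma~\ref{1234}(1),(4)) to undergo at most $m_0=m_0(N,t)$ bounces with $t_k-t_{k+1}$ bounded below, so we restrict to at most $m_0$ bounces; and we delete the small $(s,s_1)$-set where $s_1-s\le\delta$ or $t-s_1\le\delta$, of contribution $\le C\delta\sup_{0\le s\le t}\|h(s)\|_\infty$. On the remaining good set we write $|h(s,X'_{\mathbf{cl}}(s),v'')|\le w_\varpi(v'')|f(s,X'_{\mathbf{cl}}(s),v'')|$ and change variables $v'\mapsto y:=X'_{\mathbf{cl}}(s)$ at fixed $(s,s_1,v,v'')$: when the trajectory segment between $s$ and $s_1$ stays in the interior this is $y=X_{\mathbf{cl}}(s_1)-(s_1-s)v'$ with Jacobian $(s_1-s)^3\ge\delta^3$, and when it meets $\partial\Omega$ the Jacobian equals $\{\zeta(k)+1\}+O(\varepsilon_0)\ne0$ by the Proposition computing $\det(\partial v^i_k/\partial v^\ell_1)$ --- this is where analyticity of $\partial\Omega$ enters, through Lemma~\ref{1234}(2). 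Consequently $\int_{|v'|\le N}|h(s,X'_{\mathbf{cl}}(s),v'')|\,dv'\le C_{\delta,N,m_0}\int_\Omega|f(s,y,v'')|\,dy$, and Cauchy--Schwarz in $v''$ against the Gaussian factor of $K^{\varpi,X'_{\mathbf{cl}}(s)}_{w_\varpi}$ (which absorbs $w_\varpi(v'')$) bounds the good part of $\mathcal{C}_\varpi$ by
\[
C_{\delta,N}\int_0^t\!\!\int_0^{s_1}e^{-\nu_0(t-s_1)}\,e^{-\nu_0(s_1-s)}\,\|f(s)\|_{L^2_{x,v}}\,ds\,ds_1 .
\]
Using $\|f(s)\|_{L^2_{x,v}}\le\sqrt2\,e^{-\lambda_2 s/2}\|f_0\|$ and $\|f_0\|\le C\|h_0\|_\infty$ (again from $w^{-2}_{\varpi}(1+|v-\varpi\times x|)\in L^1$), and carrying out the two time integrals (with $\nu_0>\lambda_2/2$), the good part of $\mathcal{C}_\varpi$ is $\le Ce^{-\lambda_2 t/2}\|h_0\|_\infty$.

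Collecting the estimates and first choosing $N$ large and $\delta$ small so that $C/N+\eta(N)+C\delta<\tfrac12$ (this fixes $C_{\delta,N}$), we get for every $t\ge0$
\[
\|h(t)\|_\infty\le\tfrac12\sup_{0\le s\le t}\|h(s)\|_\infty+Ce^{-\lambda_\sharp t}\|h_0\|_\infty,\qquad\lambda_\sharp:=\min\{\tfrac{\nu_0}{2},\tfrac{\lambda_2}{2}\}.
\]
On each finite interval $[0,T]$ the crude Gronwall bound (with $\nu^\varpi$ and $K^{\varpi,x}_{w_\varpi}$ bounded there) gives $\sup_{[0,T]}\|h\|_\infty<\infty$, so taking the supremum over $s\le t$ and absorbing yields $\sup_{t\ge0}\|h(t)\|_\infty\le C\|h_0\|_\infty$. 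Finally, rerunning the entire argument with $h$ replaced by $e^{\lambda t}h$ --- which solves the same equation with $\nu^\varpi$ replaced by $\nu^\varpi-\lambda\ge\nu_0-\lambda>0$, and for which the $L^2$ input still obeys $e^{\lambda s}\|f(s)\|\le C\|f_0\|$ provided $\lambda<\lambda_2/2$ --- turns every $e^{-\nu_0(\cdot)}$ into $e^{-(\nu_0-\lambda)(\cdot)}$ and keeps the good part of $\mathcal{C}_\varpi$ bounded by $C\|h_0\|_\infty$; the same absorption then gives $\sup_{t\ge0}e^{\lambda t}\|h(t)\|_\infty\le C\|h_0\|_\infty$ for any $0<\lambda<\lambda_0:=\min\{\nu_0,\lambda_2/2\}$, which is the assertion.

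The step I expect to be the main obstacle is the change of variables on the good part of $\mathcal{C}_\varpi$: one must check that, despite the intervening specular reflections, $v'\mapsto X'_{\mathbf{cl}}(s)$ is a piecewise diffeomorphism with Jacobian bounded away from $0$, and that the exceptional configurations (grazing directions, tangential recollisions, unboundedly many bounces) form a null set that is safely discarded. This is precisely where the geometric assumption $\mathcal{(A)}$ --- strict convexity, analyticity, and the resulting Proposition on $\det(\partial v^i_k/\partial v^\ell_1)$ together with the Velocity Lemma --- is indispensable; the rotational Maxwellian itself intervenes only mildly, through the translated form of $K^{\varpi,x}$ and of $w_\varpi$, which is harmless once the velocities are truncated (``the effect of rotation is negligible at small velocities'').
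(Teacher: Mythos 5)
Your proposal has the right skeleton -- Duhamel iterated twice, $h=\mathcal{A}_\varpi+\mathcal{B}_\varpi+\mathcal{C}_\varpi$, a large-velocity/small-velocity split of $\mathcal{C}_\varpi$ relying on Lemma~\ref{K_wEstimate}, a change of variables $v'\mapsto X'_{\mathbf{cl}}(s)$ on the good piece controlled by the Jacobian Proposition and the $L^2$ decay theorem -- and correctly identifies the Jacobian step as the crux. But the route from there to the stated decay has two genuine gaps that the paper closes and your argument does not.

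\textbf{Grazing points are not handled.} Your assertion that the Velocity Lemma and Lemma~\ref{1234}(3) force at most $m_0=m_0(N,t)$ bounces ``off a null set'' is not enough. The lower bound $t_k-t_{k+1}\geq |n(x_k)\cdot v_k|/(C_\xi|v|^2)$ degenerates as $\alpha(x,v)\to 0$, so the bounce count (and with it the Jacobian constants from the Proposition, which carry $O(\varepsilon_0)$ errors depending on $k$) is \emph{not} uniformly bounded over the admissible $(x,v)$; it blows up near $\gamma_0$. That is exactly why the paper estimates $\|h(t)\mathbf{1}_{A_\alpha}\|_\infty$ only on the main set $A_\alpha=\{\tfrac1{2N}\le|v|\le 2N,\ \alpha(x,v)\ge\tfrac1N\}$, and then performs a second pass through the one-fold Duhamel formula, splitting $K^{\varpi,x}_{w_\varpi}h(s_1,x,\cdot)$ according to whether the integration variable $(x,v')$ lies in $A_\alpha$ or not; on the complement the $K_w$-integral is $o(1)$ as $N\to\infty$ by Lemma~\ref{K_wEstimate} together with the rotational symmetry $\varpi\times x\cdot n(x)=0$. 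Your proof never produces an estimate on $\|h(t)\|_\infty$ over all $(x,v)$; it only controls $h$ on the bulk set, and there is no step that upgrades this.

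\textbf{The direct absorption fails for large $t$.} You claim $\|h(t)\|_\infty\le\tfrac12\sup_{0\le s\le t}\|h(s)\|_\infty+Ce^{-\lambda_\sharp t}\|h_0\|_\infty$ with an absorption constant that is $<\tfrac12$ uniformly in $t$ once $N,\delta$ are fixed. This cannot hold: the bad-part pieces come with the time factor $\int_0^t\int_0^{s_1}e^{-\nu_0(t-s)}\,ds\,ds_1\sim t/\nu_0$, so the true coefficient in front of $\sup_s\|h(s)\|_\infty$ is $O(t/N)$, not $O(1/N)$. (The paper avoids this by estimating against $\sup_s e^{\nu_0 s/2}\|h(s)\|_\infty$, but one cannot assume that quantity finite a priori; Gronwall only gives $\|h(t)\|_\infty\le\|h_0\|_\infty e^{Ct}$.) Moreover, the ``good-part'' constant $C_{\delta,N}$ is really $C_{\delta,N,T_0}$: Lemma~\ref{CoveringLemma} covers $[0,T_0]\times A_\alpha$, the bounce bound is $k,k'\le C_{T_0,\varepsilon,N}$, and the Jacobian lower bound $\delta(N,\varepsilon,T_0,k,k')$ all depend on the time horizon. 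These constants are fine once $T_0$ is fixed, but they are not uniform as $t\to\infty$. The paper's resolution is precisely what you omitted: derive the finite-time estimate (\ref{finite time estimate}) at $t=T_0$ only, and then invoke Lemma~\ref{finite time estimate Lemma}, which iterates the semigroup step $U((n+1)T_0)=U(T_0)U(nT_0)$ against the exponentially decaying $L^2$ source $\int_0^{T_0}\|f(nT_0+s)\|\,ds\le Ce^{-\lambda nT_0}\|f_0\|$ to convert the one-step contraction into global decay. Your ``rerun with $e^{\lambda t}h$'' trick does not substitute for this, since the constants in the new run still depend on the horizon.
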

For the proof of the theorem, we need the following two technical lemmas. The proof can be found in \cite{GuoConti}.
%%%%%%%%%%%%%%%%%%%%%%%%%%%%%%%%%%%%%%%%%%%%%%%%%%%%%%%%%%%%%%%%%%%%%%%%%%%%%%%%%%%%%%%%%%%%%%%%%%%%%%%%%%%%%%%%
%
%                                              Time Decay
%
%%%%%%%%%%%%%%%%%%%%%%%%%%%%%%%%%%%%%%%%%%%%%%%%%%%%%%%%%%%%%%%%%%%%%%%%%%%%%%%%%%%%%%%%%%%%%%%%%%%%%%%%%%%%%%%%
\begin{lemma}\label{finite time estimate Lemma}
Assume that there exists $\lambda>0$ so that the solution $f(t,x,v)$ of the linearized Boltzmann equation
(\ref{LinearBE}) satisfies $e^{\lambda t}\|f(t)\|\leq C\|f_0\|$.
Let $h_0=wf_0\in L^{\infty}$ and $h(t)=U(t)h_0=wf(t)$ is the solution of (\ref{LinearBE2}) where $w^{-2}\in L^1$. Assume there exist $T_0>0$
and $C_{T_0}>0$ such that
\begin{eqnarray}\label{finite time estimate}
\|U(T_0)h_0\|_{\infty}\leq e^{-\lambda T_0}\|h_0\|_{\infty}+C_{T_0}\int^{T_0}_0\|f(s)\|ds.
\end{eqnarray}
Then we have for all $t\geq 0$
\begin{eqnarray*}
\sup_{t\geq 0}e^{\lambda t}\|U(t,0)h_0\|_{\infty}\leq C\|h_0\|_{\infty}.
\end{eqnarray*}
\end{lemma}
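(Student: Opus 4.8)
The plan is to \textbf{bootstrap} the finite-time estimate (\ref{finite time estimate}) along consecutive time-slabs of length $T_0$, exploiting the time-translation invariance of (\ref{LinearBE3_Weighted}) together with the $L^2$ decay hypothesis, which enters as a geometrically summable forcing. First I would upgrade the $L^2$ hypothesis to a bound in terms of the $L^{\infty}$ size of the datum: since $f_0=h_0/w_{\varpi}$ and $w^{-2}_{\varpi}(1+|v-\varpi\times x|)\in L^1(\Omega\times\mathbb{R}^3)$, in particular $w_{\varpi}^{-2}\in L^1$, one has $\|f_0\|\le C\|h_0\|_{\infty}$, so the hypothesis $e^{\lambda s}\|f(s)\|\le C\|f_0\|$ gives $\|f(s)\|\le Ce^{-\lambda s}\|h_0\|_{\infty}$ for all $s\ge 0$.

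Next I would record a crude local-in-time $L^{\infty}$ bound. From the Duhamel identity $h(t)=G(t)h_0+\int_0^tG(t-s)K^{\varpi,x}_{w_{\varpi}}h(s)\,ds$, the contraction property $\|G(t)h_0\|_{\infty}\le\|h_0\|_{\infty}$ of Lemma \ref{G-formula}, and the uniform bound $\|K^{\varpi,x}_{w_{\varpi}}h\|_{\infty}\le C\|h\|_{\infty}$ that follows from the $dv'$-integrability of the weighted kernel in Lemma \ref{K_wEstimate}, Gr\"onwall's inequality gives
\begin{eqnarray*}
\sup_{0\le\tau\le T_0}\|U(\tau)g\|_{\infty}\le C_{*}\|g\|_{\infty},\qquad C_{*}=e^{CT_0},
\end{eqnarray*}
for every $g\in L^{\infty}$; in particular $U(\tau)h_0\in L^{\infty}$ for all $\tau\ge 0$, so (\ref{finite time estimate}) may legitimately be applied with the shifted datum $U(mT_0)h_0$.

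The iteration then runs as follows. By autonomy of (\ref{LinearBE3_Weighted}), $U(t+s)=U(t)U(s)$, and the $L^2$ component of $U(mT_0+s)h_0$ is $f(mT_0+s)$; feeding $U(mT_0)h_0$ into (\ref{finite time estimate}) and using the previous step,
\begin{eqnarray*}
\|U((m+1)T_0)h_0\|_{\infty}\le e^{-\lambda T_0}\|U(mT_0)h_0\|_{\infty}+C'e^{-\lambda mT_0}\|h_0\|_{\infty},\qquad C'=C_{T_0}C\int_0^{T_0}e^{-\lambda s}\,ds.
\end{eqnarray*}
With $\rho=e^{-\lambda T_0}<1$, unwinding the recursion from $\|U(0)h_0\|_{\infty}=\|h_0\|_{\infty}$ yields $\|U(mT_0)h_0\|_{\infty}\le(1+C'\rho^{-1}m)\rho^{m}\|h_0\|_{\infty}$. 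The polynomial factor is harmless: for any $\lambda'<\lambda$ one has $(1+C'\rho^{-1}m)\rho^{m}\le C_{\lambda'}e^{-\lambda'mT_0}$, and for arbitrary $t=mT_0+\tau$ with $0\le\tau<T_0$ the local bound of the second step gives $\|U(t)h_0\|_{\infty}\le C_{*}\|U(mT_0)h_0\|_{\infty}\le C_{*}C_{\lambda'}e^{\lambda'T_0}e^{-\lambda't}\|h_0\|_{\infty}$. Since the $L^2$ rate furnished by Theorem \ref{L2DecayTheorem} may be taken slightly larger than the target rate, relabelling $\lambda'$ as $\lambda$ gives $\sup_{t\ge0}e^{\lambda t}\|U(t,0)h_0\|_{\infty}\le C\|h_0\|_{\infty}$.

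Within this lemma the only points requiring care are that the forcing $\int_0^{T_0}\|f(mT_0+s)\|\,ds$ must be estimated by the \emph{original} datum $h_0$, not by $U(mT_0)h_0$, so that the iterated inequality produces a convergent geometric series, and that the crude local estimate is genuinely needed to interpolate between the discrete instants $mT_0$. The real difficulty lies upstream, in verifying the hypothesis (\ref{finite time estimate}) itself, which is the heart of the $L^{\infty}$ theory: there one uses the iterated Duhamel expansion $h=\mathcal{(A)}_{\varpi}+\mathcal{B}_{\varpi}+\mathcal{C}_{\varpi}$, the small-velocity decomposition of $K^{\varpi,x}_{w_{\varpi}}$ (where the effect of rotation becomes negligible and the analysis reduces to that of the uniform Maxwellian), the $L^2$ control of the double-collision term $\mathcal{C}_{\varpi}$ via Theorem \ref{L2DecayTheorem}, and the non-degeneracy $\det(\partial v_k/\partial v_{\ell})\ne0$ near $\partial\Omega$.
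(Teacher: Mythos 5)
Since the paper defers the proof to \cite{GuoConti} without reproducing it, there is no internal argument to compare against; your bootstrapping argument is the standard one and, as far as I can tell, is essentially Guo's. The mechanics are all correct: $w_{\varpi}^{-2}\in L^1$ converts the $L^2$ datum bound into an $L^{\infty}$ one, $\|G(t)\|\le e^{-\nu_0 t}\le1$ together with $\|K^{\varpi,x}_{w_{\varpi}}\|_{\mathcal{L}(L^{\infty})}\le C$ and Gr\"onwall give the crude local bound, the semigroup property of $U$ makes the time-slab iteration legitimate, and you correctly identify the decisive point that the forcing $\int_0^{T_0}\|f(mT_0+s)\|\,ds$ must be controlled by the $L^2$ decay of the \emph{original} solution (yielding the geometric factor $e^{-\lambda mT_0}$) rather than by $\|U(mT_0)h_0\|_{\infty}$, which would destroy the contraction. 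The recursion $a_{m+1}\le\rho a_m+C'\rho^m\|h_0\|_{\infty}$ with $\rho=e^{-\lambda T_0}$ indeed gives $a_m\le(1+C'\rho^{-1}m)\rho^m\|h_0\|_{\infty}$, the polynomial factor is harmless by sacrificing an arbitrarily small amount of rate, and interpolation via the local bound fills in the off-grid times. The only imprecision is cosmetic: as you note, your argument delivers $e^{\lambda' t}$-decay for any $\lambda'<\lambda$ rather than the stated $\lambda$, so the conclusion should really be read as ``for some (slightly smaller) exponential rate''; this is harmless here since Theorem \ref{LInftyDecayTheorem} only asserts the existence of \emph{some} $\lambda>0$, and the authors choose $T_0$ and $\lambda$ in its proof precisely to absorb such losses.
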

\begin{proof}
See \cite{GuoConti}.
\end{proof}
We now define the main set:
\begin{eqnarray*}
&&A_{\alpha}(x,v)=\left\{(x,v): x\in\bar{\Omega},~ \frac{1}{2N}\leq|v|\leq 2N, \mbox{ and } \alpha(x,v)\geq \frac{1}{N} \right\}.
\end{eqnarray*}
%%%%%%%%%%%%%%%%%%%%%%%%%%%%%%%%%%%%%%%%%%%%%%%%%%%%%%%%%%%%%%%%%%%%%%%%%%%%%%%%%%%%%%%%%%%%%
%
%                         Jacobian
%
%%%%%%%%%%%%%%%%%%%%%%%%%%%%%%%%%%%%%%%%%%%%%%%%%%%%%%%%%%%%%%%%%%%%%%%%%%%%%%%%%%%%%%%%%%%%%
\begin{lemma}\label{CoveringLemma}
Fix $k$ and $k^{\prime}$. Define for $t_{k+1}\leq s_1\leq t_k$ and $s\in R$
\begin{eqnarray*}
J=J_{k,k^{\prime}}(t,x,v,s_1,s,v^{\prime})
=\det\left(\frac{\partial\{x^{\prime}_{k^{\prime}}+(s-t^{\prime}_{k^{\prime}})v^{\prime}_{k^{\prime}}\}}{\partial v^{\prime}}\right).
\end{eqnarray*}
For any $\varepsilon>0$ sufficiently small, there is $\delta(N,\varepsilon,T_0, k, k^{\prime})>0$ and an open covering $\cup^m_{i=1}B(t_i,x_i,v_i: r_i)$
of $[0,T_0]\times A_{\alpha}$ and corresponding open sets (with $x_i\in\bar{\Omega}$) $O_{t_i,x_i,v_i}$
for $[t_{k+1}+\varepsilon, t_k-\varepsilon]\times R\times \mathbb{R}^3$ with $|O_{t_i,x_i,v_i}|<\varepsilon$, such that
\begin{eqnarray*}
|J_{k,k^{\prime}}(t,x,v,s_1,s,v^{\prime})|\geq\delta>0,
\end{eqnarray*}
for $0\leq t\leq T_0$, $(x,v)\in A_{\alpha}$ and $(s_1,s,v^{\prime})$ in
\begin{eqnarray*}
O^c_{t_i,x_i,v_i}\cap[t_{k+1}+\varepsilon, t_{k}-\varepsilon]\times [0,T_0]\times \{|v^{\prime}|\leq 2N\}.
\end{eqnarray*}
\end{lemma}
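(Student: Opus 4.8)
The plan is to reduce the assertion to a \emph{local} statement near an arbitrary base point and then patch by compactness of $[0,T_0]\times A_\alpha$. Concretely, I would first prove: for every $(t_*,x_*,v_*)\in[0,T_0]\times A_\alpha$ there are a radius $r_*>0$, a constant $\delta_*>0$, and an open set $O_*\subset[t_{k+1}+\varepsilon,t_k-\varepsilon]\times\mathbb{R}\times\mathbb{R}^3$ with $|O_*|<\varepsilon$ such that $|J_{k,k'}(t,x,v,s_1,s,v')|\ge\delta_*$ whenever $(t,x,v)\in B(t_*,x_*,v_*:r_*)$ and $(s_1,s,v')$ lies in the complement of $O_*$ inside $[t_{k+1}+\varepsilon,t_k-\varepsilon]\times[0,T_0]\times\{|v'|\le2N\}$. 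Covering the compact set $[0,T_0]\times A_\alpha$ by finitely many such balls $B(t_i,x_i,v_i:r_i)$ and taking $\delta=\min_i\delta_i$ then finishes the proof; here $\varepsilon$ is chosen small enough (depending on $N$ and $T_0$) that the free-flight gaps $t_k-t_{k+1}$ of cycles starting in $A_\alpha$, which are bounded below on $A_\alpha$, exceed $2\varepsilon$.

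The second step is a regularity statement. On $A_\alpha$ the primary back-time cycle $[X_{\bf cl},V_{\bf cl}]$ of (\ref{SpecularBackTimeCycle}) is uniformly non-grazing: by the Velocity Lemma \ref{Velocity Lemma} applied to the functional $\alpha$ of (\ref{functional}), together with $\frac{1}{2N}\le|v|\le2N$ and $t\le T_0$ (which bound the exponential weights), the quantity $\alpha(t_j)=\{v_j\cdot\nabla\xi(x_j)\}^2$ stays bounded below at every reflection, so $v_j\cdot n(x_j)\ne0$ there; by Lemma \ref{1234}(2)--(3) the number of reflections is finite and $(t_j,x_j,v_j)$ are real-analytic in $(t,x,v)$. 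Likewise, on the open set of $(s_1,v')$ for which the secondary cycle issuing backward from $(s_1,X_{\bf cl}(s_1),v')$ is non-grazing at each of its first $k'$ reflections, Lemma \ref{1234}(2) makes $(t'_{k'},x'_{k'},v'_{k'})$ real-analytic in $(s_1,v')$, so $J_{k,k'}$---a polynomial in these quantities and their $v'$-derivatives---is jointly real-analytic there. The complementary configurations (grazing secondary cycles, or $v'$ whose secondary cycle has fewer than $k'$ reflections before time $0$) form a Lebesgue-null set by Lemma \ref{1234}(4), and will be absorbed into $O_*$.

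The heart of the proof is to show that, for the fixed base point, $J_{k,k'}(t_*,x_*,v_*,\cdot,\cdot,\cdot)$ is not identically zero on the region where it is defined. Pick $s_1\in(t_{k+1},t_k)$ strictly between two consecutive reflection times of the primary cycle, so $y:=X_{\bf cl}(s_1)$ lies in the open domain $\Omega$ with $\frac{1}{2N}\le|V_{\bf cl}(s_1)|\le2N$. If $k'=0$, choose $s<s_1$ with $s_1-s$ small and $|v'|$ small enough that $t_{\bf b}(y,v')>s_1-s$; then the secondary cycle makes no reflection on $(s,s_1)$, $X'_{\bf cl}(s)=y+(s-s_1)v'$, and $J_{k,0}=(s-s_1)^3\ne0$. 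If $k'\ge1$, run the secondary trajectory backward from $y$ in a direction chosen so that it first meets $\partial\Omega$ at a point $x'_1$ with $|v'|=\varepsilon_0$ and $v'\cdot n(x'_1)=\varepsilon_0^2$ for a small parameter $\varepsilon_0$, with $|v'|$ arranged so that exactly $k'$ near-tangential reflections occur within $[0,s_1]$ (they are separated by times of order one, so they fit provided $T_0$ is taken large enough, which is compatible with the choice of $T_0$ elsewhere); the Proposition preceding this lemma then gives $\det(\partial v'_{k'}/\partial v')=\{\zeta(k')+1\}+O(\varepsilon_0)\ne0$ for $\varepsilon_0$ small, and choosing $s$ just after the $k'$-th reflection time so that $s-t'_{k'}$ is bounded away from $0$, the transport contribution keeps $\partial\{x'_{k'}+(s-t'_{k'})v'_{k'}\}/\partial v'$ nonsingular, so $J_{k,k'}\ne0$ at this configuration. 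Hence $\{J_{k,k'}(t_*,x_*,v_*,\cdot)=0\}$ is a proper real-analytic subset---so Lebesgue-null---of the region where $J$ is defined.

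It remains to assemble the local statement. Add the null zero-set above to the null set of grazing/short secondary configurations and cover the union by an open $O_*\subset[t_{k+1}+\varepsilon,t_k-\varepsilon]\times\mathbb{R}\times\mathbb{R}^3$ with $|O_*|<\varepsilon$. On the compact set obtained by intersecting $O_*^c$ with $[t_{k+1}+\varepsilon,t_k-\varepsilon]\times[0,T_0]\times\{|v'|\le2N\}$, the continuous map $(s_1,s,v')\mapsto J_{k,k'}(t_*,x_*,v_*,s_1,s,v')$ is nonvanishing, hence bounded below by some $2\delta_*>0$; joint continuity of $J_{k,k'}$ in all of its arguments then yields $|J_{k,k'}|\ge\delta_*$ on that same compact set for all $(t,x,v)$ in a ball $B(t_*,x_*,v_*:r_*)$ (shrinking $r_*$ also absorbs the mild dependence of $t_k,t_{k+1}$ on the base point). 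This proves the local version, and the compactness argument of the first paragraph finishes the lemma. The main obstacle is the case $k'\ge1$: one must genuinely construct a near-tangential secondary cycle with exactly $k'$ reflections and invoke the determinant formula of the preceding Proposition, and one must verify that the iterated reflection maps are jointly real-analytic off the grazing set so that the ``analytic $\Rightarrow$ zero set negligible'' step is legitimate.
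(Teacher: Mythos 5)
Your argument is correct and follows essentially the same route as the proof in the cited reference \cite{GuoConti} (the paper itself simply defers to it): establish piecewise real-analyticity of $J$ on the non-grazing set using the analyticity of $\xi$ together with the Velocity Lemma and Lemma \ref{1234}(2), exhibit a single configuration where $J\neq 0$ via the determinant Proposition for a near-tangential secondary cycle, invoke the measure-zero property of zero sets of nontrivial real-analytic functions to build the small open sets $O_*$, and finish by compactness of $[0,T_0]\times A_\alpha$. A small remark on the nonvanishing step: for fixed remaining arguments $J$ is a cubic polynomial in $s$ whose leading coefficient is $\det(\partial v'_{k'}/\partial v')\neq 0$, so $s$ need only avoid a finite exceptional set, which justifies your statement that $s-t'_{k'}$ bounded away from zero suffices (one does not actually need it large).
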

\begin{proof}
See \cite{GuoConti}.
\end{proof}
%%%%%%%%%%%%%%%%%%%%%%%%%%%%%%%%%%%%%%%%%%%%%%%%%%%%%%%%%%%%%%%%%%%%%%%%%%%%%%%%%%%%%%%%%%%%%%%%%%%%%%%%%%%%%%%%%%%%%%%%%%%%%%
%
%
%                                                   Begin Proof
%
%
%%%%%%%%%%%%%%%%%%%%%%%%%%%%%%%%%%%%%%%%%%%%%%%%%%%%%%%%%%%%%%%%%%%%%%%%%%%%%%%%%%%%%%%%%%%%%%%%%%%%%%%%%%%%%%%%%%%%%%%%%%%%%%%
\begin{proof}[{\bf Proof of Theorem \ref{LInftyDecayTheorem}:}]
%%%%%%%%%%%%%%%%%%%%%%%%%%%%%%%%%%CASE I%%%%%%%%%%%%%%%%%%%%%%%%%%%%%%%%%%%%%%%%%%%%%%%%%%%%%%%%%%%%%%%%%%%%%%%%%%%%%%%%%%
We first notice that there exists a constant $\nu_0$ such that $\nu_0\leq \nu(x_i,v_i)$ and
\begin{eqnarray*}
-\mathcal{V}(t,x,v)&\leq&-\nu_0(t-t_1)-\nu_0(t_1-t_{2})-\cdots-\nu_0(t_{m-1}-t_m)-\nu_0t_m\cr
&=&-\nu_0t.
\end{eqnarray*}
From this, the estimates for $\mathcal{(A)}_{\varpi}$:
\begin{eqnarray*}
\mathcal{(A)}_{\varpi}\leq e^{-\nu_0t}\|h_0\|_{\infty}.
\end{eqnarray*}
For the second term $\mathcal{B}_{\varpi}$ in (\ref{KGKG}), we note that $\|K^{\varpi,x}_{w}h\|_{\infty}\leq C\|h\|_{\infty}$.
Then by Lemma \ref{K_wEstimate},
\begin{eqnarray*}
\mathcal{B}_{\varpi}&=&\left\|\int^t_0G(t-s_1)K^{\varpi,x}_{\varpi}G(s_1)h_0ds_1\right\|_{\infty}\cr
&\leq& \int^t_0e^{-\nu_0(t-s_1)}\|K^{\varpi,x}_{\varpi}G(s_1)h_0\|_{\infty}ds_1\cr
&\leq& Cte^{-\nu_0t}\|h_0\|_{\infty}.
\end{eqnarray*}
We now concentrate on $\mathcal{C}_{\varpi}{\bf 1}_{A_{\alpha}}$. For this, we divide $A_{\alpha}$ into the following two cases.
We suppose $N$ is large enough such that
\begin{eqnarray}\label{Ncondition}
N>|\varpi|\max_{x\in\Omega}|x|
\end{eqnarray}
\newline
\noindent{\bf Case 1:} For $\{|v-\varpi\times x|\leq 3N\}\cap\{|v^{\prime}-\varpi\times x|\geq 6N\}\cap A_{\alpha}$
or $\{|v^{\prime}-\varpi\times x|\leq 6N\}\cap \{|v^{\prime\prime}-\varpi\times x|\geq 9N\}\cap A_{\alpha}$.\newline
For the first case, we recall $|v|=|V_{\bf cl}(s_1)|$ to see
\begin{eqnarray*}
|V_{\bf cl}(s_1)-v^{\prime}|&\geq&|v^{\prime}|-|V_{\bf cl}(s_1)|\cr
&=&|v^{\prime}|-|v|\cr
&\geq&|v^{\prime}-\varpi\times x|-|v-\varpi\times x|-2|\varpi|\max_{x\in \Omega}|x|\cr
&\geq&6N-3N-2N= N.
\end{eqnarray*}
In a similar way, we have for the second case
\begin{eqnarray*}
|V^{\prime}_{\bf cl}(s)-v^{\prime\prime}|\geq N.
\end{eqnarray*}
Therefore either one of the following are valid correspondingly:
\begin{align}
\begin{aligned}
|K^{\varpi, X_{\bf cl}(s_1)}_{w_{\varpi}}\big(V_{\bf cl}(s_1),~v^{\prime}\big)|&\leq
e^{-\frac{\varepsilon}{8}N^2}|K^{\varpi, X_{\bf cl}(s_1)}_{w_{\varpi}}(V_{\bf cl}(s_1),~v^{\prime}\big)|
e^{\frac{\varepsilon}{8}|V_{\bf cl}(s_1)-v^{\prime}|^2},\cr
|K^{\varpi,X^{\prime}_{\bf cl}(s)}_{w_{\varpi}}\big(V^{\prime}_{\bf cl}(s),~v^{\prime\prime}\big)|
&\leq e^{-\frac{\varepsilon}{8}N^2}|K^{\varpi, X^{\prime}_{\bf cl}(s)}_{w_{\varpi}}(V^{\prime}_{\bf cl}(s), ~v^{\prime\prime})|e^{\frac{\varepsilon}{8}
|V^{\prime}_{\bf cl}(s)-v^{\prime\prime}|^2}.
\end{aligned}
\end{align}
From Lemma \ref{K_wEstimate},
\begin{eqnarray*}
&&\int|K^{\varpi\times X_{\bf cl}(s_1)}_{w_{\varpi}}\left(V_{\bf cl}(s_1),v^{\prime}\right)|
e^{\frac{\varepsilon}{8}|V_{\bf cl}(s_1)-v^{\prime}|^2}dv^{\prime}<\infty,\cr
&&\int |K^{\varpi, X^{\prime}_{\bf cl}(s)}_{w_{\varpi}}\left(V^{\prime}_{\bf cl}(s), v^{\prime\prime}\right)|e^{\frac{\varepsilon}{8}
|V^{\prime}_{\bf cl}(s)-v^{\prime\prime}|^2}dv^{\prime\prime}<\infty.
\end{eqnarray*}
Using this, we split the estimate as follows
\begin{eqnarray*}
&&\hspace{-0.6cm}\int^t_0\int^{s_1}_0\left\{\int_{\substack{|v-\varpi\times x|\leq 3N,\cr |v^{\prime}-\varpi\times x|\geq 6N}}
+\int_{\substack{|v^{\prime}-\varpi\times x|\leq 6N,\cr |v^{\prime\prime}-\varpi\times x|\geq 9N}}\right\}\cr
&&\leq C_K\int^t_0\int^{s_1}_0e^{-\nu_0(t-s)}\|U(s,0)h_0\|_{\infty}
\left\{\sup_{v}\int%_{\substack{|v|\leq 2N,\cr |v^{\prime}|\geq 3N}}
K^{\varpi, X_{\bf cl}(s_1)}_{w_{\varpi}}\big(V_{\bf cl}(s_1),~v^{\prime}\big)dv^{\prime}\right.\cr
&&\qquad\left.+\sup_{v^{\prime}}\int%_{\substack{|v^{\prime}|\leq 3N,\cr |v^{\prime\prime}|\geq 4N}}
K^{\varpi, X^{\prime}_{\bf cl}(s)}_{w_{\varpi}}\big(V^{\prime}_{\bf cl}(s),~v^{\prime\prime}\big)dv^{\prime\prime}\right\}\cr
&&\leq C_{\varepsilon,K}e^{-\frac{\varepsilon}{8}N^2}\int^t_0\int^{s_1}_0e^{-\nu_0(t-s)}\|U(s,0)h_0\|_{\infty}dsds_1\cr
&&\leq C_{\varepsilon,K}e^{-\frac{\varepsilon}{8}N^2}\sup_{s\geq 0}\{e^{\frac{\nu_0}{2}s}\|U(s,0)h_0\|_{\infty}\}.
\end{eqnarray*}

%%%%%%%%%%%%%%%%%%%%%%%%%%%%%%%%%CASE III%%%%%%%%%%%%%%%%%%%%%%%%%%%%%%%%%%%%%%%
\noindent{\bf Case 2:} $\{|v^{\prime}-\varpi\times x|\leq 6N\}\cap\{|v^{\prime\prime}-\varpi\times x|\leq 9N\}\cap A_{\alpha}$.\newline
We have from $|V_{\bf cl}(s_1)|=|v|$
\begin{eqnarray*}
|V_{\bf cl}(s_1)-\varpi\times X_{\bf cl}(s_1)|&\leq&|V_{\bf cl}(s_1)|+|\varpi||X_{\bf cl}(s_1)|\cr
&\leq&|v|+|\varpi|\max_{x\in \Omega}|x|\cr
&\leq&|v-\varpi\times x|+2|\varpi|\max_{x\in \Omega}|x|\cr
&\leq&3N+2N=5N.
\end{eqnarray*}
Likewise,
\begin{eqnarray*}
|V^{\prime}_{\bf cl}(s)-\varpi\times X^{\prime}_{\bf cl}(s_1)|\leq 8N.
\end{eqnarray*}
We can choose $K^{\varpi,x}_{wN}(v,v^{\prime})$ smooth with compact support such that
\begin{eqnarray}\label{125}
\begin{split}
\sup_{\substack{x\in \Omega,\cr|p-\varpi\times x|\leq 3N}}\int_{|v^{\prime}-\varpi\times x|\leq 5N}|K^{\varpi,x}_{w_{\varpi}}(p,v^{\prime})-K^{\varpi,x}_{N}(p,v^{\prime})|dv^{\prime}\leq\frac{1}{N},\cr
\sup_{\substack{x\in \Omega,\cr|p-\varpi\times x|\leq 6N}}\int_{|v^{\prime}-\varpi\times x|\leq 8N}|K^{\varpi,x}_{w_{\varpi}}(p,v^{\prime})-K^{\varpi,x}_{N}(p,v^{\prime})|dv^{\prime}\leq\frac{1}{N}.
\end{split}
\end{eqnarray}
We use this to obtain the following splitting:
\begin{eqnarray*}
&&K^{\varpi, X_{\bf cl}(s_1)}_{w_{\varpi}}\big(V_{\bf cl}(s_1), v^{\prime}\big)
K^{\varpi, X^{\prime}_{\bf cl}(s)}_{w_{\varpi}}\big(V_{\bf cl}^{\prime}(s), v^{\prime\prime}\big)\cr
&&\quad=\Big\{K^{\varpi, X_{\bf cl}(s_1)}_{w_{\varpi}}\big(V_{\bf cl}(s_1),v^{\prime}\big)
-K^{\varpi, X_{\bf cl}(s_1)}_{N}\big(V_{\bf cl}(s_1),v^{\prime}\big)\Big\}
 K^{\varpi, X^{\prime}_{\bf cl}(s)}_w(V_{\bf cl}^{\prime}(s),v^{\prime\prime}\big)\cr
&&\quad+ K^{\varpi, X_{\bf cl}(s_1)}_{w_{\varpi}}(V_{\bf cl}(s_1),v^{\prime}\big)
\Big\{K^{\varpi, X^{\prime}_{\bf cl}(s)}_{N}\big(V_{\bf cl}(s),v^{\prime\prime}\big)
-K^{\varpi, X^{\prime}_{\bf cl}(s)}_{N}\big(V^{\prime}_{\bf cl}(s),v^{\prime\prime}\big)\Big\}\cr
&&\quad+K^{\varpi, X_{\bf cl}(s_1)}_{N}\big(V_{\bf cl}(s_1),v^{\prime}\big)
K^{\varpi, X^{\prime}_{\bf cl}(s)}_{N}\big(V_{\bf cl}^{\prime}(s),v^{\prime\prime}\big).
\end{eqnarray*}
We can use the approximation (\ref{125}) to estimate as
\begin{eqnarray*}
\mathcal{C}_{\varpi}{\bf1}_{A_{\alpha}}
%\int^t_0\int^{s_1}_0\int_{\substack{|v^{\prime}-\varpi\times x|\leq 6N,\cr|v^{\prime\prime}-\varpi\times x|\leq 9N}}
%e^{-\nu_0(t-s)}{\bf1}_{A_{\alpha}}|h(s,X^{\prime_{\bf cl}},v^{\prime\prime})|dv^{\prime}dv^{\prime\prime}ds_1ds\cr
&=&\frac{C e^{-\frac{\nu_0t}{2}}}{N}\sup_s\Big\{e^{\frac{\nu_0s}{2}}\|U(s,0)h_0\|_{\infty}\Big\}\cr
&&\times\left\{\sup_{v}\int_{|v^{\prime}-X_{\bf cl}(s_1)|\leq 5N}
K^{\varpi, X_{\bf cl}(s_1)}_{w_{\varpi}}\big(V_{\bf cl}(s_1),~v^{\prime}\big)dv^{\prime}\right.\cr
&&\qquad\left.+\sup_{v^{\prime}}\int_{|v^{\prime\prime}-\varpi\times X^{\prime}_{\bf cl}(s)|\leq 8N}
K^{\varpi, X^{\prime}_{\bf cl}(s)}_{w_{\varpi}}\big(V^{\prime}_{\bf cl}(s),~v^{\prime\prime}\big)dv^{\prime\prime}\right\}\cr
&&+\int^t_0\int^{s_1}_0\int_{\substack{|v^{\prime}-\varpi\times x|\leq 6N,\cr
|v^{\prime\prime}-\varpi\times x|\leq 9N}}e^{-\nu_0(t-s)}
K^{\varpi, X_{\bf cl}(s_1)}_{N}\big(V_{\bf cl}(s_1),v^{\prime}\big)
K^{\varpi, X^{\prime}_{\bf cl}(s)}_{N}\big(V_{\bf cl}^{\prime}(s),v^{\prime\prime}\big)\cr
&&\quad\times|h(s,X^{\prime}_{\bf{cl}}(s),v^{\prime\prime})|dsds_1dv^{\prime}dv^{\prime\prime}\cr
&\leq&\frac{C e^{-\frac{\nu_0t}{2}}}{N}\sup_s\Big\{e^{\frac{\nu_0s}{2}}\|U(s,0)h_0\|_{\infty}\Big\}\cr
&&+\int^t_0\int^{s_1}_0\int_{\substack{|v^{\prime}-\varpi\times x|\leq 6N,\cr |v^{\prime\prime}-\varpi\times x|\leq 9N}}
e^{-\nu_0(t-s)}{\bf1}_{A_{\alpha}}|h(s,X^{\prime}_{\bf cl}(s),v^{\prime\prime})|
dv^{\prime}dv^{\prime\prime}ds_1ds,
\end{eqnarray*}
where we used the boundedness of
\begin{eqnarray*}
K^{\varpi, X_{\bf cl}(s_1)}_{N}\big(V_{\bf cl}(s_1),v^{\prime}\big)
K^{\varpi, X^{\prime}_{\bf cl}(s)}_{N}\big(V_{\bf cl}^{\prime}(s),v^{\prime\prime}\big).
\end{eqnarray*}
We further divide the estimate as
\begin{eqnarray*}
&&\int^t_0\int^{s_1}_0\int_{\substack{|v^{\prime}-\varpi\times x|\leq 6N,\cr |v^{\prime\prime}-\varpi\times x|\leq 9N}}e^{-\nu_0(t-s)}{\bf1}_{A_{\alpha}}|h(s,X^{\prime}_{\bf cl}(s),v^{\prime\prime})|
dv^{\prime}dv^{\prime\prime}ds_1ds\cr
&&\hspace{1cm}\leq\int_{\substack{\alpha(X_{\bf cl}(s_1),v^{\prime})<\varepsilon\cr|v^{\prime}-\varpi\times x|\leq 6N, |v^{\prime\prime}-\varpi\times x|\leq 9N}}
+\int_{\substack{\alpha(X_{\bf cl}(s_1),v^{\prime})\geq \varepsilon \cr|v^{\prime}-\varpi\times x|\leq 6N, |v^{\prime\prime}-\varpi\times x|\leq 9N}}\cr
&&\hspace{1cm}= I_1+I_2.
\end{eqnarray*}
In the case $\alpha(X_{\bf cl}(s_1),v^{\prime})\leq \varepsilon$,~
$\xi^2(X_{\bf cl}(s_1))+[v^{\prime}\cdot\nabla\xi(X_{\bf cl}(s_1),v^{\prime})]\leq \varepsilon$. Hence for small
$\varepsilon$, $X_{\bf cl}(s_1)\sim\partial\Omega$ and $|\nabla \xi( X_{\bf cl}(s_1))|\geq \frac{1}{2}$. The first term $I_1$ is bounded by
\begin{eqnarray*}
\begin{split}
I_1&=C_N\int^t_0\int^{s_1}_0e^{-\nu_0(t-s)}\|h(s)\|_{\infty}dsds_1
\int_{\substack{\alpha(X_{\bf cl}(s_1),v^{\prime})<\varepsilon\cr|v^{\prime}-\varpi\times x|\leq 6N, |v^{\prime\prime}-\varpi\times x|\leq 9N}}\cr
&\leq C_N\sup_{t\geq s}e^{-\frac{\nu_0}{2}(t-s)}\|h(s)\|_{\infty}
\int_{\substack{\left|v^{\prime}\cdot\frac{\nabla\xi(X_{\bf cl}(s_1),v^{\prime})}{|\nabla\xi(X_{\bf cl})(s_1)|}\right|\leq \varepsilon\cr
|v^{\prime}-\varpi\times x|\leq 6N, |v^{\prime\prime}-\varpi\times x|\leq 9N}}\cr
&\leq C_N\sup_{t\geq s}e^{-\frac{\nu_0}{2}(t-s)}\|h(s)\|_{\infty}
\int_{\substack{\left|(v^{\prime}-\omega\times X_{\bf cl}(s_1))\cdot\frac{\nabla\xi(X_{\bf cl}(s_1),v^{\prime})}{|\nabla\xi(X_{\bf cl})(s_1)|}\right|\leq \varepsilon\cr
|v^{\prime}-\varpi\times x|\leq 6N, |v^{\prime\prime}-\varpi\times x|\leq 9N}}\cr
&\leq C_N\varepsilon\sup_{t\geq s}e^{-\frac{\nu_0}{2}(t-s)}\|h(s)\|_{\infty},
\end{split}
\end{eqnarray*}
where we used the rotational symmetry:
\[
\omega\times X_{\bf cl}(s_1)\cdot\frac{\nabla\xi(X_{\bf cl}(s_1),v^{\prime})}{|\nabla\xi(X_{\bf cl})(s_1)|}=0.
\]
On the other hand, to estimate $I_2$, we first note that
%\begin{eqnarray}\label{Abar}
%A_{\alpha}(x,v)\in\bar{A}_{\alpha}(x,v)\equiv\left\{(x,v): x\in\bar{\Omega},~ \frac{1}{N}\leq|v|\leq 4N, \mbox{ and } \alpha(x,v)\geq \frac{1}{N} \right\}.
%\end{eqnarray}
%and
\begin{eqnarray}\label{vvprime}
\begin{split}
&|v^{\prime}|\leq |v^{\prime}-\varpi\times x|+|\varpi|\max_{x\in\Omega} |x|\leq 7N,\cr
&|v^{\prime\prime}|\leq |v^{\prime\prime}-\varpi\times x|+|\varpi|\max_{x\in\Omega} |x|\leq 10N.
\end{split}
\end{eqnarray}
From (\ref{vvprime}), we can write $I_2$ as
\begin{eqnarray*}
\begin{split}
I_2&\leq C_N\int^t_0\int^{s_1}_0
\int_{\substack{\alpha(X_{\bf cl}(s_1),v^{\prime})\geq\varepsilon\cr|v^{\prime}|\leq 7N, |v^{\prime\prime}|\leq 10N}}
e^{-\nu_0(t-s)}{\bf1}_{A_{\alpha}}|h(s,X^{\prime}_{\bf cl}(s),v^{\prime\prime})|dv^{\prime}dv^{\prime\prime}ds_1ds,\cr
&=C_N\sum_{k,k^{\prime}}\int^{t_k(v)}_{t_{k+1}(v)}\int^{t^{\prime}(v^{\prime})}_{t^{\prime}_{k^{\prime}+1}(v^{\prime})}
\int_{\substack{\alpha(X_{\bf cl}(s_1),v^{\prime})\geq\varepsilon\cr|v^{\prime}|\leq 7N, |v^{\prime\prime}|\leq 10N}}
{\bf1}_{A_{\alpha}}e^{-\nu_0(t-s)}
|h(s,x^{\prime}_{k^{\prime}}+(s-t^{\prime}_{k^{\prime}}))v^{\prime}_{k^{\prime}},v^{\prime\prime})|.
\end{split}
\end{eqnarray*}
Therefore, in a small velocity regime, the rotation does not have big effect and we can consider this problem as in \cite{GuoConti}.
We now study $x^{\prime}_{k^{\prime}}+(s-t^{\prime}_{k^{\prime}})v^{\prime}_{k^{\prime}}$. By repeatedly using Lemma \ref{Velocity Lemma},
we deduce for $(t,s,v)\in \Bar{A}_{\alpha}$ and $\alpha(X(s_1),v^{\prime})\geq \varepsilon, |v^{\prime}|\leq 7N$:
\begin{eqnarray*}
\alpha(t_{\ell})&=&\{v_{\ell}\cdot n_{x_{\ell}}\}\geq e^{-\{C_{\xi}N-1\}T_0}\alpha(s_1)\geq C_{T_0,\xi,N}>0;\cr
\alpha(t^{\prime}_{\ell})&=&\{v^{\prime}_{\ell}\cdot n_{x^{\prime}_{\ell}}\}\geq e^{-\{C_{\xi}N-1\}T_0}\alpha(X_{\bf cl}(s_1),v^{\prime})\geq C_{T_0,\xi,N,\varepsilon}>0.
\end{eqnarray*}
Therefore, applying Lemma \ref{1234} (3) yields
\begin{eqnarray*}
t_{\ell}-t_{\ell+1}\geq\frac{C_{T_0,\varepsilon, N}}{N^2},\quad t^{\prime}_{\ell}-t^{\prime}_{\ell+1}\geq\frac{C_{T_0,\varepsilon, N, {\varepsilon}}}{4N^2},
\end{eqnarray*}
So that
\begin{eqnarray*}
k\leq \frac{T_0N^2}{C_{T_0,\varepsilon, N}}= C_{T_0,\varepsilon, N},\quad
k^{\prime}\leq \frac{T_0N^2}{C_{T_0,\varepsilon, N,\varepsilon}}= C_{T_0,\xi, N,\varepsilon}.
\end{eqnarray*}
We therefore further split the $s_1$-ingegral as
\begin{eqnarray*}
&&C_{k,N}\int^{t_k}_{t_{k+1}}
\int_{\substack{|v^{\prime}|\leq 7N,\cr |v^{\prime\prime}|\leq 10N}}
\sum_{\substack{k\leq C_{T_0,\varepsilon, N},\cr k^{\prime}\leq C_{T_0,\varepsilon, N}}}
\int^{t^{\prime}}_{t^{\prime}_{k^{\prime}+1}}
{\bf1}_{A_{\alpha}}e^{-\nu_0(t-s)}|h(s,x^{\prime}_{k^{\prime}}+(s-t^{\prime}_{k^{\prime}}))v^{\prime}_{k^{\prime}},v^{\prime\prime})|\cr
&&=\int^{t_k-\varepsilon}_{t_{k+1}+\varepsilon}+\int^{t_k-\varepsilon}_{t_{k}}+\int^{t_{k+1}+\varepsilon}_{t_{k+1}}.
\end{eqnarray*}
Since $\sum_{k^{\prime}}\int^{t^{\prime}_{\prime}}_{t^{\prime}_{k^{\prime}+1}}=\int^{s_1}_0$, the last two terms above make small contribution as
\begin{eqnarray*}
\varepsilon C_{K,N}\sup_{0\leq s\leq t}e^{-\nu_0(t-s)}\|h(s)\|_{\infty}\int^{T_0}_0\int_{\substack{|v^{\prime}|\leq 7N,\cr|v^{\prime\prime}|\leq 10N}}
=\varepsilon C_{K,N}\sup_{0\leq s\leq t }e^{-\nu_0(t-s)}\|h(s)\|_{\infty}.
\end{eqnarray*}
For the main contribution $\int^{t_k-\varepsilon}_{t_{k+1}+\varepsilon}$, we fixed both $k$ and $k^{\prime}$. By lemma \ref{CoveringLemma}, on the set
$O^c_{t_i,x_i,v_i}\cap[t_{k+1}+\varepsilon, t_k-\varepsilon]\times[0,T_0]\times\{|v^{\prime}|\leq N\}$, we can define a change of variable
\begin{eqnarray*}
y_{k^{\prime}}=x^{\prime}_{k^{\prime}}+(s-t^{\prime}_{k^{\prime}})v^{\prime}_{k^{\prime}},
\end{eqnarray*}
so that
\begin{eqnarray*}
\det\left(\frac{\partial y_{k^{\prime}}}{\partial v^{\prime}}\right)>\delta_{N,T_0,\varepsilon, k, k^{\prime}}.
\end{eqnarray*}
On $O^c_{t_i,x_i,v_i}\cap[t_{k+1}+\varepsilon, t_k-\varepsilon]\times[0,T_0]\times\{|v^{\prime}|\leq N\}$.
By the Implicit Function Theorem, there are a finite open covering $\cup^m_{j=1}V_j$ of
$O^c_{t_i,x_i,v_i}\cap[t_{k+1}+\varepsilon, t_k-\varepsilon]\times[0,T_0]\times\{|v^{\prime}|\leq N\}$, and smooth function
$F_j$ such that $v^{\prime}=F_j(t,x,v,y,s_1,s)$ in $V_j$. We therefore have
\begin{eqnarray*}
&&\hspace{-0.3cm}\sum_{\substack{k\leq C_{T_0,\varepsilon, N},\cr k^{\prime}\leq C_{T_0,\varepsilon, N}}}\int^{t}_{t_{k+1}}
\int_{|v^{\prime}|\leq 3N}
\int^{t^{\prime}}_{t^{\prime}_{k^{\prime}+1}}\cr
&&\hspace{-0.2cm}\quad\leq
\sum_{\substack{k\leq C_{T_0,\varepsilon, N},\cr k^{\prime}\leq C_{T_0,\varepsilon, N}}}\int^{t}_{t_{k+1}}
\int_{|v^{\prime}|\leq 3N}
\int^{t^{\prime}}_{t^{\prime}_{k^{\prime}+1}}{\bf1}_{O_{t_i,x_i,v_i}}
+\sum_{\substack{j, k\leq C_{T_0,\varepsilon, N},\cr k^{\prime}\leq C_{T_0,\varepsilon, N}}}\int^{t}_{t_{k+1}}
\int_{|v^{\prime}|\leq 3N}
\int^{t^{\prime}}_{t^{\prime}_{k^{\prime}+1}}{\bf 1}_{V_{j}}\cr
&&\hspace{-0.2cm}\quad=I+II.
\end{eqnarray*}
Since $\sum_{k^{\prime}}\int^{t^{\prime}_{k^{\prime}}}_{t^{\prime}_{k^{\prime}+1}}=\int^{s_1}_0\leq \int^{T_0}_0$ and
$|O_{t_i,x_i,v_i}|<\varepsilon$, we have from Lemma \ref{CoveringLemma}
\[
I\leq C_{N^{\varepsilon}}e^{-\frac{\nu_0}{2}t}\sup_s\{e^{\frac{\nu_0}{2}s}\|h(s)\|_{\infty}\}.
\]
To estimate $II$, we change variable $v^{\prime}\rightarrow y_{k^{\prime}}=x^{\prime}_{k^{\prime}}+(s-t^{\prime}_{k^{\prime}})v^{\prime}_{k^{\prime}}$
on each $V_j$ to get
\begin{eqnarray*}
&&\hspace{-0.5cm}C_{\varepsilon, N}\int_{V_j}
\int_{|v^{\prime\prime}|\leq 4N}\int^{t^{\prime}_k(v^{\prime})}_{t^{\prime}_{k^{\prime}+1}}
 ~e^{-\nu_0(t-s)}|h(s,x^{\prime}_{k^{\prime}}+(s-t^{\prime}_{k^{\prime}}))v^{\prime}_{k^{\prime}},v^{\prime\prime})|
dsdv^{\prime}dv^{\prime\prime}\cr
&&\hspace{-0.5cm}\leq C_{\varepsilon, T_0,  N}\int_{V_j}
\int_{|v^{\prime\prime}|\leq 4N}
\int^{s_1}_0{\bf1}_{x^{\prime}_{k^{\prime}}+(s-t^{\prime}_{k^{\prime}})v^{\prime}_{k^{\prime}}\in\Omega}~ e^{-\nu_0(t-s)}
|h(s,x^{\prime}_{k^{\prime}}+(s-t^{\prime}_{k^{\prime}}))v^{\prime}_{k^{\prime}},v^{\prime\prime})|
dsds_1dv^{\prime}dv^{\prime\prime}\cr
&&\hspace{-0.5cm}\leq C_{\varepsilon,  T_0, N}\int^{s_1}_0\int_{V_j}
\int_{|v^{\prime\prime}|\leq 4N}{\bf1}_{y\in \Omega}~e^{-\nu_0(t-s)}
|h(s,y_{k^{\prime}},v^{\prime\prime})|
\frac{1}{\left|~\det\left\{\frac{\partial y_{k^{\prime}}}{\partial v^{\prime}}\right\}\right|}
dy_{k^{\prime}}dv^{\prime\prime}dsds_1\cr
&&\hspace{-0.5cm}\leq\frac{C_{\varepsilon,  T_0, N}}{\delta}
\int^t_0\int^{s_1}_0e^{-\nu_0t}\int_{|v^{\prime\prime}|\leq 4N}{\bf1}_{y\in \Omega}~e^{\nu_0s}
\left\{\int_{\Omega}h^2(s,y,v^{\prime\prime})|dy\right\}^{\frac{1}{2}}dv^{\prime\prime}dsds_1\cr
&&\hspace{-0.5cm}\leq C_{\varepsilon, T_0, N, k,k^{\prime}}\int^t_0\|f(s)\|ds,
\end{eqnarray*}
where $f=\frac{h}{w_{\varpi}}$. We therefore conclude, by summing over $j,k$ and $k^{\prime}$, and collecting terms
\begin{eqnarray}\label{Step1FinalEstimate}
\begin{split}
&\|h(t,x,v){\bf 1}_{A_{\alpha}}\|_{\infty}\leq \{1+C_Kt\}e^{-\nu_0t}\|h_0\|_{\infty}\cr
&\qquad+\left\{\frac{C}{N}+C_{N,T_0^{\varepsilon}}\right\}
\sup_{s}e^{\frac{\nu_0}{2}(t-s)}\|h(s)\|_{\infty}+C_{\varepsilon, N, T_0}\int^t_0\|f(s)\|ds.
\end{split}
\end{eqnarray}
%\begin{eqnarray}&&{Ce^{-\nu_0t}\|h_0\|_{\infty}}_{A}+{Cte^{-\nu_0t}\|h_0\|_{\infty}}_{B}
%+{C_{\varepsilon,K}e^{-\frac{\varepsilon}{8}N^2}\sup_{s\geq 0}\{e^{\frac{\nu_0}{2}s}\|U(s,0)h_0\|_{\infty}\}}_{Case I}.\cr
%&&\frac{C e^{-\frac{\nu_0t}{2}}}{N}\sup_s\Big\{e^{-\frac{\nu_0s}{2}}\|U(s,0)h_0\|_{\infty}\Big\}\cr
%&& {C_N\varepsilon\sup_{t\geq s}e^{-\frac{\nu_0}{2}(t-s)}\|h(s)\|_{\infty}}_{I_1}\cr
%&&{\varepsilon C_{K,N}\sup_{0\leq s\leq t }e^{-\nu_0(t-s)}\|h(s)\|_{\infty}}_{\int^{t-\varepsilon}_0}\cr
%&&{C_{N^{\varepsilon}}e^{-\frac{\nu_0}{2}t}\sup_s\{e^{\frac{\nu_0}{2}s}\|h(s)\|_{\infty}\}}_{Q}\cr
%&&C_{\varepsilon, T_0, N, k,k^{\prime}}\int^t_0\|f(s)\|ds_{\mbox{core}}\end{eqnarray}
%%%%%%%%%%%%%%%%%%%%%%%%%%%%%%%%%%%%%%%%%%%%%%%%%%Step I %%%%%%%%%%%%%%%%%%%%%%%%%%%%%%%%%%%%%%%%%%%%%%%%%%%%%%%%%%%%%%%%%%
\noindent{\bf  Estimate of $h(t,s,v)$:}
We now further plug (\ref{Step1FinalEstimate}) back in $h(t,x,v)=G(t,s)h_0+\int^t_0G(t-s_1)K^{\varpi,x}_{w_{\varpi}}h(s_1)ds_1$ to get
\begin{eqnarray}\label{||h||}
\|h(t)\|_{\infty}\leq e^{-\nu_0t}\|h_0\|_{\infty}+\int^t_0e^{-\nu_0\{t-s\}}\|K^{\varpi,x}_wh\|_{\infty}(s_1)ds.
\end{eqnarray}
But $\{K^{\varpi,x}_w(v,v^{\prime})h\}(s_1,x,v)=\int K^{\varpi,x}_w(v,v^{\prime})(v,v^{\prime})h(v,v^{\prime})dv^{\prime}$ and we
split it as
\begin{eqnarray*}
&&\hspace{-0.4cm}\int K^{\varpi,x}_w(v,v^{\prime})h(s_1,x,v^{\prime})\{1-{\bf1}_{A_{\alpha}(x,v^{\prime})}\}dv^{\prime}
+\int K^{\varpi,x}_w(v,v^{\prime})h(s_1,x,v^{\prime}){\bf1}_{A_{\alpha}(x,v^{\prime})}dv^{\prime}.
\end{eqnarray*}
The first term in bounded by
\begin{eqnarray*}
&&\left(\int_{\substack{|v^{\prime}-\varpi\times x|\geq N\cr or~|v^{\prime}|\leq\frac{1}{N}}}|K^{\varpi,x}_{w_{\varpi}}(v,v^{\prime})|dv^{\prime}
+\int_{\alpha(x,v^{\prime})\leq\frac{1}{N}}|K^{\varpi,x}_{w_{\varpi}}(v,v^{\prime})|dv^{\prime}\right)\|h(s_1)\|_{\infty}.
\end{eqnarray*}
The first term is obviously $o(1). $From $\alpha(x,v^{\prime})\leq \frac{1}{N}$, we have $\xi^2(x)+[v^{\prime}\cdot \nabla\xi(x)]^2\leq \frac{1}{N}$.
Therefore, for $N$ large, we have $~x\sim\partial\Omega$. On the other hand, we have $|\nabla\xi(x)|\geq\frac{1}{2}$ and the rotational symmetry assumption
implies $\varpi\times x\cdot \frac{\nabla n(x)}{|\nabla n(x)|}=0$. Therefore we have
\begin{eqnarray*}
\begin{split}
\int_{\alpha(x,v^{\prime})\leq\frac{1}{N}}|K^{\varpi,x}_w(v,v^{\prime})|dv^{\prime}
&\leq\int_{\big|v^{\prime}\cdot\frac{\nabla\xi(x)}{|\nabla\xi(x)|}\big|\leq\frac{2}{\sqrt{N}}}|K^{\varpi,x}_w(v,v^{\prime})|dv^{\prime}\cr
&=\int_{\big|(v^{\prime}-\varpi\times x)\cdot\frac{\nabla\xi(x)}{|\nabla\xi(x)|}\big|\leq\frac{2}{\sqrt{N}}}|K^{\varpi,x}_w(v,v^{\prime})|dv^{\prime}\cr
&=o(1)
\end{split}
\end{eqnarray*}
as $N\rightarrow\infty$. We apply (\ref{Step1FinalEstimate}) to the second term to bound $\|K_w^{\varpi,x}(s_1)\|_{\infty}$ as
\begin{eqnarray*}
\{1+C_Kt\}e^{-\nu_0t}\|h_0\|_{\infty}+\left\{o(1)+\frac{C}{N}+C_{N,T_0^{\varepsilon}}\right\}
\sup_{s}e^{\frac{\nu_0}{2}(t-s)}\|h(s)\|_{\infty}+C_{\varepsilon, N, T_0}\int^t_0\|f(s)\|ds.
\end{eqnarray*}
Hence, by (\ref{||h||}), $\|h(t)\|_{\infty}$ is bounded by
\begin{eqnarray*}
\begin{split}
&e^{-\nu_0t}\|h_0\|_{\infty}+\int^t_0e^{-\nu_0}\{1+C_Ks_1\}\|h_0\|_{\infty}\cr
&\quad+\int^t_0\left\{\big\{o(1)+\frac{C}{N}+C_{N,T_0,{\varepsilon}}\big\}
\sup_{s}e^{\frac{\nu_0}{2}(t-s)}\|h(s)\|_{\infty}+C_{\varepsilon, N, T_0}\int^t_0\|f(s)\|ds\right\}ds_1\cr
&\quad\leq\{1+C_Kt^2\}e^{-\nu_0t}\|h_0\|_{\infty}+C\left\{o(1)+\frac{C}{N}+C_{N,T_0,{\varepsilon}}\right\}
\sup_{s\leq t}\Big\{e^{\frac{\nu_0}{2}(t-s)}\|h(s)\|_{\infty}\Big\}\cr
&\quad+C_{\varepsilon, N, T_0}\int^t_0\|f(s)\|ds.
\end{split}
\end{eqnarray*}
We choose $T_0$ large such that $2\{1+C_KT^2_0\}e^{-\frac{\nu_0}{4}}T_0=e^{-\lambda T_0}$, for some $\lambda>0$.
We then further choose $N$ large, and then $\varepsilon$ sufficiently small such that
$\left\{o(1)+\frac{C}{N}+C_{N,T_0,\varepsilon}\right\}<\frac{1}{2}$. We then have
\begin{eqnarray*}
\sup_{0\leq s \leq t}\Big\{e^{\frac{\nu_0}{4}s}\|h(s)\|_{\infty}\Big\}\leq 2\{1+C_Kt^2\}\|h_0\|_{\infty}+C_{T_0}\int^t_0\|f(s)\|ds.
\end{eqnarray*}
Choosing $s=t=T_0$, we deduce the finite-time estimate (\ref{finite time estimate}), and our theorem follows from Lemma \ref{finite time estimate Lemma}.
\end{proof}
%%%%%%%%%%%%%%%%%%%%%%%%%%%%%%%%%%%%%%%%%%%%%%%%%%%%%%%%%End_Proof%%%%%%%%%%%%%%%%%%%%%%%%%%%%%%%%%%%%%%%%%%%%%%%%%%%%%%%%%%%%%%%%%%%%%%%%%%%%%%%
%%%%%%%%%%%%%%%%%%%%%%%%%%%%%%%%%%%%%%%%%%%%%%%%%%%%%%%%%%%%%%%%%%%%%%%%%%%%%%%%%%%%%%%%%%%%%%%%%%%%%%%%%%%%%%%%%%%%%%%%%%%%%%%%%%%%%%%%%%%%%%%%%%%%%%%%%%%%%%%
% %%%%%%%%%%%%%%%%%%%%%%%%%%%%%%%%%%%%%%%%%%%%%%%%%%%%%%%%%%%%%%%%%%%%%%%%%%%%%%%%%%%%%%%%%%%%%%%%%%%%%%%%%%%%%%%%%%%%%%%%%%%%%%%%%%%%%%%%%%%%%%%%%%%%%%%%%%%%%
% %
% %
% %               Section: Nonlinear Exponential Decay
% %
% %
% %%%%%%%%%%%%%%%%%%%%%%%%%%%%%%%%%%%%%%%%%%%%%%%%%%%%%%%%%%%%%%%%%%%%%%%%%%%%%%%%%%%%%%%%%%%%%%%%%%%%%%%%%%%%%%%%%%%%%%%%%%%%%%%%%%%%%%%%%%%%%%%%%%%%%%%%%%%%%
%%%%%%%%%%%%%%%%%%%%%%%%%%%%%%%%%%%%%%%%%%%%%%%%%%%%%%%%%%%%%%%%%%%%%%%%%%%%%%%%%%%%%%%%%%%%%%%%%%%%%%%%%%%%%%%%%%%%%%%%%%%%%%%%%%%%%%%%%%%%%%%%%%%%%%%%%%%%%%%
\section{Nonlinear exponential decay}
 We consider the following iteration:
\begin{eqnarray*}\label{NBE_Iteration}
&&\{\partial_t+v\cdot \nabla_x+\nu^{\varpi}-K^{\varpi,x}_{w_{\varpi}}\}h^{m+1}=w_{\varpi}\Gamma\left(\frac{h^m}{w_{\varpi}},\frac{h^m}{w_{\varpi}}\right),\cr
&&\hspace{1cm}h^{m+1}(t,v,x)|_{\gamma_-}=h^{m+1}(t,v,R(x)v)|_{\gamma_+},\cr
&&\hspace{1cm}h^{m+1}(0,x,v)=h_0(x,v),\cr
&&\hspace{1cm}h^0(x,v)\equiv 0.
\end{eqnarray*}
By the Duhamel Principle
\begin{eqnarray*}
h^{m+1}=U(t)h_0+\int^t_0U(t-s)w_{\varpi}\Gamma\left(\frac{h^m}{w_{\varpi}},\frac{h^m}{w_{\varpi}}\right)(s)ds.
\end{eqnarray*}
We take $L^{\infty}$ norm on both sides and apply Theorem \ref{LInftyDecayTheorem} to obtain
\begin{eqnarray}\label{LIntfyEstimate}
\|h^{m+1}\|_{\infty}\leq Ce^{-\lambda t}\|h_0\|_{\infty}+\left\|\int^t_0U(t-s)w\Gamma\left(\frac{h^m}{w},\frac{h^m}{w}\right)(s)ds\right\|_{\infty}.
\end{eqnarray}
We observe
\begin{eqnarray*}
U(t-s)=G(t-s)+\int^t_s G(t-s_1)K^{\varpi,x}_{w_{\varpi}}U(s_1-s)ds_1
\end{eqnarray*}
to get
\begin{eqnarray}\label{GKGK for Gamma}
\begin{split}
&\int^t_0U(t-s)w_{\varpi}\Gamma\left(\frac{h^m}{w_{\varpi}},\frac{h^m}{w_{\varpi}}\right)(s)ds\cr
&\hspace{1cm}=\int^t_0G(t-s)w_{\varpi}\Gamma\left(\frac{h^m}{w_{\varpi}},\frac{h^m}{w_{\varpi}}\right)(s)ds\cr
&\hspace{1cm}+\int^t_0\int^t_s G(t-s)K^{\varpi,x}_{w_{\varpi}}U(s_1-s)w_{\varpi}\Gamma\left(\frac{h^m}{w_{\varpi}},\frac{h^m}{w_{\varpi}}\right) ds_1ds\cr
&\hspace{1cm}=I_1+I_2.
\end{split}
\end{eqnarray}
We can estimate $I_1$ using Lemma \ref{Gamma Estimate} as follows
\begin{eqnarray*}
\begin{split}
\left|\int^t_0G(t-s)w_{\varpi}\Gamma\left(\frac{h^m}{w_{\varpi}},\frac{h^m}{w_{\varpi}}\right)(s)ds\right|
&=\left|\int^t_0e^{-\mathcal{V}(t-s,x,v)}\left\{w\Gamma\left(\frac{h^m}{w_{\varpi}},\frac{h^m}{w_{\varpi}} \right)\right\}\big(s,X(s),V(s)\big)ds\right|\cr
&\leq C\int^t_0e^{-\nu_0(t-s)}\nu(v)\|h^m(s)\|^2_{\infty}ds\cr
&\leq Ce^{-\frac{\nu_0}{2}t}\left\{\sup_{0\leq s\leq \infty}e^{\frac{\nu_0}{2}s}\|h^m(s)\|_{\infty}\right\}^2.
\end{split}
\end{eqnarray*}
On the other hand, for any given initial datum $\tilde{h}_0$, We define the semigroup $\tilde{U}(t)\tilde{h}_0$ as the solution operator solving the following initial
boundary value problem:
\begin{eqnarray*}
&&\left\{\partial_t+v\cdot\nabla_x+\nu^{\varpi}-K^{\varpi,x}_{w_{\varpi}/\sqrt{1+|v-\varpi\times x|^2}}\right\}\{\tilde{U}(t)\tilde{h}_0\}=0,\cr
&&\hspace{1cm}\tilde{U}(t)\tilde{h}_0(t,x,v)=\tilde{U}(t)\tilde{h}_0(t,x,R(x)v),\cr
&&\hspace{1cm}\tilde{U}(t)\tilde{h}_0=\tilde{h}_0,
\end{eqnarray*}
where $\tilde h_0=\frac{h_0}{\sqrt{1+|v-\varpi\times x|^2}}$.
Direct computation shows that $\sqrt{1+|v-\varpi\times x|^2}\tilde U(t)\tilde{h}_0$ solves the original linear Boltzmann equation (\ref{LinearBE}).
Therefore, we deduce from the uniqueness in the $L^{\infty}$ class:
\begin{eqnarray*}
U(t)h_0\equiv\sqrt{1+|v-\varpi\times x|^2}\tilde{U}(t)\left\{\frac{h_0}{\sqrt{1+|v-\varpi\times x|^2}}\right\}.
\end{eqnarray*}
Therefore, we can rewrite $K^{\varpi,x}_{w_{\varpi}}U(s_1,s)w\Gamma\left(\frac{h^m}{w},\frac{h^m}{w}\right)(s)$ to get
\begin{eqnarray*}\label{GKGK for Gamma3}
&&\hspace{-0.2cm}\int^t_0\int^t_s G(t-s)K^{\varpi,x}_{w_{\varpi}}U(s_1-s)w_{\varpi}\Gamma\left(\frac{h^m}{w_{\varpi}},\frac{h^m}{w_{\varpi}}\right) ds_1ds\cr
&&=\int^t_0\int^t_se^{-\mathcal{V}(t-s_1,x,v)}\left\{\int K^{\varpi,X_{\bf cl}(s_1)}_{w_{\varpi}}
(V_{\bf cl}(s_1),v^{\prime})\{\sqrt{1+|v^{\prime}-\varpi\times X_{\bf cl}(s_1)|^2}\}dv^{\prime}\right\}\cr
&&\quad\times\left\|\tilde{U}(t)(s_1-s)\left\{\frac{w_{\varpi}}{\sqrt{1+|v^{\prime}-\varpi\times x|^2}}\Gamma\left(\frac{h^m}{w_{\varpi}},\frac{h^m}{w_{\varpi}}\right)(s)\right\}\right\|_{\infty}ds_1ds.
\end{eqnarray*}
Since $w^{-2}_{\varpi}(1+|v-\varpi\times x|)^3\in L^1$, the new weight
$\left\{\frac{w_{\varpi}}{\sqrt{1+|v-\varpi\times x|^2}}\right\}^{-2}(1+|v-\varpi\times x|)\in L^1$ so that Theorem \ref{LInftyDecayTheorem}
is valid for $\tilde{U}$. Since $\frac{\nu(v^{\prime})}{\sqrt{1+|v^{\prime}|^2}}\leq C_{\rho}$, from the proof of Lemma \ref{K_wEstimate},
\begin{eqnarray*}
\int_{\mathbb{R}^3}K^{\varpi,X_{\bf cl}(s_1)}_{w_{\varpi}}
(V_{\bf cl}(s_1),v^{\prime})\{\sqrt{1+|v^{\prime}-\varpi\times X_{\bf cl}(s_1)|^2}\}dv^{\prime}<+\infty.
\end{eqnarray*}
Hence, the integral can be estimated as follows
\begin{eqnarray*}\label{GKGK for Gamma3}
&&\hspace{-1cm}\int^t_0\int^t_se^{-\mathcal{V}(t-s_1,x,v)}
\left\|\tilde{U}(t)(s_1-s)\left\{\frac{w_{\varpi}}{\sqrt{1+|v^{\prime}-\varpi\times x|^2}}\Gamma\left(\frac{h^m}{w_{\varpi}},\frac{h^m}{w_{\varpi}}\right)(s)\right\}\right\|_{\infty}ds_1ds\cr
&&\leq C\int^t_0\int^t_se^{-\nu_0(t-s)}\|h^m(s)\|^2_{\infty}ds_1ds\cr
&&\leq Ce^{-\frac{\nu_0}{2}t}\left\{\sup_{0\leq s\leq\infty}e^{\frac{\nu_0}{2}s}\|h^m(s)\|\right\}^2.
\end{eqnarray*}
This implies that
\begin{eqnarray*}
\sup_{0\leq t\leq \infty}\{e^{\frac{\lambda}{2}s}\|h^m(s)\|_{\infty}\}\leq C\|h_0\|_{\infty},
\end{eqnarray*}
for $\|h_0\|_{\infty}$ sufficiently small.
Moreover, substracting $h^{m+1}-h^m$ yields
\begin{eqnarray}\label{NBE_Iteration}
\begin{split}
&\{\partial_t+v\cdot \nabla_x+\nu^{\varpi}-K^{\varpi,x}_{w_{\varpi}}\}\{h^{m+1}-h^m\}\cr
&\hspace{2cm}=w_{\varpi}\Gamma^{\varpi}\left(\frac{h^m}{w_{\varpi}},\frac{h^m}{w_{\varpi}}\right)-w_{\varpi}\Gamma^{\varpi}
\left(\frac{h^{m-1}}{w_{\varpi}},\frac{h^{m-1}}{w_{\varpi}}\right)\cr
%&\hspace{2cm}w\Gamma\left(\frac{h^{m}}{w_{\varpi}},\frac{h^{m}}{w_{\varpi}}\right)-w_{\varpi}\Gamma^{\varpi}\left(\frac{h^{m-1}}{w},\frac{h^{m-1}}{w_{\varpi}}\right)\cr
&\hspace{2cm}=w_{\varpi}\Gamma^{\varpi}\left(\frac{h^m-h^{m-1}}{w_{\varpi}},\frac{h^{m-1}}{w_{\varpi}}\right)
-w_{\varpi}\Gamma^{\varpi}\left(\frac{h^{m-1}}{w_{\varpi}},\frac{h^{m-1}-h^{m}}{w_{\varpi}}\right).
\end{split}
\end{eqnarray}
We can bound $\|h^{m+1}-h^m\|_{\infty}$ as in the previous estimate to obtain:
\begin{eqnarray*}
&&C\left\|\int^t_0U_0(t-s)w_{\varpi}\Gamma^{\varpi}\left(\frac{h^m-h^{m-1}}{w_{\varpi}},\frac{h^{m}}{w_{\varpi}}\right)(s)ds\right\|_{\infty}\cr
&&\quad +C\left\|\int^t_0U_0(t-s)w_{\varpi}\Gamma^{\varpi}\left(\frac{h^{m-1}}{w_{\varpi}},\frac{h^{m-1}-h^m}{w_{\varpi}}\right)(s)ds\right\|_{\infty}\cr
&&\leq C\sup_s\{e^{\lambda s}\{\|h^m(s)\|_{\infty}\}+\|h^{m-1}\|_{\infty}\}e^{-\lambda t}\sup_s
\{e^{\lambda s}\left\{\|h^m(s)-h^{m-1}(s)\|_{\infty}\right\}\}.
\end{eqnarray*}
Hence $h^m$ is a Cauchy sequence and the limit $h$ is the desired unique solution.\newline
The continuity, positivity and the uniqueness of the solution can be derived by the similar argument as in \cite{GuoConti}.
Therefore we omit it.
\appendix
%%%%%%%%%%%%%%%%%%%%%%%%%%%%%%%%%%%%%%%%%%%%%%%%%%%%%%%%%%%%%%%%%%%%%%%%%%%%%%%%%%%%%%%%%%%%%%%%%%%%%%%%%%%%%%%%
% %%%%%%%%%%%%%%%%%%%%%%%%%%%%%%%%%%%%%%%%%%%%%%%%%%%%%%%%%%%%%%%%%%%%%%%%%%%%%%%%%%%%%%%%%%%%%%%%%%%%%%%%%%%%%%
% %
% %
% %                     Rotational Local Maxwellian
% %
% %
% %%%%%%%%%%%%%%%%%%%%%%%%%%%%%%%%%%%%%%%%%%%%%%%%%%%%%%%%%%%%%%%%%%%%%%%%%%%%%%%%%%%%%%%%%%%%%%%%%%%%%%%%%%%%%%
%%%%%%%%%%%%%%%%%%%%%%%%%%%%%%%%%%%%%%%%%%%%%%%%%%%%%%%%%%%%%%%%%%%%%%%%%%%%%%%%%%%%%%%%%%%%%%%%%%%%%%%%%%%%%%%%
\section{Proof of Theorem \ref{MainResult} (1)}
In this appendix, we consider the derivation of the rotational local Maxwellian.
This proof basically relies on the argument of \cite{GuoConti}.
Suppose the following general form of local Maxwellian
\begin{eqnarray}\label{gMax}
\mu(x,v,t)=e^{a(x,t)+b(x,t)\cdot v+c(x,t)|v|^2}
\end{eqnarray}
satisfies the Boltzmann equation (\ref{Boltzmann Equation}).
Substituting (\ref{gMax}) into (\ref{Boltzmann Equation}) and using $Q(\mu, \mu)=0$, we obtain
\begin{eqnarray*}
\partial_t\mu+v\cdot \nabla\mu=0.
\end{eqnarray*}
This leads to the following system of equations for $a$, $b$ and $c$ for $(t,x)\in [0,\infty)\times \Omega$:
\begin{eqnarray*}\label{112}
\begin{split}
\partial_{x_i}c&=0,\quad i=1,2,3,\cr
\partial_{t}c+\partial_{x_i}b^i&=0,\quad i=1,2,3,\cr
\partial_{x_j}b^i+\partial_{x_i}b^j&=0,\quad i\neq j,\cr
\partial_{t}b^i+\partial_{x_i}a&=0,\quad i=1,2,3,\cr
\partial_{t}a&=0.
\end{split}
\end{eqnarray*}
This system gives a general form of $a,b,c$ ( Lemma 6 in \cite{GuoConti}):
\begin{eqnarray}\label{Max Explicit1}
&&\mu(t,x,v)=e^{(\frac{c_0}{2}|x|^2-b_0\cdot x+a_0)+(-c_0tx-c_1x+\varpi\times x+b_0t+b_1)\cdot v
+(\frac{c_0t^2}{2}+c_1t+c_2)|v|^2}
\end{eqnarray}
for some constants $a_0, b_0, b_1, c_0, c_1, c_2>0$.
From the specular boundary condition:
\begin{eqnarray*}
\mu(t,v,x)=\mu(t,x,R(x)v),
\end{eqnarray*}
we have for all $x\in \partial\Omega$ and $t\in \mathbb{R}^+$
\begin{eqnarray}\label{n=0}
\{-c_0tx-c_1x+\varpi\times x+b_0t+b_1\}\cdot n(x)=0,
\end{eqnarray}
which implies
\begin{eqnarray*}
c_0=c_1=b_0=0.
\end{eqnarray*}
Hence (\ref{n=0}) reduces to
%\begin{eqnarray}\label{1088}\varpi\times x\cdot n(x)\equiv0.\end{eqnarray}
\begin{eqnarray}\label{1088}
\{\varpi\times x+b_1\}\cdot n(x)\equiv0.
\end{eqnarray}
%Hence $b_0=0$ for all $x\in \partial\Omega$ and\begin{eqnarray}\label{108}\{\omega\times x\}\cdot n(x)+b_1\cdot n(x)=0.\end{eqnarray}
We now decompose $b_1$ as
\begin{eqnarray*}
b_1=\beta_1\frac{\varpi}{|\varpi|}+\beta_2\eta, \mbox{ where } |\eta|=1 \mbox{ and } \eta\perp\varpi.
\end{eqnarray*}
Then $\eta=\{\frac{\varpi}{|\varpi|}\times\eta\}\times\frac{\varpi}{|\varpi|}$. Therefore, we get
\begin{eqnarray*}
\begin{split}
b_1%&=\beta_1\frac{\varpi}{|\varpi|}+\beta_2\eta\cr
&=\beta_1\frac{\varpi}{|\varpi|}
+\beta_2\left\{\frac{\varpi}{|\varpi|}\times\eta\right\}\times\varpi\cr
&\equiv\beta_1\frac{\varpi}{|\varpi|}-x_0\times\varpi,
\end{split}
\end{eqnarray*}
where $x_0=-\beta_2\frac{\varpi}{|\varpi|}\times \eta$. By plugging $b_1$ back into (\ref{1088}), we obtain
\begin{eqnarray}\label{1324}
\beta_1\frac{\varpi}{|\varpi|}\cdot n(x)+\varpi\times (x-x_0)\cdot n(x)=0.
\end{eqnarray}
We now choose $x^{\prime}$ such that $\xi(x^{\prime})=\min_{\xi(x)=0}\varpi\cdot x$.
Then the Lagrangi Multiplier theorem implies there exists a constant $\lambda$ such that $\varpi=\lambda n(x^{\prime})$.
Setting $x=x^{\prime}$ in (\ref{1324}), we obtain $\beta_1=0$. %$\varpi\times(x^{\prime}-x_0)\cdot n(x^{\prime})=0$ and hence .
Therefore, we have from (\ref{1324})
%\begin{eqnarray*}\mu=e^{a_0+\{\varpi\times (x-x_0)\}\cdot v+c_2|v|^2}.\end{eqnarray*}
\begin{eqnarray*}\label{n=0_2}
\varpi\times (x-x_0)\cdot n(x)=0.
\end{eqnarray*}
%{\bf Case I:} $\Omega$ is the sphere.\newline
%Let $r$ denotes the radius of $\Omega$. Then we have $n(re_i)= e_i ~(i=1,2,3)$, where $r$ denotes the radius of $\Omega$.
%\begin{eqnarray*}
%&&\varpi\times (e_i-x_0)\cdot e_i=\varpi\times x_0\cdot e_i=0,
%\end{eqnarray*}
%which shows that $\varpi\times x_0=0$.\newline
We now claim that $\varpi\times x_0=0$. We first note that (\ref{cylindricallySymm}) leads to
%\begin{eqnarray}\label{117}e_3\times x\cdot n(x)=0,\end{eqnarray}
\begin{eqnarray}\label{paral1}
x_1n_2-x_2n_1=0,
\end{eqnarray}
%\begin{eqnarray}&&\varpi\times (x-x_0)\cdot n(x)=0\cr
%&&(\varpi_1,\varpi_2,\varpi_3,)\times (x-x_0)\cdot n(x)=0\cr
%&&(\varpi_1,\varpi_2,0)\times (x-x_0)\cdot n(x)+(0,0,\varpi_3)\times (x-x_0)\cdot n(x)=0\cr
%&&(\varpi_1,\varpi_2,0)\times (x-x_0)\cdot n(x)+(0,0,\varpi_3)\times (-x_0)\cdot n(x)=0\cr
%&&(\varpi_1,\varpi_2,0)\times x\cdot n(x)-(\varpi_1,\varpi_2,\varpi_3)\times x_0\cdot n(x)=0\cr\end{eqnarray}by
%\begin{eqnarray}&&x_1n_2-x_2n_1=e_3\times x\cdot n(x)=0.\cr&&(x_1,x_2)//(n_1(x),n_2(x))\cr&&n(\lambda_1e_3)= e_3,n(\lambda_2e_3)=-e_3.\cr
%&&0=\varpi\times (\lambda_1e_3-x_0)\cdot e_3=\varpi\times x_0\cdot e_3=0.\cr&&(\varpi_1,\varpi_2)//(x_{01},x_{02})\cr
%&&n(-x_1,-x_2,x_3)=n(x_1,x_2,x_3)-2\frac{(x_1,x_2,0)}{\sqrt{x_1^2+x_2^2}}\cdot n(x_1,x_2,x_3)\frac{(x_1,x_2,0)}{\sqrt{x_1^2+x_2^2}}\cr
%&&(n_1,n_2,n_3)-2\frac{n_1x_1+n_2x_2}{x_1^2+x_2^2}(x_1,x_2,0)\cr&&\left(\frac{(-x_1^2+x^2_2)n_1-2x_1x_2n_2}{x_1^2+x^2_2},\frac{-2x_1x_2n_2+(x_1^2-x^2_2)n_1}{x_1^2+x^2_2},0 \right)
%\end{eqnarray}
which implies
\begin{eqnarray*}\label{paral2}
(x_1,x_2)//(n_1,n_2).
\end{eqnarray*}
Therefore, we can choose $\hat z$, $a$ and $\bar z$ after translation in $z$ axis, if necessary such that
\begin{eqnarray}\label{e_1e_2e_3}
&&n\big((0,0,\hat z)\big)=(0,0,1),\quad n\big((a,0,0)\big)=(1,0,0),\quad n\big((0,b,0)\big)=(0,1,0).
\end{eqnarray}
%By translating $\Omega$ along the $e_3$, we can assume without loss of generality that $\bar z=0$.
Plugging  (\ref{e_1e_2e_3}) into (\ref{n=0_2}), we have
%\begin{eqnarray}\varpi_1y_0-\varpi_2x_0=0.\end{eqnarray}
\begin{eqnarray*}\label{paral2}
\begin{split}
&\varpi_1x_{02}-\varpi_2x_{01}=0,\cr
&\varpi_2x_{03}-\varpi_3x_{02}=0,\cr
&\varpi_1x_{03}-\varpi_3 x_{01}=0,
\end{split}
\end{eqnarray*}
which is equivalent to
\begin{eqnarray*}
\varpi\times x_0=0.
\end{eqnarray*}
We therefore have
\begin{eqnarray*}
0=\varpi\times\big(x-x_0\big)\cdot n(x)=\varpi\times x\cdot n(x).
\end{eqnarray*}
%Now from the specular reflection, we have for any $x\in \partial\Omega, b(t,x)\cdot n(x)=0$ or
Now, to derive additional information on $\omega$, we perform explicit calculation using (\ref{paral1}) to see
\begin{eqnarray*}
\varpi\times x\cdot n(x)%&=&\left|\begin{array}{ccc}\varpi_1&\varpi_2&\varpi_3\\x_1&x_2&x_3\\n_1&n_2&n_3\end{array}\right|\cr
&=&\varpi_1(x_2n_3-x_3n_2)-\varpi_2(x_1n_3-x_3n_1)+\varpi_3(\underbrace{x_1n_2-x_2n_1}_{=0})\cr
&=&1/x_1(n_1x_3-n_3x_1)(\varpi_1x_2-\varpi_2x_1).
\end{eqnarray*}
Hence we have either $\varpi_1x_2-\varpi_2x_1=0$ or $n_1x_3-n_3x_1=0$.\newline
%If $\varpi_1x_2-\varpi_2x_1=0$, then we have $\varpi_1=\varpi_2=0.$\newline
\noindent (i) The case of $\varpi_1x_2-\varpi_2x_1=0$: The only way this identity to be true is
\[\varpi_1=\varpi_2=0,\]
which leads to
\begin{eqnarray*}
\mu(x,v)=e^{c_0|v|^2+(0,0,\varpi_3)\times x\cdot v+a_0}.
\end{eqnarray*}
\noindent (ii) The case of $n_1x_3-n_3x_1=0$:  Together with (\ref{paral1}), we have for $x_1\neq 0$
\begin{eqnarray}\label{n1}
n(x)=\frac{n_1}{x_1}(x_1,x_2,x_3).
\end{eqnarray}
Hence we have
\begin{equation*}
1=|n(x)|=\frac{n^2_1}{x^2_1}(x^2_1+x^2_2+x^2_3).
\end{equation*}
This gives
\begin{equation}\label{putit}
n_1=\pm\frac{x_1}{\sqrt{x^2_1+x^2_2+x^2_3}}.
\end{equation}
We then put (\ref{putit}) back to (\ref{n1}) to get
\begin{equation*}
n(x)=\pm\frac{(x_1,x_2,x_3)}{\sqrt{x^2_1+x^2_2+x^2_3}}=\pm\frac{x}{|x|},
\end{equation*}
%We choose th
%\begin{equation}
%n(x)=\frac{x}{|x|}.
%\end{equation}
which implies that $\Omega$ is a sphere.\newline
In conclusion,
a Maxwellian solution (\ref{Max Explicit1}) to the Boltzmann equation (\ref{Boltzmann Equation}) in a rotationally symmetric domain endowed
with the specular reflection boundary condition reduces to the following form:
\begin{eqnarray*}
\mu(x,v)=e^{c_0|v|^2+\varpi\times x\cdot v+a_0},
\end{eqnarray*}
where $\omega_1=\omega_2=0$ except for the case of $\Omega=\mathbb{S}^3$.
%Without loss of generality, we normalize it for simplicity as:
%\begin{eqnarray*}
%\mu_{\varpi}(x,v)=e^{-\frac{|v|^2}{2}+\varpi\times x\cdot v}.
%\end{eqnarray*}
%where $\varpi=(0,0,\varpi)$ for $\mathcal{(A)}1$ and $\varpi=(\varpi_1,\varpi_2,\varpi_3)$ for $\mathcal{(A)}2$.
\section*{Acknowledgement}
Authors would like to thank Professor Yan Guo for fruitful discussions.
The research of C, Kim was supported by FRG 07-57227.
The research of S.-B. Yun was supported by the National Research Foundation of Korean Grant funded by the Korean Government (Ministry of Education,
Science and Technology). [NRF-2010-357-C00005].
\bibliographystyle{amsplain}

\end{document}